\documentclass[11pt]{article}
\usepackage[utf8]{inputenc}
\usepackage{fullpage}

\usepackage{amsthm,mathtools,scalerel}
\usepackage{color,soul,latexsym,amsmath,amssymb,amsfonts,amsthm,dsfont,enumitem,xcolor,bbm,threeparttable,graphicx,float,subcaption}
\usepackage{authblk}
\usepackage{natbib}
\PassOptionsToPackage{hyphens}{url}
\RequirePackage[colorlinks=true,allcolors=blue,hypertexnames=false]{hyperref}%
\usepackage[symbol]{footmisc}
\usepackage{graphicx,todonotes}
\usepackage{xcolor,stmaryrd}
\usepackage{enumitem}
\usepackage{booktabs}
\usepackage{wrapfig}
\definecolor{Green}{HTML}{069937}
\newif \ifnotes

\usepackage[most]{tcolorbox}
\colorlet{shadecolor}{gray!15}
\usepackage{outlines}
\usepackage{array}
\usepackage{selectp}
\usepackage{cleveref}
\usepackage[ruled,linesnumbered, vlined, noend]{algorithm2e}
\usepackage{titlesec}
\usepackage{mathrsfs}

\newtheorem{theorem}{Theorem}%[section]
\newcommand{\Ll}{\mathscr{L}}
\newtheorem{lemma}{Lemma}
\newtheorem{corollary}{Corollary}
\newtheorem{proposition}{Proposition}

\newcommand{\E}{\mathbb{E}}

\renewcommand{\hat}{\widehat}
\let\tilde\widetilde

\newcommand{\for}{\overrightarrow{T}}
\newcommand{\back}{\overleftarrow{T}}

\newcommand{\RR}{\mathbb{R}}

\newcommand{\B}{\mathcal{B}}

\newcommand{\KL}{\mathsf{KL}}
\newcommand{\ud}{\mathrm{d}}

\newcommand\Var{\mathrm{Var}}

\newcommand{\Pb}{\mathbb{P}}

\iftrue  % \iftrue or \iffalse
  \newcommand{\gm}[1]{[\textcolor{red}{GM: #1}] }
  \newcommand{\tg}[1]{[\textcolor{blue}{TG: #1}] }
\else
  \newcommand{\gm}[1]{}
  \newcommand{\tg}[1]{}
\fi

\allowdisplaybreaks
\usepackage{tikz}
\usetikzlibrary{positioning,shapes.geometric,calc,shapes,arrows.meta,arrows,decorations.markings, external, trees}
\tikzstyle{Arrow} = [thick,decoration={markings,mark=at position 1 with {\arrow[thick]{latex}}},shorten >= 3pt, preaction = {decorate}]

\title{\bf Statistical Rates for Entropic Optimal Transport in the Discrete to SubGaussian Regime}
\author[1]{Tomas Gonzalez}
\author[2]{Gonzalo Mena}
\affil[1]{Department of Machine Learning, Carnegie Mellon University}
\affil[2]{Department of Statistics \& Data Science, Carnegie Mellon University}
\affil[ ]{\texttt{\{tcgonzal, gmena\}@andrew.cmu.edu}}

\date{}                   
\begin{document}

\maketitle

\begin{abstract}
We study statistical rates in entropic optimal transport in the semi-discrete regime where one measure has finite support and the other is subGaussian. Our main result establishes parametric convergence rates for the empirical dual potentials to their population counterparts, with no dimension dependence in the leading term. Our result relies on tailored strong concavity analysis of the semi-dual objective, coupled with specialized bounds for the semi-discrete potentials. As a consequence, we obtain fast rates for downstream quantities derived from the optimal coupling. Chiefly,  the empirical barycentric projection achieves a squared-error rate $n^{-1}$, matching the fully compact case and improving over the less favorable $n^{-1/2}$ rate known for fully subGaussian settings. Altogether, these results may indicate a lower complexity adaptation phenomenon whereby the statistical complexity of the barycentric projection is governed by the discrete measure. As an application, we analyze Sinkhorn-EM, an EM-type algorithm in which the E-step is replaced by an entropic optimal transport problem. In a well-specified and balanced two-component Gaussian mixture model, we prove $\sqrt{n}$-consistency of the empirical iterates to their population counterparts for any fixed number of iterations, matching classical EM rates up to a $\sqrt{\log n}$ factor. Simulations support the theory.
\end{abstract}

\section{Introduction}

Optimal transport (OT) provides a geometrically meaningful way to compare probability distributions. Given measures $P$ and $Q$ and a cost function, the problem is to find the coupling---a joint distribution with marginals $P$ and $Q$---that minimizes the expected transport cost. The optimal coupling describes how mass is matched, while, under suitable conditions, the Brenier map provides a deterministic way to transport $P$ onto $Q$. The optimal transport cost, coupling, and map have become central tools across statistics and machine learning, with applications ranging from generative modeling and domain adaptation to single-cell genomics, shape analysis, and distributional robustness \citep{peyre2019computational,villani2009optimal}. In most applications, however, at least one of the underlying distributions is typically unknown and must be inferred from finite samples.

Statistical optimal transport studies the accuracy with which transport costs, couplings, and maps can be estimated from such samples. For classical OT, these estimation problems generally suffer from the curse of dimensionality: convergence rates deteriorate rapidly with the ambient dimension $d$, and computationally efficient estimators with sharp guarantees are scarce \citep{hutter2021minimax}. Entropic optimal transport (EOT) is an attractive alternative that preserves the geometric structure of OT while adding a relative-entropy penalty to the transport objective. As the strength of this regularization vanishes, EOT recovers classical OT under suitable conditions. For positive regularization, it produces a smooth optimal coupling, can be computed efficiently using Sinkhorn's algorithm, and admits estimators with substantially more favorable sample complexity \citep{cuturi2013sinkhorn,peyre2019computational}. Its barycentric projections provide entropic analogues of the Brenier map and, together with the optimal coupling and dual potentials, are important statistical objects in their own right.

Existing finite-sample guarantees for estimating these objects depend strongly on the support assumptions imposed on the two measures. When both measures are compactly supported, Rigollet and Stromme \citep{rigollet2025sample} establish parametric squared-error rates of order $n^{-1}$ for both the empirical coupling density and its barycentric projections, where $n$ denotes the sample size. To the best of our knowledge, an $n^{-1}$ expected squared-error bound for the empirical coupling density has not previously been established in the semi-discrete-to-subGaussian setting considered here. The literature on barycentric projections extends further: a parametric rate is known in the compact semi-discrete setting \citep{pooladian2023minimax}, while Masud et al.\ \citep{masud2023multivariate} obtain an $n^{-1/2}$ squared-error rate when the source measure is subGaussian and the target measure is compactly supported. When both measures may be subGaussian and unbounded, Werenski et al.\ \citep{werenski2023estimation} establish slower rates. This leaves open a natural intermediate question:
\begin{center}
\textit{Can parametric rates for the coupling density and barycentric projections be recovered when one measure has finite support while the other is subGaussian and unbounded?}
\end{center}

We answer this question affirmatively. Our central result establishes dimension-independent convergence rates for the empirical dual potentials in the discrete-to-subGaussian regime. This potential-level result yields a parametric squared-error rate for the density of the empirical optimal coupling and, in turn, parametric convergence rates for both barycentric projections. Our analysis exploits the discrete marginal, which allows the corresponding dual potential to be represented by a finite-dimensional vector. Our proof strategy is closest to that of Pooladian et al.\ \citep{pooladian2023minimax}, combining strong concavity of the semi-dual objective with empirical-process arguments. A key ingredient in this approach is uniform control of the dual potentials, for which existing bounds do not extend suitably to an unbounded subGaussian marginal. 

Beyond its fundamental importance in the theory of statistical EOT, the discrete-to-subGaussian regime arises naturally in the analysis of the Sinkhorn expectation-maximization (Sinkhorn-EM) algorithm \citep{mena2020sinkhorn,mena2026eotclustering} and is closely related to the setup studied in the classical work on deterministic annealing for clustering \citep{rose1998deterministic}. For Gaussian mixture models, Sinkhorn-EM replaces the usual likelihood-based update with an EOT problem between the discrete measure supported on the current mixture centers and the continuous data distribution. While previous work has demonstrated its practical relevance and studied its population behavior, its finite-sample properties remain poorly understood \citep{mena2020sinkhorn,mena2026eotclustering}. Our EOT results yield the first guarantee on how accurately sample-based Sinkhorn-EM iterates approximate their population counterparts, providing a first step toward a broader theoretical understanding of the algorithm.

\paragraph{Contributions.}
We make the following contributions. 

\begin{itemize}
    \item In \Cref{teo:potentialbound}, we establish $n^{-1}$ expected squared-error rates for the empirical entropic dual potentials in the discrete-to-subGaussian regime, with dimension-independent leading constants and an exponentially small remainder.

    \item We prove an $n^{-1}$ squared-error rate for the empirical optimal coupling density (\Cref{theo:densitibound}), as well as rates of order $n^{-1}$ and $d/n$ for the backward and forward barycentric projections, respectively (\Cref{theo:barybound}). A matching minimax lower bound shows that the $n^{-1}$ rate for the backward projection is optimal (\Cref{prop:minimax_lower_bound}).

    \item We establish a strong-concavity bound for the semi-dual objective and a dimension-independent uniform bound for the discrete-side dual potential (\Cref{prop:strong,prop:potentialbound}), which are key to obtain the results mentioned above.

    \item As an application, we give the first finite-sample guarantee for Sinkhorn-EM in a balanced, symmetric two-component Gaussian mixture, proving $n^{-1/2}$-consistency of the empirical iterates with their population counterparts for any fixed number of iterations (\Cref{thm:convergence-sample-SEM}).

    \item We corroborate the predicted convergence rates through simulations in \Cref{sec:simulations}.
\end{itemize}

\paragraph{Related work.}
The statistical theory of optimal transport is extensive; we refer to
\cite{chewi2025statistical} for a comprehensive treatment. In the unregularized
setting, dimension-dependent rates are known for both Wasserstein distances and
transport maps. In particular, H\"utter and Rigollet
\citep{hutter2021minimax} study estimation of the unregularized Brenier map
between absolutely continuous measures under regularity assumptions on the
transport map and smoothness assumptions on the densities. Under sufficient
smoothness, their squared-$L^2$ risk can attain an $n^{-1/2}$ rate whose
exponent is independent of the ambient dimension. This is a different
statistical problem from the one considered here: we study fixed entropic
regularization with a finitely supported marginal and an unbounded subGaussian
marginal, and obtain guarantees for the dual potentials, coupling density, and
barycentric projections without smoothness assumptions on the densities.

Statistical analyses of entropic optimal transport, popularized by Cuturi
\citep{cuturi2013sinkhorn}, initially focused primarily on transport costs,
establishing sample-complexity bounds and central limit theorems under
compact-support and subGaussian assumptions
\citep{genevay2019sample,mena2019statistical}. Statistical guarantees for finer
EOT objects are more limited. When both marginals are compactly supported,
Rigollet and Stromme \citep{rigollet2025sample} establish parametric
squared-error rates of order $n^{-1}$ for the empirical dual potentials,
coupling density, and barycentric projections. In the compact semi-discrete
setting, Pooladian et al.\ \citep{pooladian2023minimax} obtain an $n^{-1}$ rate
for the empirical EOT barycentric projection at fixed regularization, as an
intermediate step toward estimating the unregularized Brenier map. When the
source is subGaussian and the target is compactly supported, Masud et
al.\ \citep{masud2023multivariate} obtain an $n^{-1/2}$ squared-error rate.
More general noncompact settings are considered by Werenski et
al.\ \citep{werenski2023estimation}, who obtain slower rates under compactness
or strong log-concavity assumptions on the target.
Table~\ref{tab:eot-map-comparison} summarizes the available finite-sample rates
for estimating the EOT barycentric projection at fixed regularization and
situates our result within this literature. To the best of our knowledge, an $n^{-1}$ expected squared-error bound for the (canonically extended) empirical coupling density has not previously been established in this semi-discrete-to-subGaussian regime.

\begin{table}[t]
    \centering
    \small
    \begin{tabular}{@{}p{0.25\linewidth}p{0.25\linewidth}
                         p{0.30\linewidth}p{0.12\linewidth}@{}}
        \toprule
        Work & Source Measure & Target Measure & Squared-$L^2$ rate \\
        \midrule
        Ours
        & SubGaussian
        & Finite support, with masses bounded away from zero
        & $n^{-1}$ \\

        Ours (lower bound)
        & SubGaussian
        & Two atoms, each with mass at least $1/4$
        & $\Omega(n^{-1})$ \\

        Pooladian et al.\ \citep{pooladian2023minimax}
        & Compact, with density bounded above and below
        & Finite support, with masses bounded away from zero
        & $n^{-1}$ \\

        Masud et al.\ \citep{masud2023multivariate}
        & SubGaussian
        & Compact support
        & $n^{-1/2}$ \\

        Werenski et al.\ \citep{werenski2023estimation}
        & SubGaussian
        & Compact support or strongly log-concave
        & $n^{-1/3}$ \\

        Rigollet and Stromme \citep{rigollet2025sample}
        & Compact support
        & Compact support
        & $n^{-1}$ \\
        \bottomrule
    \end{tabular}
    \caption{Finite-sample rates for estimating the EOT barycentric projection
    at fixed regularization. We report only the dependence on the sample size
    $n$, treating the regularization parameter and other problem parameters as
    fixed. In our notation, the map goes from $Q$ to $P$. The lower bound is a minimax bound; all other rows report upper
    bounds.}
    \label{tab:eot-map-comparison}
\end{table}

Lower-complexity adaptation has been studied for transport costs in both the
unregularized and entropic settings. For classical OT costs, Hundrieser et
al.\ \citep{hundrieser2024empirical} prove that empirical rates can adapt to the
lower-complexity marginal. Groppe and Hundrieser
\citep{groppe2024lower} establish analogous adaptation results for EOT costs.
These results concern transport costs, whereas our semi-discrete analysis
concerns finer EOT objects, namely the dual potentials, coupling density, and
barycentric projections.

Finally, our Sinkhorn-EM application connects the paper to the literature on
expectation-maximization (EM) algorithms for mixture models. Finite-sample
analyses of EM for Gaussian mixtures are well developed but technically
delicate, relying on careful control of sample-based EM updates and their
deviations from population iterates
\citep{xu2016globalEM,wu2021randomly,BalWaiYu17,daskalakis2017ten,
weinberger2022algorithm,Dwivedi2018}. Sinkhorn-EM replaces the likelihood-based
EM update with an EOT-based update and has been studied primarily at the
population level
\citep{nejatbakhsh2020probabilistic,mena2020sinkhorn,
mena2026eotclustering}. We provide its first finite-sample guarantee in a
balanced, symmetric two-component Gaussian mixture. For any fixed number of
iterations, the sample-based iterates approximate their population counterparts
at an $n^{-1/2}$ rate, matching the corresponding sample-based EM guarantee up
to logarithmic factors and problem-dependent constants.

\section{Problem setup and notation}\label{sec:notation}

The entropic optimal transport problem between measures $P$ (target) and $Q$ (source) is written as  \cite{peyre2019computational,chewi2025statistical}
\begin{equation}\label{eqn:entropic_OT_primal}
S(P, Q) := \inf_{\pi \in \Pi(P,Q)}\Big\{ \int\int c(x,y)\ud \pi(x,y) + \sigma^2 \mathrm{KL}(\pi\| P \otimes Q)\Big\}\,,
\end{equation} 
where $c(x,y)=||x-y||^2/2$ is the quadratic cost, and $\sigma^2>0$ is a fixed regularization parameter. The infimum is attained by a unique $\pi^\star \in \Pi(P, Q)$ in the transportation polytope $\Pi(P,Q)$, the set of joint distributions with prescribed marginals $P$ and $Q$), and strong duality holds in the sense that \cite{peyre2019computational,chewi2025statistical}
\begin{equation}\label{eqn:entropic_OT_strong_duality}S(P, Q)
= \sup_{(f, g) \in L^{\infty}(P) \times L^{\infty}(Q)} \Phi(f,g),
\end{equation} where $\Phi(f,g)=\Phi(f,g,P,Q,\sigma)$ is defined as
\begin{equation}\label{eqn:phifull}  \Phi(f,g):=\int f(x)\ud P(x)+ \int g(y)\ud Q(y) - \sigma^2\int\int  
(e^{(f(x)+g(y)-c(x,y))/\sigma^2} - 1)\ud (P \otimes Q)(x,y) .\end{equation} The supremum in \eqref{eqn:entropic_OT_strong_duality} is attained at a pair $(f^\ast, g^\ast) \in L^{\infty}(P) \times L^{\infty}(Q)$
of dual potentials, which are unique up to the translation
$(f^\ast, g^\ast) \mapsto (f^\ast + a, g^\ast - a)$ for $a \in \RR $. To avoid degeneracies, we will always assume the following gauge constraint
\begin{equation}\label{eq:gauge}
\E_P(f^\ast(X))= 0.
\end{equation}
The first-order conditions for $f^\ast$,$g^\ast$ write as \cite{chewi2025statistical,cuturi2018semidual}
\begin{subequations}
    
\begin{align}
    \label{eqn:fstar_from_gstar} f^\ast(x)&=-\sigma^2 \ln \Big(\int e^{(g^\ast(y)-c(x,y))/\sigma^2} \ud Q(y)\Big)\quad P\text{-a.s},\\
    g^\ast(y)&=-\sigma^2 \ln\Big(\int e^{(f^\ast(x)-c(x,y))/\sigma^2} \ud P(x)\Big)\quad Q\text{-a.s}.\label{eqn:gstar_from_fstar}
\end{align}
\end{subequations}
By replacing $g^\ast$ as a function of $f^\ast$, the last term in \eqref{eqn:phifull} cancels out, and we arrive at the semi-dual formulation:
\begin{equation} \label{eqn:semidual} S(P,Q)= \sup_{f \in L^{\infty}(P)} \Phi(f), 
\end{equation}
where $\Phi(f)=\Phi(f,P,Q,\sigma)$ is the semi-dual functional
$$\Phi(f):= \sup_{g\in L^{\infty}(Q)} \Phi(f,g)= -\sigma^2 \int  \ln\Big(\int e^{(f(x)-c(x,y))/\sigma^2} \ud P(x)\Big)\ud Q(y)+\int f(x)\ud P(x).$$

\subsection*{Joint, conditionals and barycentric projections}
The optimal plan admits the following Radon-Nikodym density $p^\ast$ \cite{rigollet2025sample}: 
\begin{equation}\label{eq:density} p^\ast(x,y):=\frac{\ud \pi^\ast }{\ud (P\otimes Q)}(x,y)=\exp\left(\frac{f^\ast(x)+g^\ast(y)-c(x,y)}{\sigma^2}\right),\quad P\otimes Q\text{-a.s.}\end{equation}
We define the forward and backward barycentric projections, $\for^\ast, \back^\ast$, respectively, as follows:
\begin{subequations} \begin{align}
\label{eqn:for}\for^\ast(x):=\E_{\pi^\ast}\left(Y|X=x\right)=\int y  \ud\pi^\ast(y|x)=\int y p^{\ast}(x,y)  \ud Q(y),\quad P\text{-a.s.},\\
\label{eqn:back}\back^\ast(y):=\E_{\pi^\ast}\left(X|Y=y\right)=\int x \ud\pi^\ast(x|y)=\int x p^{\ast}(x,y) \ud P(x),\quad Q\text{-a.s.}
\end{align}
\end{subequations}
where the rightmost sides above follow from the condition that $\pi^\ast \in \Pi(P,Q).$ The distinction between forward and backward projection is relevant in our case because of the inherently asymmetric regime that we consider. In the following, unless there is ambiguity, we will drop the $\ast$ superscripts to denote optimal objects and will refer to them simply as $f,g,\pi, p,\for,\back$.
\subsection*{Semi-discrete to subGaussian setup}
We make the following two assumptions about the distributions. 
\begin{itemize}
\item[\textbf{(A)}] For the discrete measure, we assume that 
$$P = \sum_{k=1}^K \alpha_k \delta_{x_k},  \quad \alpha_k \geq 0, \quad \sum_{k=1}^K \alpha_k = 1, \quad \lVert x_k\rVert \leq R,k\in[K],$$
and $\underline{\alpha}:=\min_{k}\alpha_k>0$.
\item[(\textbf{B})] We assume that $Y-\mathbb E Y$ is $\varepsilon^2$-subGaussian; i.e., if  $Y \sim Q$, then for all $v\in \mathbb{R}^d$,  $$\mathbb{E} e^{\langle v, Y-\E Y\rangle} \le e^{\varepsilon^2 \|v\|^2/2}.$$ 
\end{itemize}
That is, letting $B(x,R)$ denote the ball centered at $x$ of radius $R$, we assume that $P$ is a non-degenerate mixture of $K$ (fixed) atoms in the ball $B(0,R)$, and that, after centering, $Q$ is subGaussian with proxy $\varepsilon^2$. As we discuss in the appendix \ref{app:proofssec3} we can always assume that $Q$ is centered at the cost of expressing all bounds in terms of $\tilde{R}:=
    \max_{k\in[K]}\|x_k-\E Y\|\leq R+\lVert \E Y\rVert$ instead of $R$ if we assume a uniform bound on $\E Y$. In this discrete-to-subGaussian case, we can identify the potential $f$ with a vector $f_k=f(x_k)$,
and so the semidual functional $\Phi$ is simply a multivariate function $\Phi:\mathbb{R}^K\to \mathbb{R}$:
\begin{equation}\label{eq:phidisc}\Phi(f)=  \sum_{k=1}^K f_k \alpha_k-\sigma^2 \int \ln \Big(\sum_{k=1}^K\alpha_ke^{(f_k-c(x_k,y))/\sigma^2}\Big)\ud Q(y).\end{equation}

\subsection*{Empirical setup, canonical extensions}
We will investigate rates for the empirical objects arising when replacing $P$ and $Q$ by the empirical measures %P_n$ and $Q_n$,
$$
P_n=\frac{1}{n}\sum_{i=1}^n  \delta_{X_i},\quad Q_n= \frac{1}{n}\sum_{i=1}^n \delta_{Y_i},$$
where \(X_1,\ldots,X_n \overset{\mathrm{i.i.d.}}{\sim} P\) and
\(Y_1,\ldots,Y_n \overset{\mathrm{i.i.d.}}{\sim} Q\), with the two samples
\((X_i)_{i=1}^n\) and \((Y_i)_{i=1}^n\) independent of each other. The resulting objects $f_n,g_n,p_n,\for_n$ and $\back_n$ are defined in principle only $P_n$, $Q_n$ or $P_n\otimes Q_n$ almost surely. These can be extended to functions of the entire ambient space through the canonical extensions \cite[Proposition 6]{mena2019statistical}. To do so, note that the right-hand sides in \eqref{eqn:fstar_from_gstar} and \eqref{eqn:gstar_from_fstar} define functions on $\mathbb{R}^d$. Consistent with our semi-discrete motivation, we will treat $g_n,\back_n:\mathbb{R}^d\to \mathbb{R}$ as functions, $f_n,\for_n\in\mathbb{R}^K$ as vectors expressing evaluations at fixed support points $x_1,\ldots x_k$, and $p_n(x_k,\cdot):\mathbb{R}^d\to\mathbb{R}$ as a function for each $k\in[K]$.

\section{Estimation rates for EOT-related objects}\label{sec:sample_complexity}

In this section, we present statistical rates for different objects in the setting described above. Later, in \ref{sub:strategy} we discuss the main ingredients used to establish these results, including new bounds that are of independent interest. We start by establishing the convergence of the potentials

\begin{theorem} \label{teo:potentialbound} Let $P,Q$ satisfy conditions $\textbf{(A)}$ and $ \textbf{(B)}$. Let \((f_n,g_n)\) and \((f,g)\) be the optimal entropic potentials for the problems \((P_n,Q_n)\) and \((P,Q)\), respectively, where \(f_n\) and \(f\) satisfy the gauge condition \eqref{eq:gauge}, i.e.,  $\E_P f_n(X)=\E_P f(X)=0$. Then \[ \E \|f_n-f\|_{L^\infty(P)}^2 \le \frac{C}{n}+r_{n,d}, \qquad \E \|g_n-g\|_{L^2(Q)}^2 \le \frac{C}{n}. \] Here \(C=C(K,\underline\alpha,\tilde{R},\varepsilon,\sigma^2)\) is independent of \(d\). The remainder \(r_{n,d}\) satisfies $ r_{n,d} \le C_2\exp(-cn)$, where $C_2=C_2(d,K,\underline\alpha,\tilde{R},\varepsilon,\sigma^2)$ and $c=c(K,\underline\alpha)$ are two positive constants. \end{theorem}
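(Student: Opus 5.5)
We prove the bound on $f_n-f$ first, using strong concavity of the finite-dimensional semi-dual \eqref{eq:phidisc}, and then transfer it to $g_n-g$ through the canonical extension \eqref{eqn:gstar_from_fstar}.

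\textbf{Step 1: a strong-concavity inequality.} Write $\Phi_n$ for the empirical semi-dual built from $(P_n,Q_n)$. The empirical potential $f_n$ is a vector indexed by the distinct atoms $x_k$. On the event $E=\{\min_k \alpha_{n,k}\ge \underline\alpha/2\}$, every atom appears in the sample. Hoeffding's inequality and a union bound give $\Pb(E^c)\le 2K e^{-cn}$ with $c=c(\underline\alpha)$.

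On $E$ we use the strong-concavity result announced in \Cref{prop:strong}. This is the main obstacle. One has to lower bound the Hessian of $f\mapsto \sigma^2\int \ln\sum_k \alpha_k e^{(f_k-c(x_k,y))/\sigma^2}\,dQ(y)$. That Hessian equals $\sigma^{-2}\int \mathrm{Cov}_{\pi(\cdot|y)}\,dQ(y)$, restricted to the gauge subspace. The required lower bound is a constant $\lambda(K,\underline\alpha,\tilde R,\varepsilon,\sigma^2)$, uniformly on a neighbourhood of $f$ that contains $f_n$.

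To get it, we need the conditional weights $\alpha_k e^{(f_k-c(x_k,y))/\sigma^2}/\sum_j(\cdot)$ bounded below for $y$ in a ball of radius $O(\tilde R+\varepsilon)$. Such a ball carries $Q$-mass bounded below by the subGaussian tail bound. This lower bound in turn requires the uniform bound $\|f\|_\infty\le M(K,\underline\alpha,\tilde R,\varepsilon,\sigma^2)$ from \Cref{prop:potentialbound}. We also need the same bound for $f_n$ on $E$, with $\tilde R$ and $\varepsilon$ replaced by empirical analogues. Since $Q_n$ need not be subGaussian with proxy $\varepsilon^2$, we further intersect $E$ with an event on which $\frac1n\sum\|Y_i\|^2$ and similar empirical moments are controlled. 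This event also fails only with exponentially small probability, but the resulting constant involves $d$, which is why $C_2$ depends on $d$. Where possible we instead argue with population quantities, so that the leading constant stays free of $d$.

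\textbf{Step 2: the bound on $f_n-f$.} Given strong concavity of $\Phi$ with modulus $\lambda$ on the relevant set, and using $\nabla\Phi(f)=0$, we get $\lambda\|f_n-f\|^2\le \langle \nabla\Phi(f_n)-\nabla\Phi_n(f_n),\, f-f_n\rangle$, since $\nabla\Phi_n(f_n)=0$. Hence $\|f_n-f\|\le \lambda^{-1}\|\nabla\Phi(f_n)-\nabla\Phi_n(f_n)\|$.

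The gradient difference has two pieces. The first is $\alpha-\alpha_n$, whose expected squared norm is at most $1/n$. The second is $\int \pi_{f_n}(k|y)\,d(Q-Q_n)(y)$. Because $f_n$ depends on the data, we bound this by a supremum over $f$ in the box $\|f\|_\infty\le M$ of the empirical process indexed by the $K$-dimensional, bounded (by 1), Lipschitz-in-$f$ functions $y\mapsto\pi_f(k|y)$.

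Lipschitz continuity in $f$ holds with constant $1/\sigma^2$, uniformly in $y$. A chaining or covering argument over the $K$-dimensional box then gives an expected squared supremum of order $K^2 M^2/(\sigma^4 n)$, independent of $d$. The alternative is a Taylor-expansion route at $f$, which yields the same $C/n$. On $E^c$ we use the crude bound $\|f_n-f\|_\infty\lesssim$ poly$(d)$, which is where $r_{n,d}$ comes from. Because $P$ is discrete, the $L^\infty(P)$ norm equals the max over $k$.

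\textbf{Step 3: the bound on $g_n-g$.} Write $g_n(y)=-\sigma^2\ln\sum_k\alpha_{n,k}e^{(f_{n,k}-c(x_k,y))/\sigma^2}$, and similarly $g$. The log-sum-exp map is $1$-Lipschitz in sup-norm in $f$. It is also Lipschitz in $\ln\alpha$, and on $E$ we have $|\ln\alpha_{n,k}-\ln\alpha_k|\le 2|\alpha_{n,k}-\alpha_k|/\underline\alpha$. Therefore $|g_n-g|(y)\le \|f_n-f\|_\infty+\sigma^2\max_k|\alpha_{n,k}-\alpha_k|/\alpha_k$, uniformly in $y$.

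On $E^c$, the quantity $g_n$ may involve atoms with $\alpha_{n,k}=0$. There, $|g_n-g|(y)$ is bounded by a quadratic in $\|y\|$ plus constants, so its $L^2(Q)$ norm is finite with no $d$ dependence beyond subGaussian moments. Multiplying by $\Pb(E^c)^{1/2}$ makes this contribution $O(e^{-cn})$, which is absorbed into $C/n$. It remains to check that this absorption does not introduce $d$: we must verify that the $E^c$ contribution for $g$ avoids the $d$-dependent crude bound on $f_n$, since here the gauge together with $P$'s discreteness can be used directly.
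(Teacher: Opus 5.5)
Your Step 2 is a valid alternative to the paper's route. The paper uses the value inequality of Corollary~\ref{cor:strong}, a localized empirical process (Lemma~\ref{lemma:empirical}) with dyadic peeling, and a pivot through $(P,Q_n)$ controlled by the $\chi^2$ bound of Lemma~\ref{lemma:reversevar}. You instead compare gradients, which treats both samples at once and needs no localization, since the functions $y\mapsto\pi_h(k|y)$ are bounded by $1$. One correction: $\Phi_n$ carries the weights $\alpha^n$ inside the logarithm, so the gradient difference also contains $\int(\pi^{\alpha^n}_{f_n}-\pi^{\alpha}_{f_n})\,\ud Q_n$. This term is $O(\|\alpha^n-\alpha\|_\infty/\underline\alpha)$ on $E$, so it is harmless.

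The first genuine gap is the input Step 2 needs: a $d$-free bound $\|f_n\|_\infty\le M$ on an event whose complement you can afford. Your event controlling ``$\frac1n\sum_i\|Y_i\|^2$ and similar empirical moments'' does not deliver it. The quantity $\frac1n\sum_i\|Y_i\|^2\asymp d\varepsilon^2$ plays no role in Proposition~\ref{prop:potentialbound}. Applied to $(P,Q_n)$, that proof needs the empirical moment generating functions $\frac1n\sum_i e^{\langle x_{k'}-x_k,Y_i\rangle/\sigma^2}$, or the empirical proxy $\tilde{\varepsilon}$. These averages have heavy-tailed summands, so a single large $Y_i$ spoils them, and the failure probability is not $e^{-cn}$; for $\tilde{\varepsilon}$ it is only polynomial (Lemma~\ref{lemma:sigmaprob}). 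Hence this bad event cannot be sent into $r_{n,d}\le C_2e^{-cn}$ with a $d$-dependent crude bound. Either $d$ enters $M$, and through $\lambda\propto e^{-4M/\sigma^2}$ the leading constant, or the remainder is not exponentially small.

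The paper handles this differently. It keeps the $d$-free bound of Proposition~\ref{prop:potentialbound}, with the random $\tilde{\varepsilon}$, also on $\{\tilde{\varepsilon}^2\ge12\varepsilon^2\}$. It then applies Cauchy--Schwarz with the $d$-free moments of $\tilde{\varepsilon}$ (Lemma~\ref{lemma:MNW}(d)) and $\Pb(\tilde{\varepsilon}^2\ge12\varepsilon^2)^{1/2}\lesssim n^{-1}$, so this event costs $O(1/n)$ with a $d$-free constant. The dimension-dependent Proposition~\ref{prop:potentialboundd} is used only on $\Lambda_n^c$. A fix closer to your plan also works. On $E$, the marginal constraint $\alpha^n_{k'}=\int\pi_n(k'|y)\,\ud Q_n(y)$ gives $f_n(x_k)-f_n(x_{k'})\le t+\sigma^2\log(4/\underline\alpha)$ as soon as $Q_n(|\langle x_k-x_{k'},y\rangle|>t-\tilde{R}^2)\le\underline\alpha/4$. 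For a $d$-free $t$, Hoeffding makes this fail with probability at most $K^2e^{-n\underline\alpha^2/32}$. Relatedly, your sketch of the Hessian bound through a ball of radius $O(\tilde{R}+\varepsilon)$ would reintroduce $d$, since such a ball can have $Q$-mass $e^{-cd}$. One needs slabs in the directions $x_k-x_{k'}$, although citing Proposition~\ref{prop:strong} makes this moot.

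The second gap is the one you flag at the end of Step 3, and it cannot be left as a check. The $g$-bound has no remainder and a $d$-free $C$, and absorbing a term $C_2(d)e^{-cn}$ into $C/n$ makes $C$ depend on $d$. So on $E^c$ you need $d$-free moment bounds on $\|g_n-g\|_{L^2(Q)}$; polynomial growth in $n$ is harmless. A bound ``quadratic in $\|y\|$'' would not do, since $\E\|Y\|^4\asymp d^2\varepsilon^4$. The quadratic parts of the two log-sum-exps cancel, leaving $\max_{k,k'}|\langle x_k-x_{k'},y\rangle|$, whose moments are $d$-free. However, the $f_n$-dependent part still needs a $d$-free bound on $f_n$ at the observed atoms.

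The paper separates the two effects with the pivot $\breve g_n(y)=-\sigma^2\log\sum_k\alpha^n_k e^{(f(x_k)-c(x_k,y))/\sigma^2}$. On every event, $|g_n-\breve g_n|(y)\lesssim\|f_n-f\|_{L^\infty(P_n)}$ uniformly in $y$, and this norm carries no remainder (Corollary~\ref{cor:onesample}). Meanwhile $\breve g_n-g$ involves only the population $f$ and $\alpha^n$, and on $\Lambda_n^c$ it satisfies $|\breve g_n-g|(y)\lesssim\log n+\max_{k,k'}|\langle x_k-x_{k'},y\rangle|$. Thus the crude bound on $f_n$ never enters the estimate for $g$.
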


The exponentially small, dimension-dependent remainder \(r_{n,d}\) term in Theorem \ref{teo:potentialbound}
arises only from the event that \(P_n\) fails to charge all atoms of \(P\). On
this event, the empirical potentials are intrinsically defined only on
\(\operatorname{supp}(P_n)\), and the term \(r_{n,d}\) appears from (perhaps sub-optimal) control of the canonical extension on missing atoms. We view this dependence as a
technical artifact of the fixed-support formulation used in the theorem. 

We note in the following corollary, that this term 
disappears in the one-sample setting, where the discrete marginal \(P\) is kept
fixed, and it can also be avoided in the two-sample setting by formulating the
bounds intrinsically on the empirical support, for example, using the
\(L^\infty(P_n)\) norm.
\begin{corollary}
\label{cor:onesample}
Let $f,f_n$ be as in \Cref{teo:potentialbound}. Then
$$
\mathbb E\|f_n-f\|_{L^\infty(P_n)}^2\leq \frac{C}{n}.
$$
In the one-sample setting, let $f_n$ instead denote the optimal
discrete-side potential for $(P,Q_n)$, with $P$ kept fixed and
$\mathbb E_P f_n(X)=0$. Then
$$
\mathbb E\|f_n-f\|_{L^\infty(P)}^2\leq \frac{C}{n}.
$$
\end{corollary}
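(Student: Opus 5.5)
The corollary follows by rerunning the argument of \Cref{teo:potentialbound} and checking where the remainder $r_{n,d}$ came from. That remainder is produced only on the event $E_n^c$, where $E_n:=\{P_n(\{x_k\})>0 \text{ for all } k\in[K]\}$. On $E_n^c$ we had to control the canonical extension of $f_n$ at atoms not charged by $P_n$, and that control is what brings in the dimension dependence.

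\emph{Two-sample bound in $L^\infty(P_n)$.} Split
\[
\E\|f_n-f\|_{L^\infty(P_n)}^2=\E\big[\|f_n-f\|_{L^\infty(P_n)}^2\mathbf 1_{E_n}\big]+\E\big[\|f_n-f\|_{L^\infty(P_n)}^2\mathbf 1_{E_n^c}\big].
\]
On $E_n$ we have $\operatorname{supp}(P_n)=\operatorname{supp}(P)$, so $\|\cdot\|_{L^\infty(P_n)}=\|\cdot\|_{L^\infty(P)}$. The first term is therefore exactly the main term bounded in the proof of \Cref{teo:potentialbound}, namely at most $C/n$, with $C$ independent of $d$.

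For the second term, $\|f_n-f\|_{L^\infty(P_n)}$ involves $f_n$ only on atoms actually charged by $P_n$, so no canonical extension is needed. The plan is to bound both $\|f_n\|_{L^\infty(P_n)}$ and $\|f\|_{L^\infty(P)}$ by a dimension-free random quantity. For this I would invoke the uniform discrete-side bound of \Cref{prop:potentialbound}, applied to the pair $(P_n,Q_n)$ restricted to its support. That bound depends on the minimal mass of $P_n$, on $\tilde R$, and on subGaussian moments of $Q_n$ such as $\frac1n\sum_i\|Y_i\|^2$ through projections. The gauge also needs care: the theorem fixes $\E_P f_n=0$, so on the support of $P_n$ this differs from the intrinsic $P_n$-gauge only by a shift that is bounded in the same way.

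Cauchy--Schwarz then gives
\[
\E\big[\|f_n-f\|_{L^\infty(P_n)}^2\mathbf 1_{E_n^c}\big]\le \big(\E\|f_n-f\|_{L^\infty(P_n)}^4\big)^{1/2}\,\Pb(E_n^c)^{1/2}.
\]
The union bound gives $\Pb(E_n^c)\le K(1-\underline\alpha)^n$, which is exponentially small and hence $\le C/n$. This step is the main obstacle, because the minimal empirical mass of $P_n$ can be as small as $1/n$, and the bound of \Cref{prop:potentialbound} may blow up like $\log n$ in that case. I would handle this by using the fact that \Cref{prop:potentialbound} controls potential oscillation through the cost and the $Q$-tails, giving something like $|f_k-f_j|\lesssim \tilde R^2+\tilde R\,\E_{Q_n}\|Y\|$ plus terms in $\sigma^2\ln(1/\underline\alpha_n)$. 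The $\ln n$ growth is polynomial and is absorbed by $(1-\underline\alpha)^{n/2}$. The point is to make sure none of these constants involve $d$ beyond what is killed by the exponential. If some projected-norm term does involve $d$, I would keep only directions spanned by $x_k-x_j$, which live in an at most $K$-dimensional subspace, so the $d$-dependence disappears.

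\emph{One-sample bound.} Here $P$ is fixed, so the event $E_n^c$ never occurs and the semi-dual \eqref{eq:phidisc} uses the fixed weights $\alpha_k\ge\underline\alpha$. The argument is the main-term argument of the theorem run verbatim: use strong concavity of $\Phi$ on the gauge hyperplane from \Cref{prop:strong}, apply it to $\Phi_n(f)-\Phi_n(f_n)\le 0$, and compare $\nabla\Phi_n(f)$ with $\nabla\Phi(f)=0$. The gradient difference is an average of $n$ i.i.d.\ centered bounded vectors $(\alpha_k p(x_k,Y_i)-\alpha_k)_k$, so its expected squared norm is $O(K/n)$. The strong-concavity constant holds on a ball containing $f_n$ by \Cref{prop:potentialbound}, and this gives $\E\|f_n-f\|_{L^\infty(P)}^2\le C/n$.
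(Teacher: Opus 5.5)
Your reading of where $r_{n,d}$ comes from matches the paper's own two-line proof: the remainder is tied to uncharged atoms, the one-sample problem has none, and $L^\infty(P_n)$ never evaluates there. Two steps of your execution do not hold as written.

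First, the paper's remainder is not produced only on $E_n^c$. In \Cref{lemma:varbound2} the split is on $\Lambda_n$ from \eqref{eqn:lambda}, and the dimension-dependent \Cref{prop:potentialboundd} is used on all of $\Lambda_n^c$, including $E_n\cap\Lambda_n^c$. So the $\mathbf 1_{E_n}$ term is not ``exactly the main term''; you should split on $\Lambda_n$ instead, still with $\Pb(\Lambda_n^c)\le Ke^{-c\underline\alpha n}$ by \Cref{lemma:multinomial}.

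Second, and more seriously, the gauge step is unsupported. Since $\E_Pf_n=\sum_k\alpha_kf_n(x_k)=0$ involves the canonical extension at uncharged atoms, $\|f_n-f\|_{L^\infty(P_n)}$ still depends on those values through the constant shift, and ``bounded in the same way'' is exactly the step that needs proof. \Cref{prop:potentialbound} cannot supply it. Its bound carries $\sigma^2\log(1/\text{mass})$, and its proof gives only a lower bound on the extension at a massless point.

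The missing ingredient is already in the paper. By \eqref{eqn::fbound}, the canonical extension satisfies $\max_{k,k'\in[K]}|f_n(x_k)-f_n(x_{k'})|\le M_n:=\max_{i\le n}\max_{k,k'}|c(x_k,Y_i)-c(x_{k'},Y_i)|$. In centered coordinates, $M_n\le\tilde R^2+\max_{i,k,k'}|\langle x_k-x_{k'},Y_i\rangle|$, a maximum of $nK^2$ one-dimensional subGaussians with proxy at most $4\varepsilon^2\tilde R^2$. Hence $\E M_n^4\lesssim\tilde R^8+\varepsilon^4\tilde R^4\log^2(2nK^2)$, with no $d$. The gauge forces $\min_kf_n(x_k)\le0\le\max_kf_n(x_k)$ over all of $[K]$, so $\|f_n\|_{L^\infty(P)}\le M_n$. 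Cauchy--Schwarz against $\Pb(\Lambda_n^c)^{1/2}$ then gives a dimension-free $\lesssim 1/n$ on $\Lambda_n^c$. This replaces your $\log(1/\underline\alpha_n)$ and projection discussion entirely, and the same argument would remove $r_{n,d}$ from the $L^\infty(P)$ bound too.

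For the one-sample claim, the paper just uses \Cref{lemma:varbound1} plus \eqref{eq:infityvar}, with the gauge supplying the sign change. What you describe is not that argument run verbatim but a $Z$-estimation argument, and it closes only if the curvature is that of the empirical semidual $\Phi_n$. In that case strong concavity and $\Phi_n(f_n)\ge\Phi_n(f)$ give $\tfrac{\kappa_n}{2}\|S(f_n-f)\|^2\le\langle\nabla\Phi_n(f),f_n-f\rangle$, with $S$ as in \eqref{eq:pperp}. Here $\nabla\Phi_n(f)$ is an average of $n$ i.i.d.\ centered bounded vectors, since $\nabla\Phi(f)=0$. If you use the population curvature instead, the comparison involves $\nabla(\Phi-\Phi_n)$ along the whole segment between $f$ and $f_n$. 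That is the uniform empirical process of \Cref{lemma:empirical}, not the gradient at $f$ alone.

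Moreover, $\kappa_n$ from \Cref{prop:strong} is random. It depends on the empirical moment generating function of $Q_n$ and on $\|f_n\|_\infty$, which \Cref{prop:potentialbound} controls only through the empirical proxy $\tilde\varepsilon$. You therefore need the split on $\{\tilde\varepsilon^2<12\varepsilon^2\}$ with \Cref{lemma:sigmaprob} and the moment bounds of \Cref{lemma:MNW}, exactly as in \Cref{lemma:varbound1}. With that added, your gradient route is a valid and lighter alternative to the paper's peeling argument, because it avoids the localized empirical-process bound.
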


Additionally, we have the following convergence result for the joint density

\begin{theorem}\label{theo:densitibound}
Denote by $p$ and $p_n$ the densities of the optimal population and empirical couplings, $\pi,\pi_n$, with respect to $P\otimes Q$ and $P_n\otimes Q_n$, respectively, as defined in \eqref{eq:density}. Then, $$\E \lVert p-p_n\rVert_{L^\infty(P; L^2(Q))}^2 \leq \frac{C}{n}.$$
where $\|h\|_{L^\infty(P;L^2(Q))}^2
:=
\max_{k\in[K]}
\int|h(x_k,y)|^2\,\ud Q(y)$.
\end{theorem}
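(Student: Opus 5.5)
The plan is to reduce the density bound to \Cref{teo:potentialbound} through a Lipschitz estimate for the exponential. Using the canonical extensions, for each atom $x_k$ and $y\in\mathbb{R}^d$ we have
$$p_n(x_k,y)-p(x_k,y)=e^{-c(x_k,y)/\sigma^2}\Big(e^{(f_{n,k}+g_n(y))/\sigma^2}-e^{(f_k+g(y))/\sigma^2}\Big).$$
Since $|e^a-e^b|\le |a-b|\max(e^a,e^b)$, this gives
$$|p_n-p|(x_k,y)\le \sigma^{-2}\big(|f_{n,k}-f_k|+|g_n(y)-g(y)|\big)\max\big(p_n(x_k,y),p(x_k,y)\big).$$
So everything hinges on a deterministic uniform bound $\sup_{k,y}\max(p,p_n)(x_k,y)\le M$, with $M$ depending only on $K,\underline\alpha,\tilde R,\varepsilon,\sigma^2$, valid at least on a high-probability event. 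Given such a bound,
$$\max_k\int |p_n-p|^2(x_k,y)\,\ud Q(y)\le \frac{2M^2}{\sigma^4}\Big(\|f_n-f\|_{L^\infty(P)}^2+\|g_n-g\|_{L^2(Q)}^2\Big),$$
and taking expectations and invoking \Cref{teo:potentialbound} yields $C/n+r_{n,d}$.

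The uniform bound on the density comes from the semi-discrete structure. By the canonical extension \eqref{eqn:gstar_from_fstar} for $g_n$, with respect to $P_n$,
$$p_n(x_k,y)=\frac{e^{(f_{n,k}-c(x_k,y))/\sigma^2}}{\sum_j \alpha_{n,j}e^{(f_{n,j}-c(x_j,y))/\sigma^2}}\le \frac{1}{\alpha_{n,k}},$$
where $\alpha_{n,j}=P_n(\{x_j\})$. Likewise $p(x_k,y)\le 1/\alpha_k\le 1/\underline\alpha$. This is a clean bound that needs no control on the size of the potentials, and it is where the finite support pays off.

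The main obstacle is the event where some $\alpha_{n,k}$ is small or zero, which is also the source of $r_{n,d}$. The density bound we want has no remainder, so this event must be handled separately.
\begin{itemize}
\item On $E=\{\min_k \alpha_{n,k}\ge \underline\alpha/2\}$, the bounds above give $M=2/\underline\alpha$.
\item By Hoeffding's inequality and a union bound, $\Pb(E^c)\le 2K e^{-n\underline\alpha^2/2}$.
\item On $E^c$, observe that $p_n$ is a density with respect to $P_n\otimes Q_n$. On atoms with $\alpha_{n,k}>0$ the formula still gives $p_n\le 1/\alpha_{n,k}\le n$. On missing atoms the canonical extension is determined by the extended $f_{n,k}$, so we need the crude control used for $r_{n,d}$ in \Cref{teo:potentialbound}. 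I expect this to give $\int p_n^2(x_k,\cdot)\,\ud Q\le \mathrm{poly}(n)\cdot C_2$.
\end{itemize}
Multiplying by $\Pb(E^c)$ then gives an exponentially small term. This is absorbed into $C/n$ only if its constant is dimension-free; if not, I would accept a $d$-dependent constant multiplying $e^{-cn}$, which is still $\le C/n$ for fixed parameters.

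It remains to split the expectation along $E$. On $E$, the Lipschitz estimate combined with \Cref{teo:potentialbound} gives $C/n$. Here the $r_{n,d}$ contribution of \Cref{teo:potentialbound} is not needed, since on $E$ all atoms are charged, and that contribution arose only from missing atoms. The $E^c$ part is handled as above.
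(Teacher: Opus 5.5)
Your good-event argument is the paper's. The pieces are the bound $|e^a-e^b|\le e^{\max\{a,b\}}|a-b|$, the estimates $p(x_k,\cdot)\le 1/\underline\alpha$ and $p_n(x_k,\cdot)\le 1/\alpha_{n,k}\le 2/\underline\alpha$, and the reduction to Theorem~\ref{teo:potentialbound}. To make your remark about $r_{n,d}$ precise, note that on $E$ the $L^\infty(P)$ and $L^\infty(P_n)$ norms coincide, so Corollary~\ref{cor:onesample} gives $C/n$ with no remainder.

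The gap is the bad event at the uncharged atoms, which you leave at ``I expect this to give $\mathrm{poly}(n)\cdot C_2$''. The tool you propose, the crude control behind $r_{n,d}$ (Proposition~\ref{prop:potentialboundd} with the random proxy $\tilde\varepsilon$), does not deliver this. In Theorem~\ref{teo:potentialbound} that bound enters only through $\|f_n\|_\infty^4$, so polynomial moments of $\tilde\varepsilon$ suffice. The density, however, is \emph{exponential} in the potentials, so you would need $\E\exp\bigl(C d^2\tilde\varepsilon^4/\sigma^2\bigr)<\infty$. Since $\tilde\varepsilon^2$ grows like $\max_i\|Y_i\|^2$ up to logarithmic and dimensional factors, this is an exponential of a quartic in $\max_i\|Y_i\|$. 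Its expectation is infinite already for Gaussian $Q$.

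The missing idea is that you never need the size of $f_n$, only its oscillation across atoms, and the canonical extension controls that directly. For every $k$, charged or not, $f_n(x_k)=-\sigma^2\log\frac1n\sum_{i}e^{(g_n(Y_i)-c(x_k,Y_i))/\sigma^2}$. Replacing $x_k$ by $x_{k'}$ therefore moves each exponent by at most $M_n/\sigma^2$, where
\[
M_n:=\max_{i\le n}\max_{k,k'}|c(x_k,Y_i)-c(x_{k'},Y_i)|\le R^2+\max_{i,k,k'}|\langle x_k-x_{k'},Y_i\rangle| .
\]
Hence $\max_{k,k'}|f_n(x_k)-f_n(x_{k'})|\le M_n$. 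Comparing with any charged atom $k_n$, for which $p_n(x_{k_n},\cdot)\le n$, gives
\[
p_n(x_k,y)\le n\,e^{M_n/\sigma^2}\exp\Bigl(\max_{k,k'}|c(x_k,y)-c(x_{k'},y)|/\sigma^2\Bigr).
\]
A union bound over $(i,k,k')$, together with the subGaussian moment generating function of $\langle x_k-x_{k'},Y\rangle$ (proxy at most $4\varepsilon^2R^2$), yields $\E e^{2mM_n/\sigma^2}\lesssim n$. Hence $\E\|p_n\|_{L^\infty(P;L^2(Q))}^{2m}\lesssim n^{2m+1}$ with dimension-free constants.

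The paper closes with Cauchy--Schwarz against $\Pb(\Lambda_n^c)\le Ke^{-c\underline\alpha n}$. Alternatively, this bound depends only on the $Y$-sample and $E$ only on the $X$-sample, so your plan of multiplying by $\Pb(E^c)$ becomes legitimate. Either way no $d$-dependent remainder appears, so you need not concede one.
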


Furthermore, we can state the following convergence bounds for the forward and backward barycentric projections. This result will be used to obtain the first convergence rate for the sample-based Sinkhorn Expectation-Maximization algorithm, presented in Section \ref{sec:SEM}.

\begin{theorem}\label{theo:barybound} Let \(\for,\back\) be the population barycentric projections associated with
\((P,Q)\), and let \(\for_n,\back_n\) be the empirical barycentric
projections associated with \((P_n,Q_n)\). Then,
%$$\E \lVert \pi(x_k|\cdot)-\pi_n(x_k|\cdot)\lVert^2_\infty\lesssim n^{-1}.$$
\begin{equation}\label{eqn:baryconv}
 \E \lVert \back -\back_n\lVert^2_{L^\infty(Q)}\leq \frac{C}{n},\quad \E \lVert \for -\for_n\lVert^2_{L^\infty(P)}\leq \frac{Cd}{n}.\end{equation}
\end{theorem}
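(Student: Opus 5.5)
Both bounds come from writing the barycentric projections as integrals against the densities and applying the potential bounds of \Cref{teo:potentialbound} and the density bound of \Cref{theo:densitibound}.

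\textbf{Backward projection.} By \eqref{eqn:back} and the canonical extension,
\[
\back(y)=\sum_k x_k\,\pi(x_k|y), \qquad \pi(x_k|y)=\frac{\alpha_k e^{(f_k-c(x_k,y))/\sigma^2}}{\sum_j \alpha_j e^{(f_j-c(x_j,y))/\sigma^2}} .
\]
The empirical version $\back_n(y)$ has the same form with $\alpha_k$ replaced by the empirical weights $\alpha_{n,k}=P_n(\{x_k\})$ and $f_k$ replaced by $f_{n,k}$. The map $(\log\alpha+f/\sigma^2)\mapsto$ softmax is $1$-Lipschitz from $\ell^\infty$ into $\ell^1$ for each fixed $y$. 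Since $\|x_k\|\le R$, this gives, uniformly in $y$,
\[
\|\back(y)-\back_n(y)\|\le 2R\Bigl(\sigma^{-2}\|f_n-f\|_{\ell^\infty}+\max_k|\log\alpha_{n,k}-\log\alpha_k|\Bigr).
\]
The bound is uniform in $y$, so it controls the $L^\infty(Q)$ norm even though $Q$ is unbounded; this is why the backward rate is dimension-free. If one works with $\tilde R$, the cost differences $c(x_k,y)-c(x_j,y)$ cancel in the same way as before.

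We then split on the event $E=\{\alpha_{n,k}\ge \underline\alpha/2\ \forall k\}$.
\begin{itemize}
\item On $E$, the log-weight differences are bounded by $(2/\underline\alpha)\,|\alpha_{n,k}-\alpha_k|$, which has second moment $O(1/n)$. The potential term contributes $C/n$ by \Cref{teo:potentialbound} together with \Cref{cor:onesample}, since on $E$ the supports of $P_n$ and $P$ coincide.
\item On $E^c$, both projections lie in $B(0,R)$, so the squared error is at most $4R^2$. A Hoeffding bound gives $\Pb(E^c)\le Ke^{-cn}$, which is $\le C/n$.
\end{itemize}
This yields the first bound in \eqref{eqn:baryconv}.

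\textbf{Forward projection.} We have $\for_k=\int y\,p(x_k,y)\,\ud Q(y)$ and $\for_{n,k}=\int y\,p_n(x_k,y)\,\ud Q_n(y)$. We decompose
\[
\for_{n,k}-\for_k=\int y\,(p_n-p)(x_k,y)\,\ud Q_n(y)+\int y\,p(x_k,y)\,\ud(Q_n-Q)(y).
\]

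The second term is a centered i.i.d.\ average of the vectors $Y_i\,p(x_k,Y_i)$. Using the bound $p\le e^{C(1+\|y\|)}$ from the potential bounds, together with subGaussian moments, its second moment is at most $\E\|Y\|^2 p^2/n\lesssim d/n$. Taking the maximum over the $K$ atoms costs only a factor $K$.

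The first term is the main obstacle, because $p_n-p$ is evaluated against $Q_n$ rather than $Q$. Cauchy--Schwarz gives
\[
\Bigl\|\int y\,(p_n-p)(x_k,y)\,\ud Q_n(y)\Bigr\|^2\le \int \|y\|^2\,\ud Q_n\cdot\int (p_n-p)^2(x_k,\cdot)\,\ud Q_n .
\]
Here $\int\|y\|^2\,\ud Q_n$ concentrates at order $d$. For the other factor, write $p_n-p=p\,(e^{\Delta}-1)$ with $\Delta=(f_n-f)_k+(g_n-g)(y)$, all divided by $\sigma^2$. By the canonical extension, $g_n-g$ is uniformly bounded by a multiple of $\|f_n-f\|_{\ell^\infty}$ plus the log-weight errors, because the softmax/log-sum-exp map is Lipschitz. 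This turns the first term into $d\cdot\E\bigl[\|f_n-f\|_\infty^2\cdot\int p^2\,\ud Q_n\bigr]$, with the uniform Lipschitz bound replacing the appeal to \Cref{theo:densitibound} on the empirical measure. Two points require care: the dependence between $f_n$ and $Q_n$, handled by H\"older with fourth moments or by high-probability bounds, and the event $E^c$, which is absorbed by an exponentially small term as before. Combining the two terms gives $Cd/n$.
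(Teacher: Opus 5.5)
You follow the paper's structure. You split on $\{\alpha^n_k\ge\alpha_k/2\ \forall k\}$, decompose $\for_n(x_k)-\for(x_k)$ into a centered average against $Q_n-Q$ plus $\int y\,(p_n-p)(x_k,y)\,\ud Q_n(y)$, and apply Cauchy--Schwarz to the latter, exactly as the paper does. The backward part is fine. Your softmax-Lipschitz bound is a cleaner packaging of the paper's pointwise estimate \eqref{eq:pointwise-density-good-event}. Invoking \Cref{cor:onesample} on $E$, where $\|\cdot\|_{L^\infty(P)}=\|\cdot\|_{L^\infty(P_n)}$, is exactly what makes that constant dimension-free.

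The forward part has a genuine gap: you control the density by $p\le e^{C(1+\|y\|)}$. For subGaussian $Y\in\mathbb{R}^d$ (e.g.\ Gaussian), $\|Y\|$ is typically of order $\varepsilon\sqrt d$. With that bound, $\E\|Y\|^2p(x_k,Y)^2$, and likewise $\int p^2\,\ud Q_n$ in your first term, are only bounded by something of order $d\,e^{c\sqrt d}$. So the claimed $Cd/n$ with $C$ free of $d$ does not follow.

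The missing observation is already contained in your softmax formula. Uniformly in $y$, $p(x_k,y)=\pi(x_k\mid y)/\alpha_k\le 1/\underline\alpha$, and on $E$ also $p_n(x_k,y)\le 1/\alpha^n_k\le 2/\underline\alpha$. With this, $\E\|A_{n,k}\|^2\le d\varepsilon^2/(\underline\alpha^2 n)$. For the other term, do not write $p_n-p=p(e^\Delta-1)$; its factor $e^{|\Delta|}$ is not uniformly controlled, since $\|f_n-f\|_\infty$ is only bounded through random quantities. Use instead $|p_n-p|\le\max\{p,p_n\}\,|\Delta|\le(2/\underline\alpha)|\Delta|$ on $E$, as the paper does. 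The same uniform bound handles $E^c$. There, $\for_n(x_k)$ is a convex combination of the $Y_i$, and $\|\int y\,p(x_k,y)\,\ud Q_n(y)\|\le\underline\alpha^{-1}\max_i\|Y_i\|$, so an exponentially small $\Pb(E^c)$ times a polynomial moment suffices.

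On the dependence between $f_n$ and $Q_n$, your two routes differ in what they require. The ``H\"older with fourth moments'' route needs $\E\|f_n-f\|_\infty^4\lesssim n^{-2}$, which \Cref{teo:potentialbound} does not provide. The paper proves it separately in \Cref{cor:potential-high-moments}, by rerunning the peeling argument with higher moments of the localized empirical process. Your high-probability alternative works with second moments only. It controls $\frac1n\sum_i\|Y_i\|^2\lesssim d\varepsilon^2$ off an exponentially small event. However, it also relies on $|p_n-p|\le 2/\underline\alpha$ on $E$ to handle that bad event.
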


The following lower bound is inspired by 
\cite{pooladian2023minimax}, who provide a lower bound for the non-regularized map.

\begin{proposition}[Minimax lower bound]\label{prop:minimax_lower_bound}
Fix $R>0$, $\varepsilon>0$, $\sigma^2>0$, and $M\geq0$.
Let $\mathcal C$ be the class of pairs $(P,Q)$ satisfying
Assumptions~\textbf{(A)}--\textbf{(B)} with these fixed parameters,
$K=2$, minimum atom mass at least $1/4$, and $\|\E_QY\|\leq M$.
Then, for every integer $n\geq1$,
\[
\inf_{\widehat T}
\sup_{(P,Q)\in\mathcal C}
\E_{P^n\otimes Q^n}
\left[
\|\widehat T-\back_{P,Q}\|_{L^2(Q)}^2
\right]
\geq
\frac{R^2}{64n},
\]
where $\back_{P,Q}$ is the backward barycentric projection and the infimum
is over all estimators based on $n$ independent samples from each of $P$
and $Q$, with the two samples independent.
Consequently, the $n^{-1}$ squared-error rate for the backward barycentric
projection in Theorem~\ref{theo:barybound} is minimax optimal over
$\mathcal C$, since $\|h\|_{L^2(Q)}\leq\|h\|_{L^\infty(Q)}$.
\end{proposition}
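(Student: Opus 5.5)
We prove the bound by Le Cam's two-point method. The plan is to construct two pairs $(P_0,Q),(P_1,Q)\in\mathcal C$ that share the same source $Q$ and differ only in the atom weights of $P$. Their backward barycentric projections should be separated by order $R/\sqrt n$ in $L^2(Q)$, while the sample distributions $P_0^n\otimes Q^n$ and $P_1^n\otimes Q^n$ remain statistically close.

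\emph{Construction.} Take $x_1=Re_1$, $x_2=-Re_1$, and $P_j=\alpha_j\delta_{x_1}+(1-\alpha_j)\delta_{x_2}$ with $\alpha_0=1/2$ and $\alpha_1=1/2+\delta$, where $\delta\le 1/4$. Then both minimum masses are at least $1/4$. For the source, the simplest choice keeping $Q$ common is $Q=\delta_0$. This is $\varepsilon^2$-subGaussian trivially and has $\E_QY=0$, so $\|\E_Q Y\|\le M$ for any $M\ge 0$.

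\emph{Separation.} With $Q=\delta_0$, the only coupling in $\Pi(P_j,Q)$ is $P_j\otimes\delta_0$. Hence $\back_{P_j,Q}(0)=\E_{P_j}X=(2\alpha_j-1)Re_1$. This gives
\[
\|\back_{P_0,Q}-\back_{P_1,Q}\|_{L^2(Q)}=2\delta R .
\]
If one insists on a nondegenerate $Q$, the same idea works with any symmetric subGaussian $Q$ with $\E_QY=0$. In that case one must show that the backward projection moves by order $\delta R$ in $L^2(Q)$ when the weights change. This can be done by differentiating the Schr\"odinger system in $\alpha$, and I expect it to be the main obstacle. The degenerate choice avoids it entirely, since the class $\mathcal C$ does not exclude Dirac masses.

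\emph{Indistinguishability.} The $Q$-samples carry no information because $Q$ is common to both pairs. For the $P$-samples, $\KL(P_1^n\|P_0^n)=n\,\KL(\mathrm{Ber}(1/2+\delta)\|\mathrm{Ber}(1/2))\le n\chi^2=4n\delta^2$. Choose $\delta=1/(4\sqrt n)$, which is at most $1/4$ for all $n\ge 1$. Then the KL divergence is at most $1/4$, and Pinsker's inequality gives total variation at most $\sqrt{1/8}<1/2$.

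Le Cam's lemma then yields
\[
\inf_{\widehat T}\max_j \E_j\|\widehat T-\back_j\|^2_{L^2(Q)}\ \ge\ \frac{(2\delta R)^2}{4}\cdot\frac{1-\mathrm{TV}}{2}.
\]
Here we use the triangle inequality: for any $\widehat T$, $\|\widehat T-\back_0\|^2+\|\widehat T-\back_1\|^2\ge \tfrac12\|\back_0-\back_1\|^2$. Substituting $\delta=1/(4\sqrt n)$ gives a bound of the form $cR^2/n$. To reach exactly $R^2/(64n)$, tune $\delta$ or use the averaged (Bayes) form. Since $(1-\mathrm{TV})/2\ge 0.32$ and $(2\delta R)^2/4=R^2/(16n)$, the product is at least $0.02R^2/n\ge R^2/(64n)$, which gives the stated constant.

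The final claim then follows from $\|h\|_{L^2(Q)}\le\|h\|_{L^\infty(Q)}$ together with Theorem~\ref{theo:barybound}.
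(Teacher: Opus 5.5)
Your proposal is correct and follows the paper's proof essentially step for step: the same Le Cam two-point argument with the degenerate source $Q=\delta_0$, antipodal atoms of norm $R$ perturbed by $r=1/(4\sqrt n)$, and the form $\tfrac18\|T_1-T_0\|^2_{L^2(Q)}(1-d_{\mathrm{TV}})$ of the two-point bound. The only cosmetic difference is that you bound the KL divergence by the $\chi^2$-divergence, giving $4n\delta^2=1/4$, whereas the paper computes $-\tfrac12\log(1-4r^2)\le\tfrac83 r^2$; your route yields a slightly smaller total-variation bound and therefore a constant that comfortably exceeds $1/64$.
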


The proof of the proposition above is given in Appendix~\ref{app:minimax_lower_bound}. We conclude this section with a sketch of the proof of Theorem~\ref{teo:potentialbound}, emphasizing the main technical ingredients and the two intermediate estimates on which the convergence rates rely.

\subsection{High-level strategy}
\label{sub:strategy}

A similar result had previously been established in the bounded-to-bounded \cite{rigollet2025sample} and bounded-to-discrete case \cite{pooladian2023minimax}. Although none of the arguments in these papers directly extend to our setup, we followed the route inspired by the latter \cite{pooladian2023minimax}, which roughly follows two steps: establishing strong concavity of the semi-dual objective $\Phi$, and empirical process arguments that enable control of $\Phi(f)-\Phi(f_n)$ at the rate $n^{-1}$. In order to extend this argument, we require control over the Hessian of $\Phi$ in our setup. Specifically, we show that

\begin{proposition}\label{prop:strong}
Suppose that $f\in\mathbb{R}^K$ satisfies 
$\lVert f\rVert_\infty \le L$ (not necessarily an optimal potential).
Then,
\[
\nabla^2 \Phi(f) \preceq  -\kappa\Big(I_K - \tfrac{1}{K}\mathbf{1}\mathbf{1}^\top\Big),
\]
with
\[
\kappa
=
\frac{1}{\sigma^2} K \underline{\alpha}^2
\exp\!\Big(-\left(
4 L +  \tilde{R}^2  + 8\tilde{R}^2\varepsilon^2/\sigma^2
\right)/\sigma^2\Big).\]
In other words, the smallest non-zero eigenvalue of $-\nabla^2 \Phi(f)$ is bounded below by $\kappa$. 
\end{proposition}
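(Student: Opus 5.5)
We compute the Hessian of \eqref{eq:phidisc} explicitly. Write $w_k(y)=\alpha_k e^{(f_k-c(x_k,y))/\sigma^2}$ and $\pi_k(y)=w_k(y)/\sum_j w_j(y)$, so that $\pi(y)=(\pi_1(y),\dots,\pi_K(y))$ is a probability vector. The linear term of $\Phi$ has zero Hessian. Differentiating the log-sum-exp term twice gives
\[
\nabla^2\Phi(f) = -\frac{1}{\sigma^2}\int \Big(\mathrm{diag}(\pi(y))-\pi(y)\pi(y)^\top\Big)\,\ud Q(y).
\]
Hence it suffices to show that, for every $v\perp \mathbf 1$,
\[
\int v^\top\big(\mathrm{diag}(\pi)-\pi\pi^\top\big)v\,\ud Q \ge \sigma^2\kappa\|v\|^2.
\]
(Both sides vanish along $\mathbf 1$, which matches the projector $I_K-\tfrac1K\mathbf 1\mathbf 1^\top$.)

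\textbf{Step 1 (pointwise bound).} For a probability vector $\pi$, the quadratic form is the variance of $v_J$ for $J\sim\pi$:
\[
v^\top(\mathrm{diag}\pi-\pi\pi^\top)v=\tfrac12\sum_{j,k}\pi_j\pi_k(v_j-v_k)^2 .
\]
If $\pi_k\ge m$ for all $k$, this is at least $\tfrac{m^2}{2}\sum_{j,k}(v_j-v_k)^2=m^2K\|v\|^2$ when $v\perp\mathbf 1$. So we need $\int \pi_j(y)\pi_k(y)\,\ud Q(y)$ bounded below uniformly in $j,k$. The cleanest route is to keep the product and bound
\[
\int v^\top(\cdots)v\,\ud Q\ \ge\ \Big(\min_{j,k}\int \pi_j\pi_k\,\ud Q\Big)\, K\|v\|^2 .
\]

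\textbf{Step 2 (lower bound on $\pi_k$).} Assume w.l.o.g.\ that $Q$ is centered (as in Appendix~\ref{app:proofssec3}, replacing $R$ by $\tilde R$). Since $c(x_k,y)=\|x_k\|^2/2-\langle x_k,y\rangle+\|y\|^2/2$, the factor $\|y\|^2/2$ cancels in $\pi_k$, and
\[
\pi_k(y)\ \ge\ \underline\alpha\, e^{-2L/\sigma^2}\, e^{-\tilde R^2/(2\sigma^2)}\, e^{-2\tilde R\|y\|/\sigma^2},
\]
using $|f_k-f_j|\le 2L$, $\big|\|x_j\|^2-\|x_k\|^2\big|\le\tilde R^2$, $|\langle x_j-x_k,y\rangle|\le 2\tilde R\|y\|$, and $\sum_j\alpha_j=1$. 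Then $\pi_j\pi_k\ge \underline\alpha^2 e^{-(4L+\tilde R^2)/\sigma^2}e^{-4\tilde R\|y\|/\sigma^2}$.

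\textbf{Step 3 (integrating against $Q$) — the main obstacle.} The difficulty is that $\|y\|$ is unbounded and its subGaussian control depends on $d$, while the target constant is dimension-free. I would avoid $\|y\|$ altogether by keeping the directional form $\langle x_j-x_k,y\rangle$. Bounding $\sum_i \alpha_i e^{(f_i-f_k+\langle x_i-x_k,y\rangle+\cdots)/\sigma^2}\le e^{(2L+\tilde R^2/2)/\sigma^2}\max_i e^{\langle x_i-x_k,y\rangle/\sigma^2}$ leaves only one-dimensional projections of $y$. Jensen's inequality then gives
\[
\int e^{-\langle u,y\rangle/\sigma^2}\,\ud Q\ \ge\ e^{-\langle u,\E Y\rangle/\sigma^2}=1
\]
for a fixed direction $u$. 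The maximum over $i$ is handled by $\max_i e^{a_i}\le\sum_i e^{a_i}$, or by applying Jensen to $\log(1/\pi_k)$: convexity gives $\int \pi_j\pi_k\,\ud Q\ge \exp\!\big(-\int \log(1/(\pi_j\pi_k))\,\ud Q\big)$. Here $\log(1/\pi_k)\le \log(1/\underline\alpha)+(2L+\tilde R^2/2)/\sigma^2+\sigma^{-2}\max_i\langle x_i-x_k,y\rangle$ plus a log-sum term. The expectation of $\max_i\langle x_i-x_k,y\rangle$ over $K$ directions is controlled by subGaussianity of the one-dimensional projections, which is dimension-free. Tracking constants through $\log\E e^{\lambda\langle u,Y\rangle}\le\lambda^2\|u\|^2\varepsilon^2/2$ with $\|u\|\le2\tilde R$ should produce the $8\tilde R^2\varepsilon^2/\sigma^2$ term in the exponent. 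Collecting the factors gives $\kappa$, up to the factor $\sigma^{-2}$ from the Hessian formula.
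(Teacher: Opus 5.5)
Your Step 1 is the paper's argument: the Hessian formula, the decomposition $\mathrm{diag}(\pi)-\pi\pi^\top=\sum_{j<k}\pi_j\pi_k(e_j-e_k)(e_j-e_k)^\top$, and $\sum_{j<k}(v_j-v_k)^2=K\|v\|^2$ on $\mathbf 1^\perp$. So everything reduces to a lower bound of the form $\int\pi_j\pi_k\,\ud Q\ge\underline\alpha^2e^{-(4L+\tilde R^2+8\tilde R^2\varepsilon^2/\sigma^2)/\sigma^2}$.

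\textbf{The gap is in Step 3.} Once you bound the $y$-dependent part of $\log(1/\pi_k)$ by $\sigma^{-2}\max_i\langle x_i-x_k,y\rangle$, you must control $\E\max_{i\le K}\langle x_i-x_k,Y\rangle$. The subGaussian maximal inequality only gives $2\tilde R\varepsilon\sqrt{2\log K}$, or $\sigma^2\log K+2\tilde R^2\varepsilon^2/\sigma^2$ via the MGF at $\lambda=\sigma^{-2}$. Replacing the max by an unweighted sum loses the same $\log K$. Your $\kappa$ then carries an extra factor $e^{-4\sqrt{2\log K}\,\tilde R\varepsilon/\sigma^2}$ or $K^{-2}$. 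The result is dimension-free but $K$-dependent, and for small $\tilde R\varepsilon/\sigma^2$ it is strictly smaller than the stated $\kappa$, so the proposition as stated does not follow.

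Your constant tracking also does not work. A single MGF bound with $\|u\|\le 2\tilde R$ at $\lambda=\sigma^{-2}$ gives $2\tilde R^2\varepsilon^2/\sigma^4$ per factor, not the $8\tilde R^2\varepsilon^2/\sigma^2$ term you said it ``should produce.'' In the paper, that term comes from one MGF bound in the combined direction $x_m+x_\ell-x_k-x_{k'}$, whose norm is at most $4\tilde R$.

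\textbf{The fix: never take a maximum, and keep the weights $\alpha_i$.} The paper uses Jensen for $x\mapsto 1/x$, giving $\int\pi_k\pi_{k'}\,\ud Q\ge 1/\E Z_{kk'}$ with $Z_{kk'}=\sum_{m,\ell}s_ms_\ell/(s_ks_{k'})$. Each summand is exponential-linear in $Y$, and the weights $\alpha_m\alpha_\ell$ sum to one, so $\E Z_{kk'}\le e^{M/\sigma^2}/(\alpha_k\alpha_{k'})$.

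Your log-Jensen route also closes if you use a second Jensen step (concavity of $\log$) instead of the max. Set $a_i(y)=(f_i-f_k+\tfrac12(\|x_k\|^2-\|x_i\|^2)+\langle x_i-x_k,y\rangle)/\sigma^2$. Then $\E\log(1/\pi_k)=\log(1/\alpha_k)+\E\log\sum_i\alpha_ie^{a_i(Y)}\le\log(1/\alpha_k)+\log\sum_i\alpha_i\E e^{a_i(Y)}\le\log(1/\alpha_k)+(2L+\tilde R^2/2+2\tilde R^2\varepsilon^2/\sigma^2)/\sigma^2$. Hence $\int\pi_j\pi_k\,\ud Q\ge\alpha_j\alpha_k e^{-(4L+\tilde R^2+4\tilde R^2\varepsilon^2/\sigma^2)/\sigma^2}$. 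This implies the proposition, with $4$ in place of $8$, and is free of both $d$ and $K$.
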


Note that the above bound depends on the norm $\lVert f\rVert_\infty$. As we will apply this bound to empirical quantities, we require estimates on this norm.  This is achieved with the following proposition

 \begin{proposition}\label{prop:potentialbound}
 Let  $f$ be the optimal entropic potential for the problem $(P,Q)$ satisfying \eqref{eq:gauge}. Then, with the shortcut $\lVert f\rVert_\infty=\lVert f\rVert_{L^\infty(P)}$, we have
\[
\lVert f\rVert_{\infty}
\;\le\;
\tilde{R}^2
\;+\;
\sigma^2 \log\!\left(\frac{1}{\underline{\alpha}}\right)
\;+\;
 \frac{2\varepsilon^2}{\sigma^2} \tilde{R}^2.
\]
\end{proposition}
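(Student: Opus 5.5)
We work with $Q$ centered, so $\|x_k\|\le\tilde R$ and $\E Y=0$. The plan is to get pointwise two-sided bounds on each $f_k$ from the first-order condition \eqref{eqn:fstar_from_gstar}, using the gauge $\sum_k\alpha_k f_k=0$. Write $f_k = -\sigma^2\ln\int e^{(g(y)-c(x_k,y))/\sigma^2}\ud Q(y)$. Comparing two atoms gives
\[
f_j - f_k = -\sigma^2 \ln \frac{\int e^{(g(y)-c(x_j,y))/\sigma^2}\ud Q}{\int e^{(g(y)-c(x_k,y))/\sigma^2}\ud Q}.
\]
Since $c(x_j,y)-c(x_k,y) = \tfrac12(\|x_j\|^2-\|x_k\|^2) - \langle x_j-x_k,y\rangle$, the differences of potentials are governed by the exponential tilt $e^{\langle x_j - x_k, y\rangle/\sigma^2}$ under the probability measure $\ud\mu_k\propto e^{(g-c(x_k,\cdot))/\sigma^2}\ud Q$. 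This is exactly the conditional $\pi(\cdot|x_k)$, because $p(x_k,y)=e^{(f_k+g-c)/\sigma^2}$.

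The tilted measure is not $Q$, so we first bound $f$ from one side directly, which sidesteps the tilt. By \eqref{eqn:gstar_from_fstar},
\[
e^{-g(y)/\sigma^2}=\sum_k\alpha_k e^{(f_k-c(x_k,y))/\sigma^2}\ge \alpha_j e^{(f_j-c(x_j,y))/\sigma^2}\quad\text{for each }j,
\]
so $e^{(g(y)-c(x_j,y))/\sigma^2}\le \alpha_j^{-1}e^{-f_j/\sigma^2}$. Plugging this into the formula for $f_j$ only gives the tautology $1=\int p(x_j,\cdot)\ud Q$, which is useless. Instead, use the marginal constraint in a more explicit form:
\[
1=\int p(x_j,y)\ud Q(y)=\int \frac{e^{(f_j-c(x_j,y))/\sigma^2}}{\sum_k \alpha_k e^{(f_k-c(x_k,y))/\sigma^2}}\ud Q(y).
\]

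For the \emph{upper bound} on $\max_j f_j$, take $j$ attaining the maximum. Bound the denominator above by $e^{f_j/\sigma^2}\sum_k\alpha_k e^{-c(x_k,y)/\sigma^2}$. Then use Jensen, via convexity of $t\mapsto 1/t$ or equivalently of $-\ln$, to write
\[
1\ge \int \frac{e^{-c(x_j,y)/\sigma^2}}{\sum_k\alpha_k e^{-c(x_k,y)/\sigma^2}}\ud Q\ge \int e^{\sum_k\alpha_k(c(x_k,y)-c(x_j,y))/\sigma^2}\ud Q.
\]
This bound does not involve $f$ at all, so this route fails. The correct move is to use the gauge instead.

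\textbf{Revised plan.} For the upper bound on $\max_j f_j$, lower-bound the denominator by the single term $\alpha_j e^{(f_j-c(x_j,y))/\sigma^2}$. This only yields $1\le 1/\alpha_j$, which is again useless. Use the denominator term from the minimizing atom $m$ instead. Since $f_m\le 0\le f_j$ by the gauge,
\[
1\le \alpha_m^{-1}e^{(f_j-f_m)/\sigma^2}\cdots .
\]
This has the wrong direction, so the approach needs a different starting point.

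The cleaner route uses the formula with $g$ and Jensen applied in the right direction. From \eqref{eqn:fstar_from_gstar} and Jensen,
\[
f_k\le \int \big(c(x_k,y)-g(y)\big)\ud Q(y),
\]
so $f_j - f_m \le \int (c(x_j,y)-g)\ud Q - f_m$. Symmetrically, \eqref{eqn:gstar_from_fstar} with $-\ln\sum_k\alpha_k e^{a_k}\ge -\max_k a_k$ gives
\[
g(y)\ge \min_k\big(c(x_k,y)-f_k\big)\ge \min_k c(x_k,y)-\max_k f_k .
\]
Combining these bounds $\max f$ by $\int(c(x_j,y)-\min_k c(x_k,y))\ud Q+\max_k f_k$, which is circular. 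To break the circularity, apply Jensen to $g$ with weights $\alpha$:
\[
g(y)\le \sum_k\alpha_k\big(c(x_k,y)-f_k\big)=\sum_k\alpha_k c(x_k,y),
\]
using the gauge. Hence
\[
f_j\ge -\sigma^2\ln\int e^{(\sum_k\alpha_kc(x_k,y)-c(x_j,y))/\sigma^2}\ud Q .
\]
The exponent is $\tfrac12\sum_k\alpha_k\|x_k\|^2-\tfrac12\|x_j\|^2+\langle x_j-\bar x,y\rangle$ with $\|x_j-\bar x\|\le2\tilde R$. SubGaussianity then gives $f_j\ge -\tilde R^2/2-2\varepsilon^2\tilde R^2/\sigma^2$.

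For the upper bound, use $g(y)\ge c(x_m,y)-f_m+\sigma^2\ln\alpha_m$, the single-term bound. This gives
\[
f_j\le \sigma^2\ln(1/\underline\alpha)+f_m-\sigma^2\ln\int e^{(c(x_m,y)-c(x_j,y))/\sigma^2}\ud Q .
\]
With $f_m\le0$ and the subGaussian bound, this yields $f_j\le \sigma^2\ln(1/\underline\alpha)+\tilde R^2+2\varepsilon^2\tilde R^2/\sigma^2$. The main obstacle is this careful choice of Jensen and single-term bounds, with the gauge used to eliminate $f$.
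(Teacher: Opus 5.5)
The upper-bound step has a genuine gap. The inequality $g(y)\ge c(x_m,y)-f_m+\sigma^2\ln\alpha_m$ is false. Discarding terms from $\sum_k\alpha_k e^{(f_k-c(x_k,y))/\sigma^2}$ makes the sum smaller and hence $g$ larger. So the single-term bound actually reads $g(y)\le c(x_m,y)-f_m-\sigma^2\ln\alpha_m$, and through \eqref{eqn:fstar_from_gstar} an upper bound on $g$ can only give lower bounds on $f$. The only general lower bound, $g(y)\ge\min_k\bigl(c(x_k,y)-f_k\bigr)$, is the circular one you already discarded.

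Consequently your inequality $f_j\le\sigma^2\ln(1/\underline\alpha)+f_m-\sigma^2\ln\int e^{(c(x_m,y)-c(x_j,y))/\sigma^2}\ud Q$ fails in general. Take $P=\frac12\delta_a+\frac12\delta_{-a}$ and $Q=\mathcal N(0,s^2I_d)$. The symmetry $y\mapsto -y$ and the gauge force $f_1=f_2=0$, while the right-hand side equals $\sigma^2\ln 2-2s^2\|a\|^2/\sigma^2$, which is negative once $s\|a\|$ is large. In that inequality the moment generating function also enters through $-\sigma^2\ln\int\cdots$, so the subGaussian upper bound on it points the wrong way.

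The repair is to use the valid single-term bound with the roles swapped. Insert $g(y)\le c(x_j,y)-f_j-\sigma^2\ln\alpha_j$, with $j$ the maximizer, into \eqref{eqn:fstar_from_gstar} for the minimizer $m$. This gives $f_m\ge f_j+\sigma^2\ln\alpha_j-\sigma^2\ln\int e^{(c(x_j,y)-c(x_m,y))/\sigma^2}\ud Q$. Since $c(x_j,y)-c(x_m,y)=\frac12(\|x_j\|^2-\|x_m\|^2)+\langle x_m-x_j,y\rangle$, subGaussianity now applies in the right direction. It yields $f_j\le f_m+\sigma^2\ln(1/\underline\alpha)+\frac12\tilde R^2+2\varepsilon^2\tilde R^2/\sigma^2$, and $f_m\le 0$ by the gauge. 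This is exactly the comparison inequality the paper uses, phrased there via $\tilde f=\frac12\|x\|^2-f$, symmetrized over pairs of atoms, then centered with the gauge.

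Your lower bound, by contrast, is correct and genuinely different from the paper's. Concavity of $\ln$ in \eqref{eqn:gstar_from_fstar} with weights $\alpha$, together with the gauge, gives $g\le\sum_k\alpha_k c(x_k,\cdot)$. Hence $f_j\ge-\tilde R^2/2-2\varepsilon^2\tilde R^2/\sigma^2$, with no $\log(1/\underline\alpha)$ term. Combined with the repaired upper bound, this proves the proposition with $\tilde R^2/2$ in place of $\tilde R^2$.

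Your Jensen idea can even be completed on the upper side. First, $f_j\le\int c(x_j,\cdot)\,\ud Q-\int g\,\ud Q$, and $\int g\,\ud Q=S(P,Q)$ under the gauge. The Donsker--Varadhan bound $S(P,Q)\ge\frac12\sum_k\alpha_k\|x_k\|^2+\frac12\E\|Y\|^2-\sigma^2\ln\E_{P\otimes Q}e^{\langle X,Y\rangle/\sigma^2}$ then yields $f_j\le\frac12\tilde R^2+\varepsilon^2\tilde R^2/(2\sigma^2)$. This removes the dependence on $\underline\alpha$ altogether.
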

This result extends the bound shown in \cite{mena2019statistical} in the subGaussian-to-subGaussian case. Specifically, from
\cite[Proposition 6]{mena2019statistical} it follows that (see Proposition \ref{prop:potentialboundd})
\begin{equation}\label{eq:potentialcanonicalbound} \lVert f\rVert_\infty
\;\lesssim\;
\tilde{R}^2 + d(\varepsilon^2 + \tilde{R}^2) + d^2(\varepsilon^2 + \tilde{R}^2)^2.\end{equation}
However, this bound still depends on $d$. As shown in the proof sketch below, using such a dimension-dependent bound would not be sufficient to establish the rate $n^{-1}$ in a dimension-free manner. We leave all details to the appendix.
\begin{proof}[Proof sketch of Theorem 1]
The proof combines two ingredients: a strong concavity property of the semi-dual objective and localized empirical process bounds controlling fluctuations of the empirical semi-dual. Specifically, by a standard M-estimator argument \cite{wellner2013weak} and Proposition \ref{prop:strong}
\[
\E \lVert f_n-f\rVert_{\infty}^2
\lesssim \kappa^{-1}(f_n,f)
\left(\Phi(f)-\Phi(f_n)\right),
\]
where $\kappa(f_n,f)$ is the one in Proposition \ref{prop:strong} that can be established uniformly over $f_n,f$. In turn, such uniformity arises from bounds on $\lVert f_n\rVert_\infty,\lVert f\rVert_\infty$. By Proposition \ref{prop:potentialbound}, $\lVert f_n\rVert_\infty,\lVert f\rVert_\infty,$ can be controlled in a dimension-independent way, albeit with dependence on a sample version $\tilde{\varepsilon}$ of the subGaussianity parameter $\varepsilon$, following the argument in \cite{mena2019statistical}. Using novel high-probability and moment bounds on such random $\tilde{\varepsilon}$ (Lemma \ref{lemma:sigmaprob} in the Appendix), we can ignore the low-probability event where $\tilde{\varepsilon}$ is large and therefore treat the quantity $\kappa^{-1}({f_n,f})$ as a deterministic and of the order
$$\kappa^{-1}(f_n,f)\approx
\exp\!\Big(
\tilde{R}^2(1+\varepsilon^2)
\Big),$$
where the above notation emphasizes dependence on $\tilde{R}^2$ and $\varepsilon^2$. Finally, the term $\Phi(f)-\Phi(f_n)$ is controlled at the $n^{-1}$ rate by careful localization arguments similar to the ones in \cite{pooladian2023minimax}.
\end{proof}

\section{Application: convergence of sample-based Sinkhorn-EM}
\label{sec:SEM}

We now apply the results of the previous section to obtain the first
finite-sample convergence guarantee for Sinkhorn-EM.

\subsection{Background on Sinkhorn-EM}

Expectation-maximization (EM) is a standard algorithm for latent-variable
models. For mixture models, its E-step computes the posterior responsibility of
each mixture component for each observation, while its M-step updates the model
parameters using these responsibilities. Sinkhorn expectation-maximization
(Sinkhorn-EM)
\citep{nejatbakhsh2020probabilistic,mena2020sinkhorn,
mena2026eotclustering} replaces the usual likelihood-based responsibilities
with those induced by an entropic optimal transport problem. In particular,
rather than assigning responsibilities to each observation independently,
Sinkhorn-EM enforces the prescribed mixture weights at the level of the
aggregate coupling.

Previous work has found that Sinkhorn-EM can converge faster or more reliably
than classical EM in some empirical settings
\citep{mena2020sinkhorn,mena2026eotclustering}. Its population objective can
also have a more favorable optimization geometry and avoid some undesirable
stationary-point configurations of the likelihood optimized by classical EM
\citep{mena2026eotclustering}. To the best of our knowledge, however, no
finite-sample convergence guarantee is available for the sample-based
Sinkhorn-EM algorithm. Our goal is not to provide a complete theoretical
explanation for its improved empirical behavior, but rather to take a first
step toward understanding it. In the same spirit as classical analyses of EM,
we study Sinkhorn-EM in a canonical Gaussian-mixture test bed for which the
sample-based and population iterations can be compared explicitly
\citep{xu2016globalEM,BalWaiYu17,weinberger2022algorithm}.

Specifically, we consider a two-component Gaussian mixture with known spherical
covariance:
\begin{equation}
\label{eq:mixture_of_gaussians}
Q^\ast
=
\alpha \mathcal N(\theta^\ast,\sigma^2 I_d)
+
(1-\alpha)\mathcal N(-\theta^\ast,\sigma^2 I_d),
\end{equation}
where $\theta^\ast\in\RR^d$ is the unknown signal and
$\alpha\in(0,1)$ and $\sigma^2$ are known. For a candidate parameter $\theta$,
define
\[
P_\theta
=
\alpha\delta_\theta+(1-\alpha)\delta_{-\theta}.
\]
Sinkhorn-EM minimizes the EOT objective
\begin{equation}
\Ll(\theta):=S(P_\theta,Q),
\end{equation}
where $Q=Q^\ast$ in the population problem and $Q=Q_n$ in the sample-based
problem. Since $Q^\ast$ is obtained by convolving $P_{\theta^\ast}$ with a
Gaussian of covariance $\sigma^2I_d$, the population objective
$S(P_\theta,Q^\ast)$ is minimized at $\theta^\ast$. The EOT regularization
parameter $\sigma^2$ therefore coincides with the Gaussian noise variance in
the mixture model.

We first describe classical EM in this model. Starting from
$\theta^0_{\mathrm{EM}}$, the population iterates satisfy
\[
\theta^{t+1}_{\mathrm{EM}}
=
F(\theta^t_{\mathrm{EM}},\alpha),
\]
where
\[
F(\theta,\alpha)
:=
\E_{Y\sim Q^\ast}
\left[
Y\bigl(2\Psi(Y,\theta,\alpha)-1\bigr)
\right]
\]
and
\[
\Psi(y,\theta,\alpha)
:=
\frac{
\alpha \exp\!\left(-\frac{\|y-\theta\|^2}{2\sigma^2}\right)
}{
\alpha \exp\!\left(-\frac{\|y-\theta\|^2}{2\sigma^2}\right)
+
(1-\alpha)\exp\!\left(-\frac{\|y+\theta\|^2}{2\sigma^2}\right)
}.
\]
Here $\Psi(y,\theta,\alpha)$ is the posterior responsibility of the component
centered at $\theta$ for the observation $y$.

Given i.i.d.\ observations
$Y_1,\ldots,Y_n\sim Q^\ast$, let
\[
Q_n=\frac1n\sum_{i=1}^n\delta_{Y_i}.
\]
The corresponding empirical EM map and iterates are
\[
F_n(\theta,\alpha)
:=
\frac1n\sum_{i=1}^n
Y_i\bigl(2\Psi(Y_i,\theta,\alpha)-1\bigr),
\qquad
\hat\theta^{t+1}_{\mathrm{EM}}
=
F_n(\hat\theta^t_{\mathrm{EM}},\alpha).
\]

Sinkhorn-EM uses the same update map but replaces the fixed weight $\alpha$
appearing in the responsibilities with a transport-corrected weight. Let
$f_n(\theta)=(f_{n,1}(\theta),f_{n,2}(\theta))\in\RR^2$ be an optimal
semi-dual potential for $S(P_\theta,Q_n)$, and define
\begin{equation}
\alpha_n(\theta)
:=
\frac{
\alpha\exp(f_{n,1}(\theta)/\sigma^2)
}{
\alpha\exp(f_{n,1}(\theta)/\sigma^2)
+
(1-\alpha)\exp(f_{n,2}(\theta)/\sigma^2)
}.
\end{equation}
The sample-based Sinkhorn-EM iterates are then
\begin{equation}\label{eq:sample-based-sem-iterates}
\hat\theta^{t+1}_{\mathrm{SEM}}
=
F_n\bigl(
\hat\theta^t_{\mathrm{SEM}},
\alpha_n(\hat\theta^t_{\mathrm{SEM}})
\bigr).
\end{equation}

Similarly, let
$f(\theta)=(f_1(\theta),f_2(\theta))\in\RR^2$ be an optimal semi-dual
potential for $S(P_\theta,Q^\ast)$, and define
\begin{equation}
\alpha(\theta)
:=
\frac{
\alpha\exp(f_1(\theta)/\sigma^2)
}{
\alpha\exp(f_1(\theta)/\sigma^2)
+
(1-\alpha)\exp(f_2(\theta)/\sigma^2)
}.
\end{equation}
The population Sinkhorn-EM iterates satisfy
\begin{equation}
\theta^{t+1}_{\mathrm{SEM}}
=
F\bigl(
\theta^t_{\mathrm{SEM}},
\alpha(\theta^t_{\mathrm{SEM}})
\bigr).
\end{equation}
Thus, in this two-component model, the difference between EM and Sinkhorn-EM
is the replacement of the fixed mixture weight $\alpha$ by the
transport-corrected weights $\alpha(\theta)$ and $\alpha_n(\theta)$.

\subsection{Finite-sample guarantee for Sinkhorn-EM}
\label{subsec:finite_sample_SEM}

We now specialize to the balanced symmetric model, corresponding to
\eqref{eq:mixture_of_gaussians} with $\alpha=1/2$. We prove that, for any fixed
number of iterations, sample-based Sinkhorn-EM tracks population Sinkhorn-EM at
the parametric $n^{-1/2}$ scale. Combining this statistical approximation with
local contraction of the population update gives an error bound consisting of
an optimization term and a statistical term.

Let $\rho:=\frac{\|\theta^\ast\|}{\sigma}$ denote the signal-to-noise ratio. There exists a universal constant $\eta_0$
such that, whenever $\rho\geq\eta_0$, the population EM operator is a
contraction on $B(\theta^\ast,\|\theta^\ast\|/4)$ with contraction factor $\kappa\leq e^{-c\rho^2}<1$ for a universal constant $c>0$ \citep{BalWaiYu17}. Define
\[
A_1
:=
\frac{\exp(2\rho^2)}
{\min\{\rho,1\}^2},
\qquad
A_2
:=
\frac{16C_{\theta^\ast,\sigma}^2}
{(1-\kappa)^2\|\theta^\ast\|^2},
\]
where $C_{\theta^\ast,\sigma}$ is the constant appearing in
\Cref{prop:sample-based-sem-iterates}. We have the following theorem. 

\begin{theorem}
\label{thm:convergence-sample-SEM}
Let $Q^\ast$ be as in \eqref{eq:mixture_of_gaussians} with
$\alpha=1/2$ and $\theta^\ast\neq0$, and let
$Y_1,\ldots,Y_n\overset{\mathrm{i.i.d.}}{\sim}Q^\ast$. Suppose that
$\rho\geq\eta_0$ and that the initialization $\hat\theta^0_{\mathrm{SEM}}$ of Sinkhorn-EM satisfies $\left\|
\hat\theta^0_{\mathrm{SEM}}-\theta^\ast
\right\|
\leq
\frac14\|\theta^\ast\|$. There exists a universal constant $C>0$ such that, if
\[
n
\geq
C\max_{j\in\{1,2\}}
\left\{
A_j
\left[
d\log(edA_j)+\log\frac1\delta
\right]
\right\},
\]
then, with probability at least
$1-\delta-n^{-c_1d}-c_2n^{-2}$, all the iterates of Sinkhorn-EM $\hat\theta^0_{\mathrm{SEM}}, ..., \hat\theta^T_{\mathrm{SEM}}$, defined in \eqref{eq:sample-based-sem-iterates}, remain in
$B(\theta^\ast,\|\theta^\ast\|/4)$ and satisfy
\[
\left\|
\hat\theta^t_{\mathrm{SEM}}-\theta^\ast
\right\|
\leq
\kappa^t
\left\|
\hat\theta^0_{\mathrm{SEM}}-\theta^\ast
\right\|
+
\frac{C_{\theta^\ast,\sigma}}{1-\kappa}
\sqrt{
\frac{d\log n+\log(1/\delta)}{n}
}
\]
for every $0\leq t\leq T$, where $c_1,c_2>0$ are constants depending
only on $\theta^\ast$ and $\sigma$.
\end{theorem}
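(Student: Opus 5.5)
The plan is the standard "contraction plus uniform deviation" argument for sample-based EM, following \citep{BalWaiYu17}. Write $M(\theta):=F(\theta,\alpha(\theta))$ for the population Sinkhorn-EM operator and $M_n(\theta):=F_n(\theta,\alpha_n(\theta))$ for its sample version. The argument has three steps, carried out in order.

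\emph{Step 1: population SEM equals population EM.} With $\alpha=1/2$ and $Q^\ast$ symmetric, $S(P_\theta,Q^\ast)$ is invariant under the swap $\theta\mapsto-\theta$ combined with $y\mapsto -y$. Hence the optimal potentials satisfy $f_1(\theta)=f_2(\theta)$. Under the gauge \eqref{eq:gauge} both vanish, so $\alpha(\theta)=1/2$. Therefore $M(\theta)=F(\theta,1/2)$ is exactly the population EM operator. On $B(\theta^\ast,\|\theta^\ast\|/4)$ it is a $\kappa$-contraction with fixed point $\theta^\ast$ when $\rho\geq\eta_0$, by \citep{BalWaiYu17}.

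\emph{Step 2: uniform deviation bound.} The goal, which I take to be the content of \Cref{prop:sample-based-sem-iterates}, is that with the stated probability,
\[
\sup_{\theta\in B(\theta^\ast,\|\theta^\ast\|/4)}\|M_n(\theta)-M(\theta)\|\leq C_{\theta^\ast,\sigma}\sqrt{\frac{d\log n+\log(1/\delta)}{n}} .
\]
Split the deviation as
\[
M_n(\theta)-M(\theta)=\bigl[F_n(\theta,\tfrac12)-F(\theta,\tfrac12)\bigr]+\bigl[F_n(\theta,\alpha_n(\theta))-F_n(\theta,\tfrac12)\bigr].
\]
\begin{itemize}
\item The first bracket is the classical EM deviation. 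It is bounded uniformly in $\theta$ by the argument of \citep{BalWaiYu17}: symmetrization, Lipschitzness of $\Psi$ in $\theta$, and an $\epsilon$-net of the ball. This gives the $\sqrt{(d\log(1/\delta)\ldots)/n}$ term and the sample-size requirement involving $A_1$.
\item For the second bracket, use that $\partial_\alpha\Psi$ is bounded by $1/(4\alpha(1-\alpha))$ near $\alpha=1/2$. Together with $\frac1n\sum\|Y_i\|\lesssim\|\theta^\ast\|+\sigma\sqrt d$ on a high-probability event, this gives
\[
\bigl\|F_n(\theta,\alpha_n(\theta))-F_n(\theta,\tfrac12)\bigr\|\lesssim(\|\theta^\ast\|+\sigma\sqrt d)\,|\alpha_n(\theta)-\tfrac12|\lesssim(\ldots)\,|f_{n,1}(\theta)-f_{n,2}(\theta)|/\sigma^2 .
\]
It then remains to control $\|f_n(\theta)-f(\theta)\|_\infty$ uniformly in $\theta$, which is a one-sample problem since $P_\theta$ is fixed.
\end{itemize}

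\emph{Step 3: iterate.} By induction on $t$, on the good event the iterate $\hat\theta^t$ stays in the ball, and
\[
\|\hat\theta^{t+1}-\theta^\ast\|\leq\|M(\hat\theta^t)-\theta^\ast\|+\|M_n(\hat\theta^t)-M(\hat\theta^t)\|\leq\kappa\|\hat\theta^t-\theta^\ast\|+\epsilon_n .
\]
Unrolling yields $\kappa^t\|\hat\theta^0-\theta^\ast\|+\epsilon_n/(1-\kappa)$. Staying inside the ball requires $\epsilon_n/(1-\kappa)\leq\|\theta^\ast\|/4$, i.e. $\epsilon_n\leq(1-\kappa)\|\theta^\ast\|/4$. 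This is exactly what the lower bound on $n$ involving $A_2$ ensures.

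\emph{Main obstacle: uniformity of the potential bound in $\theta$.} \Cref{cor:onesample} gives only an expectation bound at fixed $\theta$, and the dimension enters through $\|\hat\theta^t\|$ and the random data. My first attempt would not go through the corollary. Instead I would use the strong concavity of \Cref{prop:strong} directly: with $K=2$, $\underline\alpha=1/2$, and $L$ from \Cref{prop:potentialbound} applied with a high-probability sample subGaussian constant (Lemma~\ref{lemma:sigmaprob}), this gives
\[
\|f_n(\theta)-f(\theta)\|\leq\kappa_0^{-1}\,\|\nabla\Phi_n(f(\theta))-\nabla\Phi(f(\theta))\| .
\]
The gradient difference is an empirical average of bounded posterior weights, $\frac1n\sum_i\pi_\theta(x_1\mid Y_i)-\tfrac12$, minus its mean. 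Since $\pi_\theta(x_1\mid y)=\Psi(y,\theta,1/2)$ is Lipschitz in $\theta$, with constant $\lesssim\|y\|/\sigma^2$, a net argument over the ball plus Hoeffding bounds it by $\sqrt{(d\log n+\log(1/\delta))/n}$ uniformly. The resulting constant $\kappa_0^{-1}\sim e^{O(\rho^2)}$ is absorbed into $C_{\theta^\ast,\sigma}$ and $A_1$. The $n^{-c_1d}$ and $n^{-2}$ failure probabilities come from the events bounding $\max_i\|Y_i\|$ and the sample subGaussian constant.
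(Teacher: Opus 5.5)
Your Steps 1 and 3 match the paper. Symmetry forces $f_1(\theta)=f_2(\theta)=0$, so $\alpha(\theta)=\tfrac12$ and the population update is the EM operator. The induction and the basin condition $\epsilon_n\le(1-\kappa)\|\theta^\ast\|/4$ then go exactly as in the paper.

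The gap is in the second bracket of Step~2, which as written does not deliver the stated rate. You bound the $\alpha$-sensitivity of the \emph{empirical} update by $\frac1n\sum_i\|Y_i\|\asymp\|\theta^\ast\|+\sigma\sqrt d$. You then multiply by $\sup_{\theta\in\mathcal B}|\alpha_n(\theta)-\tfrac12|$, which uniformly over a $d$-dimensional ball is of order $\sqrt{d/n}$ up to logarithms. That is what your net argument gives, and it cannot be improved to $n^{-1/2}$. The product is
\[
\bigl(\|\theta^\ast\|+\sigma\sqrt d\bigr)\sqrt{\frac{d\log n+\log(1/\delta)}{n}}\;\asymp\;\sigma\, d\sqrt{\frac{\log n}{n}}\quad\text{for large } d,
\]
a factor $\sqrt d$ worse than the theorem. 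This factor cannot be absorbed into $C_{\theta^\ast,\sigma}$, and the basin condition would then require $n\gtrsim d^2\log n$. The loss comes from the triangle inequality $\|\frac1n\sum_iY_ic_i\|\le\frac1n\sum_i\|Y_i\||c_i|$, which ignores that the noise parts of the $Y_i$ point in random directions.

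The missing ingredient is the dimension-free $\alpha$-Lipschitz constant of the \emph{population} map. Since $\partial_\alpha\Psi(y,\theta,\alpha)$ depends on $y$ only through $\langle y,\theta\rangle$, Stein's lemma gives $L_{\mathcal B}:=\sup_{\theta\in\mathcal B,\,\alpha\in[\alpha_0,1-\alpha_0]}\|\partial_\alpha F(\theta,\alpha)\|\lesssim\|\theta^\ast\|$, as in the paper's proof of \Cref{prop:uniform-centered-sinkhorn-update}. To use it, route through $F$:
\[
F_n(\theta,\alpha_n)-F_n(\theta,\tfrac12)=\bigl[F_n-F\bigr](\theta,\alpha_n)-\bigl[F_n-F\bigr](\theta,\tfrac12)+\bigl[F(\theta,\alpha_n)-F(\theta,\tfrac12)\bigr].
\]
Bound the first two terms by an EM concentration inequality uniform in both $\theta\in\mathcal B$ and $\alpha\in[\alpha_0,1-\alpha_0]$. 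Uniformity in $\alpha$ is needed because $\alpha_n(\theta)$ is data dependent; this is \Cref{cor:data_dependent_alpha} combined with \Cref{lem:alpha_n_bound}. Bound the last term by $L_{\mathcal B}|\alpha_n(\theta)-\tfrac12|$.

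With this repair, your route is a valid and more direct alternative to the paper's. Because $\alpha(\theta)=\tfrac12$ exactly, a high-probability uniform bound on $|\alpha_n(\theta)-\tfrac12|$ suffices. You then need neither the paper's ghost-sample centering at $\E_{\mathcal D_n}F(\theta,\alpha_n(\theta))$ nor the one-sample barycentric-projection bound for its bias term.

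You can also bypass strong concavity of the empirical semi-dual, which needs direction-uniform control of the sample subGaussian constant. Set $G_n(\theta,\alpha)=\frac1n\sum_i\Psi(Y_i,\theta,\alpha)$ and $G=\E G_n$. The identities $G_n(\theta,\alpha_n(\theta))=\tfrac12=G(\theta,\tfrac12)$ and a lower bound $\lambda_{\mathcal B}$ on $\partial_\alpha G$ give $|\alpha_n(\theta)-\tfrac12|\le\sup_{\theta,\alpha}|G_n-G|/\lambda_{\mathcal B}$ directly.

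A minor point: since $\varepsilon^2\asymp\sigma^2+\|\theta^\ast\|^2$ here, the constant from \Cref{prop:strong} is $e^{O(\rho^2+\rho^4)}$, not $e^{O(\rho^2)}$. This only changes $C_{\theta^\ast,\sigma}$.
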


The first term is the optimization error inherited from contraction of the
population operator, while the second is the statistical error incurred by
replacing the population distribution with its empirical measure. In
particular, for any fixed number of iterations, the empirical iterates are
$n^{-1/2}$-consistent with their population counterparts. The dependence on
$n$ and $d$ matches the corresponding sample-based EM guarantee up to the
factor $\sqrt{\log n}$ and problem-dependent constants
\citep{BalWaiYu17}. The additional terms $n^{-c_1d}$ and $c_2n^{-2}$ in the
failure probability arise from the high-probability events used to control the
empirical subGaussian parameters and vanish polynomially with the sample size.

The first step in proving \Cref{thm:convergence-sample-SEM} is to show that
population Sinkhorn-EM coincides with classical EM in the balanced symmetric
model.

\begin{proposition}
[Population Sinkhorn-EM and EM iterates coincide for balanced symmetric
two-component Gaussian mixtures]
\label{prop:SEMcoincideEM}
Suppose that \eqref{eq:mixture_of_gaussians} holds with $\alpha=1/2$ and
$\theta^\ast\neq0$. Let $\theta^{t+1}_{\mathrm{SEM}}
=
F\bigl(
\theta^t_{\mathrm{SEM}},
\alpha(\theta^t_{\mathrm{SEM}})
\bigr)$ and $\theta^{t+1}_{\mathrm{EM}}
=
F(\theta^t_{\mathrm{EM}},1/2)$ denote the population Sinkhorn-EM and EM iterates, respectively. If the two
algorithms start from the same point,
$\theta^0_{\mathrm{SEM}}=\theta^0_{\mathrm{EM}}$, then $\theta^t_{\mathrm{SEM}}=\theta^t_{\mathrm{EM}}$
for every $t\geq0$.
\end{proposition}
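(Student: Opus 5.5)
The plan is to show that, in the balanced symmetric model, $\alpha(\theta)=1/2$ for every $\theta$. Then the two updates $F(\theta,\alpha(\theta))$ and $F(\theta,1/2)$ coincide as maps, and induction on $t$ gives $\theta^t_{\mathrm{SEM}}=\theta^t_{\mathrm{EM}}$ for all $t$. By definition, $\alpha(\theta)=1/2$ exactly when $f_1(\theta)=f_2(\theta)$, since the prefactors $\alpha=1-\alpha=1/2$ cancel. So it suffices to show that the optimal semi-dual potential for $S(P_\theta,Q^\ast)$ with $P_\theta=\tfrac12\delta_\theta+\tfrac12\delta_{-\theta}$ has equal coordinates. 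Equal coordinates is a gauge-invariant statement, because adding a constant to both coordinates preserves equality.

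The key structural fact is the symmetry $y\mapsto -y$. When $\alpha=1/2$, $Q^\ast=\tfrac12\mathcal N(\theta^\ast,\sigma^2I_d)+\tfrac12\mathcal N(-\theta^\ast,\sigma^2I_d)$ is invariant under $y\mapsto-y$. Also $c(\theta,-y)=c(-\theta,y)$ and $c(-\theta,-y)=c(\theta,y)$. Write the semi-dual \eqref{eq:phidisc} for $K=2$ with $x_1=\theta$, $x_2=-\theta$, $\alpha_1=\alpha_2=1/2$. Change variables $y\mapsto -y$ in the integral, using invariance of $Q^\ast$. This gives $\Phi(f_1,f_2)=\Phi(f_2,f_1)$: the linear term is symmetric, and the log-sum-exp term turns into itself with the roles of $f_1$ and $f_2$ swapped.

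Next I use uniqueness of the maximizer modulo constants. By Proposition~\ref{prop:strong}, $\Phi$ is strictly concave on the complement of $\mathbf 1$ (smallest nonzero eigenvalue of $-\nabla^2\Phi$ at least $\kappa>0$). The maximizer is therefore unique up to adding multiples of $\mathbf 1$, and is unique under the gauge \eqref{eq:gauge}. Existence of a bounded optimal potential follows from Proposition~\ref{prop:potentialbound}, or from the general strong duality recalled in Section~\ref{sec:notation}. Let $f=(f_1,f_2)$ be the gauged maximizer. Then $(f_2,f_1)$ is also a maximizer and also satisfies the gauge, since $\tfrac12 f_2+\tfrac12 f_1=0$. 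Uniqueness gives $f_1=f_2$, hence $\alpha(\theta)=1/2$.

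One edge case needs care: $\theta=0$. There the two atoms coincide, so $P_\theta$ is a single point mass. With the definition through the two-coordinate semi-dual, $\Phi$ depends on $f$ only through $\tfrac12(f_1+f_2)$ and the log term $\log\bigl(\tfrac12 e^{f_1/\sigma^2}+\tfrac12 e^{f_2/\sigma^2}\bigr)$. The symmetric choice $f_1=f_2$ is still optimal, and any convention giving $\alpha(0)=1/2$ suffices; moreover $F(0,\cdot)=\E[Y\cdot 0]$ gives $0$ regardless of the weight, since $2\Psi-1=0$ when $\theta=0$. So the iterates agree there as well. I expect the only real obstacle to be justifying uniqueness of the potential (handled by Proposition~\ref{prop:strong}) and stating this degenerate case cleanly; the rest is a direct symmetry computation.
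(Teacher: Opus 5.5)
Your proposal is correct and follows the paper's proof essentially step for step. The reflection $y\mapsto -y$ together with the symmetry of $Q^\ast$ gives $\Phi_\theta(f_1,f_2)=\Phi_\theta(f_2,f_1)$, and the gauge $\tfrac12 f_1+\tfrac12 f_2=0$ is swap-invariant. Uniqueness of the gauged potential then forces $f_1=f_2$, so $\alpha(\theta)=1/2$ and the two update maps coincide. Your appeal to Proposition~\ref{prop:strong} for uniqueness only makes explicit what the paper takes as known.

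There is one small slip, in your $\theta=0$ aside. There $2\Psi(y,0,\alpha)-1=2\alpha-1$, which vanishes only at $\alpha=1/2$. The correct reason that $F(0,\alpha)=0$ for every weight is that $\E_{Q^\ast}Y=0$. The aside is unnecessary anyway, since the symmetry-plus-uniqueness argument applies verbatim at $\theta=0$ in the two-coordinate formulation.
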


The proof uses the fact that, in this balanced symmetric model, the population
EM responsibilities already satisfy the marginal constraints imposed by EOT;
see \Cref{app:sinkhorn}. Consequently,
\Cref{prop:SEMcoincideEM} allows us to invoke existing contraction results for
population EM \citep{BalWaiYu17}. The sample-based algorithms do not generally
coincide, so the main step is to control the uniform statistical error
\[
\eta_n
:=
\sup_{\theta\in B(\theta^\ast,\|\theta^\ast\|/4)}
\left\|
F_n(\theta,\alpha_n(\theta))
-
F(\theta,\alpha(\theta))
\right\|.
\]

\begin{proposition}
[Uniform approximation of the sample-based Sinkhorn-EM update]
\label{prop:sample-based-sem-iterates}
Let $Q^\ast$ be as in \eqref{eq:mixture_of_gaussians} with
$\alpha=1/2$ and $\theta^\ast\neq0$. For each
$\theta\in B(\theta^\ast,\|\theta^\ast\|/4)$, let $\alpha(\theta)$ and
$\alpha_n(\theta)$ denote the population and empirical Sinkhorn weights.
There exist constants $C,c_1,c_2>0$, depending only on
$\sigma$ and $\|\theta^\ast\|$, such that, if
\[
n
\geq
C A_1
\left[
d\log(edA_1)+\log\frac1\delta
\right],
\]
then, with probability at least
$1-\delta-n^{-c_1d}-c_2n^{-2}$,
\[
\sup_{\theta\in B(\theta^\ast,\|\theta^\ast\|/4)}
\left\|
F_n(\theta,\alpha_n(\theta))
-
F(\theta,\alpha(\theta))
\right\|
\leq
C_{\theta^\ast,\sigma}
\sqrt{
\frac{d\log n+\log(1/\delta)}{n}
}.
\]
\end{proposition}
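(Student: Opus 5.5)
Split the error with the triangle inequality into an EM-type term and a weight-correction term:
\[
\bigl\|F_n(\theta,\alpha_n(\theta))-F(\theta,\alpha(\theta))\bigr\|
\le
\bigl\|F_n(\theta,\alpha_n(\theta))-F_n(\theta,\alpha(\theta))\bigr\|
+\bigl\|F_n(\theta,\alpha(\theta))-F(\theta,\alpha(\theta))\bigr\|.
\]
By \Cref{prop:SEMcoincideEM} (more precisely, its proof), in the balanced symmetric model $\alpha(\theta)=1/2$ for every $\theta$. The second term is therefore exactly the uniform deviation $\sup_\theta\|F_n(\theta,1/2)-F(\theta,1/2)\|$ of the classical sample EM operator. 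We would take this from the known bound of \citep{BalWaiYu17}, which holds on $B(\theta^\ast,\|\theta^\ast\|/4)$ at rate $\sqrt{(d\log n+\log(1/\delta))/n}$ once $n\gtrsim d\log(1/\delta)$. Alternatively, we would redo it with a covering of the ball and Lipschitz continuity of $\theta\mapsto Y(2\Psi-1)$, which holds because $|\partial_\theta\Psi|\lesssim\|Y\|/\sigma^2$, plus subGaussian concentration for vectors.

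For the first term, note that $\partial_\alpha\Psi(y,\theta,\alpha)=\Psi(1-\Psi)/(\alpha(1-\alpha))$. The mean value theorem then gives
\[
\|F_n(\theta,\alpha_n)-F_n(\theta,1/2)\|\le \frac{2}{\underline a(1-\overline a)}\,|\alpha_n(\theta)-1/2|\cdot\frac1n\sum_i\|Y_i\|,
\]
where $\alpha_n$ is confined to a compact subinterval of $(0,1)$. This confinement follows from Proposition~\ref{prop:potentialbound} applied to $(P_\theta,Q_n)$, since the empirical potentials are bounded. With $\alpha=1/2$, $\alpha_n(\theta)-1/2$ is a smooth function of $(f_{n,1}-f_{n,2})/\sigma^2$ with derivative at most $1/4$, and by symmetry $f_1(\theta)=f_2(\theta)$. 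Hence $|\alpha_n(\theta)-1/2|\le \|f_n(\theta)-f(\theta)\|_\infty/(2\sigma^2)$. The factor $\frac1n\sum\|Y_i\|$ is at most $\|\theta^\ast\|+2\sigma\sqrt d$ with probability $1-e^{-cn}$. To avoid an extra $\sqrt d$ we would instead project onto directions: the relevant quantity is $\|\frac1n\sum_i Y_i\,\Psi_i(1-\Psi_i)\|$, which concentrates around its $d$-independent mean at rate $\sqrt{d/n}$. This bound would then multiply a $O(1)$ potential error, or we simply accept the constant $C_{\theta^\ast,\sigma}$ absorbing that.

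The main step is therefore a \emph{uniform in $\theta$, high-probability} version of the one-sample bound in Corollary~\ref{cor:onesample}, namely
\[
\sup_\theta\|f_n(\theta)-f(\theta)\|_\infty\lesssim\sqrt{(d\log n+\log(1/\delta))/n}.
\]
We would prove it through strong concavity as follows. Proposition~\ref{prop:strong}, with $L$ from Proposition~\ref{prop:potentialbound} (empirical $\tilde\varepsilon$ controlled by Lemma~\ref{lemma:sigmaprob}, which produces the $n^{-c_1d}+c_2n^{-2}$ failure terms), gives
\[
\kappa\|f_n-f\|^2\lesssim\langle\nabla\Phi_n(f)-\nabla\Phi(f),f_n-f\rangle,
\]
using that $\nabla\Phi_n(f_n)=0$ and that $\Phi_n$ is strongly concave on the segment. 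Consequently $\|f_n-f\|\le\kappa^{-1}\|\nabla\Phi_n(f(\theta))-\nabla\Phi(f(\theta))\|$. This gradient is an empirical average of the bounded responsibilities $\alpha_k e^{(f_k-c(x_k,Y))/\sigma^2}/\sum_j(\cdot)$. Its supremum over $\theta$ in the ball is handled by an $\epsilon$-net of size $(C/\epsilon)^d$, Hoeffding's inequality, and Lipschitz continuity in $\theta$. The Lipschitz bound again costs $\|Y\|$ and uses smoothness of $\theta\mapsto f(\theta)$ via the implicit function theorem with the same $\kappa$. This yields the $d\log n$ term.

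The constant $\kappa^{-1}\approx e^{2\rho^2}$ explains $A_1$. The condition $n\ge CA_1[d\log(edA_1)+\log(1/\delta)]$ makes the resulting deviation small enough that $\alpha_n$ stays in the compact interval, so that the mean-value step above is legitimate. The hardest part is making the $\theta$-uniformity rigorous, in particular the Lipschitz control of the population potential $f(\theta)$ and of the empirical $\tilde\varepsilon$-dependent constants simultaneously over the net.
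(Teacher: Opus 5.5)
Your argument is correct in outline, but it takes a genuinely different route from the paper.

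\textbf{The two routes.} The paper expands around the \emph{empirical} weight $\alpha_n(\theta)$ and uses three terms:
\begin{itemize}
\item $F_n(\theta,\alpha_n)-F(\theta,\alpha_n)$, handled by a concentration bound uniform in both $\theta$ and the weight (Weinberger--Bresler together with Lemma~\ref{lem:alpha_n_bound});
\item the fluctuation $F(\theta,\alpha_n)-\E F(\theta,\alpha_n)$, handled by a ghost sample, the calibration identity $G_n(\theta,\alpha_n(\theta))=1/2$ and the population slope $\lambda_{\mathcal B}$;
\item the bias $\E F(\theta,\alpha_n)-F(\theta,1/2)$, handled by Cauchy--Schwarz and the one-sample backward-projection rate of Theorem~\ref{theo:barybound}.
\end{itemize}
You instead expand around the \emph{population} weight $\alpha(\theta)=1/2$. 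Your EM term is the paper's first term frozen at weight $1/2$. Your weight-correction term replaces both the ghost-sample step and the bias step with a Lipschitz-in-$\alpha$ estimate times a uniform high-probability bound on $|\alpha_n(\theta)-1/2|$. Your route is shorter and entirely high-probability, and it never needs Theorem~\ref{theo:barybound} or a basin-uniform version of its constant. The paper's route, in exchange, makes the EOT barycentric rate the engine of the Sinkhorn correction, which is the point of the application.

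\textbf{Points to tighten.}

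First, the $\sqrt d$ coming from $\frac1n\sum_i\|Y_i\|$ cannot be absorbed into $C_{\theta^\ast,\sigma}$, which must be $d$-free. So the projection fix is mandatory, not optional. Because the intermediate weight in the mean value step depends on $\theta$ and on the data, pointwise concentration of $\frac1n\sum_iY_i\Psi_i(1-\Psi_i)$ does not suffice. The deterministic bound $\|\frac1n\sum_iY_ih_i\|\le\max_i|h_i|\,\|\frac1n\sum_iY_iY_i^\top\|_{\mathrm{op}}^{1/2}$, valid for any bounded weights $h_i$, does the job. It gives $\sup_{\theta\in\mathcal B,\,a\in[\alpha_0,1-\alpha_0]}\|\partial_\alpha F_n(\theta,a)\|\lesssim(\sigma^2+\|\theta^\ast\|^2)^{1/2}$ with probability $1-e^{-cn}$ once $n\gtrsim d$.

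Second, the step you flag as hardest is easy.
\begin{itemize}
\item By the symmetry argument in Proposition~\ref{prop:SEMcoincideEM}, $f(\theta)\equiv 0$, so no implicit-function control of $\theta\mapsto f(\theta)$ is needed.
\item The gradient fluctuation at $f(\theta)=0$ is $\sup_\theta|G_n(\theta,1/2)-G(\theta,1/2)|$. This is a VC class, handled exactly as in Step~2 of Proposition~\ref{prop:uniform-centered-sinkhorn-update}.
\item More directly still, $G(\theta,1/2)=1/2=G_n(\theta,\alpha_n(\theta))$ together with the slope bound gives $|\alpha_n(\theta)-1/2|\le Z_n/\lambda_{\mathcal B}$, without Proposition~\ref{prop:strong}.
\end{itemize}
The direct slope argument also gives a better constant. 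Going through Proposition~\ref{prop:strong} with proxy $\varepsilon^2=\sigma^2+\|\theta^\ast\|^2$ yields $\kappa^{-1}\sim\exp(C\rho^2(1+\rho^2))$, not $e^{2\rho^2}$. This is harmless, since the constants may depend on $\sigma$ and $\|\theta^\ast\|$. However, it is the slope $\lambda_{\mathcal B}$, not $\kappa$, whose form $A_1$ mirrors.

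Finally, for your EM term, cite the Weinberger--Bresler bound at $\alpha=1/2$ (as the paper does), or use your covering argument, to get the additive $d\log n+\log(1/\delta)$ form. The Balakrishnan--Wainwright--Yu bound is stated multiplicatively in $d$ and $\log(1/\delta)$.
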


The proof follows the general sample-to-population strategy used in
finite-sample analyses of EM
\citep{BalWaiYu17,Dwivedi2018,weinberger2022algorithm}, with an additional term
arising from the estimation of the transport-corrected weight. Let
$\mathcal D_n=(Y_1,\ldots,Y_n)$ denote the observed sample. We decompose
\begin{align*}
F_n(\theta,\alpha_n(\theta))-F(\theta,\alpha(\theta))
&=
F_n(\theta,\alpha_n(\theta))
-
F(\theta,\alpha_n(\theta))
\\
&\quad+
F(\theta,\alpha_n(\theta))
-
\E_{\mathcal D_n}
\left[
F(\theta,\alpha_n(\theta))
\right]
\\
&\quad+
\E_{\mathcal D_n}
\left[
F(\theta,\alpha_n(\theta))
\right]
-
F(\theta,\alpha(\theta)).
\end{align*}

Write $\mathcal B:=B(\theta^\ast,\|\theta^\ast\|/4)$. For the first term,
\Cref{cor:data_dependent_alpha,lem:alpha_n_bound} imply that, with probability
at least $1-n^{-c_1d}-C_1'n^{-2}$,
\begin{equation}\label{eq:sem-first-term-bound}
\sup_{\theta\in\mathcal B}
\|F_n(\theta,\alpha_n(\theta))-F(\theta,\alpha_n(\theta))\|
\le C_1\sqrt{\frac{d\log n}{n}}.
\end{equation}
Here the concentration bound holds uniformly in both the parameter and the
weight, so it remains valid at the data-dependent weight $\alpha_n(\theta)$.
For the second term, \Cref{prop:uniform-centered-sinkhorn-update} gives, with
probability at least $1-\delta-C_2'n^{-2}$,
\begin{equation}\label{eq:sem-second-term-bound}
\sup_{\theta\in\mathcal B}
\left\|F(\theta,\alpha_n(\theta))
-\E_{\mathcal D_n}F(\theta,\alpha_n(\theta))\right\|
\le C_2\sqrt{\frac{d\log(en)+\log(1/\delta)}{n}}.
\end{equation}
The constants depend only on $\sigma$ and $\|\theta^\ast\|$; in particular,
one may take
$C_2=C\max\{\sigma,\|\theta^\ast\|\}\exp(C\rho^2)$,
where $C$ depends on the weight bound in \Cref{lem:alpha_n_bound}.
The third term is the bias of the population update evaluated at the empirical
weight. It is controlled by the one-sample backward-barycentric-projection
bound from \Cref{theo:barybound,cor:onesample}. To see this, let $Y\sim Q^\ast$ be independent of $\mathcal D_n$. Then
\begin{align*}
&\E_{\mathcal D_n}
\left[
F(\theta,\alpha_n(\theta))
\right]
-
F(\theta,\alpha(\theta))
\\
&\quad=
\E_{\mathcal D_n,Y}
\left[
Y
\left\{
\bigl(2\Psi(Y,\theta,\alpha_n(\theta))-1\bigr)
-
\bigl(2\Psi(Y,\theta,\alpha(\theta))-1\bigr)
\right\}
\right].
\end{align*}
Moreover, if $\back_n^\theta$ and $\back^\theta$ denote the backward
barycentric projections for $(P_\theta,Q_n)$ and
$(P_\theta,Q^\ast)$, respectively, then
\[
\back_n^\theta(Y)-\back^\theta(Y)
=
\theta
\left\{
\bigl(2\Psi(Y,\theta,\alpha_n(\theta))-1\bigr)
-
\bigl(2\Psi(Y,\theta,\alpha(\theta))-1\bigr)
\right\}.
\]
It follows that
\begin{align*}
&
\left\|
\E_{\mathcal D_n}
\left[
F(\theta,\alpha_n(\theta))
\right]
-
F(\theta,\alpha(\theta))
\right\|
\\
&\quad\leq
\frac{1}{\|\theta\|}
\left(\E\|Y\|^2\right)^{1/2}
\left(
\E
\left\|
\back_n^\theta(Y)-\back^\theta(Y)
\right\|^2
\right)^{1/2}.
\end{align*}
For $\theta\in B(\theta^\ast,\|\theta^\ast\|/4)$,
\[
\frac{1}{\|\theta\|}
\leq
\frac{4}{3\|\theta^\ast\|},
\qquad
\E\|Y\|^2
=
\|\theta^\ast\|^2+d\sigma^2.
\]
Furthermore, the one-sample version of \Cref{theo:barybound}, with
$P_\theta$ fixed, gives
\[
\E
\left\|
\back_n^\theta(Y)-\back^\theta(Y)
\right\|^2
\leq
\frac{C_{\mathrm{back}}}{n},
\]
where $C_{\mathrm{back}}$ can be chosen uniformly over the local basin and is
independent of $d$. Consequently,
\[
\left\|
\E_{\mathcal D_n}
\left[
F(\theta,\alpha_n(\theta))
\right]
-
F(\theta,\alpha(\theta))
\right\|
\leq
\frac{4\sqrt{C_{\mathrm{back}}}}{3}
\sqrt{
\frac{\|\theta^\ast\|^2+d\sigma^2}
{n\|\theta^\ast\|^2}
}
=
O\left(
\sqrt{\frac{1+d/\rho^2}{n}}
\right).
\]
The last display is uniform over $\mathcal B$ and is at most
$C_3\sqrt{d/n}$, since $d\ge1$ and $\rho>0$ is fixed. Combining it with
\eqref{eq:sem-first-term-bound} and \eqref{eq:sem-second-term-bound} by the
triangle inequality and a union bound gives, with probability at least
$1-\delta-n^{-c_1d}-c_2n^{-2}$,
\begin{align*}
\eta_n
&\le C_1\sqrt{\frac{d\log n}{n}}
   +C_2\sqrt{\frac{d\log(en)+\log(1/\delta)}{n}}
   +C_3\sqrt{\frac{d}{n}}\\
&\le C_{\theta^\ast,\sigma}
\sqrt{\frac{d\log n+\log(1/\delta)}{n}},
\end{align*}
where the last inequality uses $n\ge2$, and
$c_2=C_1'+C_2'$. This proves \Cref{prop:sample-based-sem-iterates}.
Combining this uniform update bound with population contraction and the
basin-stability argument in \Cref{app:sinkhorn} proves
\Cref{thm:convergence-sample-SEM}.

\section{Simulations}\label{sec:simulations}

\paragraph{\bf Experimental setup.}
We complement our convergence theorem with empirical simulations of the convergence of the empirical entropic backward barycentric
projection in a setting where the population map is available in closed form. We consider population measures of the form
\[
P=\frac1K\sum_{k=1}^K\delta_{x_k},
\qquad
Q=P\ast \mathcal N(0,\sigma^2 I_d).
\]
In the fixed-\(K=2\) experiments, the centers are chosen deterministically with first coordinates \(R\) and \(-R\) and the rest \(0\). When \(K\) is varied,
the centers are sampled uniformly from the Euclidean ball \(B(0,R)\), including in the \(K=2\) case. For this
pair \((P,Q)\), the population entropic optimizer has dual potential
\(f=0\), up to additive constants. Therefore, the population conditional probabilities of the discrete atoms given \(Y=y\) are given by
\[
\rho_k(y)
:=
\pi(X=x_k\mid Y=y)
=
\frac{\exp(-c(x_k,y)/\sigma^2)}
{\sum_{\ell=1}^K \exp(-c(x_\ell,y)/\sigma^2)},
\]
and the population backward barycentric projection is
\[
\back(y)=\sum_{k=1}^K x_k\rho_k(y).
\]

For each sample size \(n\), we draw independent samples
\(X_1,\dots,X_n\sim P\) and \(Y_1,\dots,Y_n\sim Q\),
and form the empirical measures \(P_n, Q_n\). The samples from \(P\) and \(Q\) are not coupled. We compute the empirical entropic backward barycentric projection \(\back_n\) by solving the empirical semi-dual
between \(P_n\) and \(Q_n\) and finding the optimal empirical potential \(f_n\). Given \(f_n=(f_{n,1},\ldots,f_{n,n})\), we define the empirical conditional probabilities, through the canonical extension, by
\[
\rho_{n,i}(y)
:=
\frac{\exp((f_{n,i}-c(X_i,y))/\sigma^2)}
{\sum_{j=1}^n \exp((f_{n,j}-c(X_j,y))/\sigma^2)},
\]
and
\[
\back_n(y)=\sum_{i=1}^n X_i\rho_{n,i}(y).
\]
We estimate
\[
\mathbb E\|\back-\back_n\|_{L^2(Q)}^2
=
\mathbb E\left[
\int \|\back(y)-\back_n(y)\|^2\,\ud Q(y)
\right]
\]
by Monte Carlo. For each repetition, we draw fresh training samples from
\(P^n\) and \(Q^n\), solve the empirical semi-dual, and approximate the
conditional \(L^2(Q)\) error using an independent evaluation sample
\(\widetilde Y_1,\dots,\widetilde Y_M\sim Q\):
\[
\widehat{\mathcal E}_n
=
\frac1M
\sum_{m=1}^M
\left\|
\back(\widetilde Y_m)-\back_n(\widetilde Y_m)
\right\|^2.
\]
We repeat this procedure \(20\) times and report the average of
\(\widehat{\mathcal E}_n\). Error bars correspond to standard errors across
repetitions. We use \(M=5000\) evaluation samples.

In all plots, \(d\in\{1,2,5,10,20\}\) and
\(n\in\{10,20,50,100,200,500,1000\}\). We perform three parameter sweeps: varying \(R\in\{0.01,0.1,1,10\}\) with
\(K=2\) and \(\sigma=0.5\), using deterministic antipodal atoms; varying
\(\sigma\in\{0.05,0.5,1,2\}\) with \(K=2\) and \(R=1\), using deterministic antipodal atoms; and varying \(K\in\{2,20,200,2000\}\) with
\(R=1\) and \(\sigma=0.5\), using atoms sampled uniformly from \(B(0,R)\). All results are reported on log-log plots, with \(n\) on the \(x\)-axis and the estimated squared error \(\mathbb E\|\back-\back_n\|_{L^2(Q)}^2\) on the \(y\)-axis.

\paragraph{\bf Results.}
Since the plots are in log-log scale, a decrease of one order of magnitude in error when \(n\) increases by one order of magnitude is visually consistent with an \(n^{-1}\) rate, while a decrease of roughly half an order of magnitude is consistent with an \(n^{-1/2}\) rate. Figure~\ref{fig:eot-combined} summarizes the three parameter sweeps. Overall, the \(R\)- and \(\sigma\)-sweeps indicate that the fast regime is most visible when the problem is well conditioned: for large \(R\) or small \(\sigma\), the observed decay is closer to \(n^{-1/2}\) over the tested range, whereas for smaller \(R\) or larger \(\sigma\), the curves become closer to \(n^{-1}\).

The top row of Figure~\ref{fig:eot-combined} varies \(R\). Larger \(R\) leads to larger errors, which is consistent with the geometric scaling of the two-atom problem. Indeed, since
\[
\back(y)=x_2+\rho_1(y)(x_1-x_2),
\]
an error in estimating \(\rho_1(y)\) produces squared barycentric error proportional to \(\|x_1-x_2\|^2=(2R)^2\).

The middle row varies \(\sigma\). Larger \(\sigma\) gives cleaner convergence because the conditional probabilities depend on \(\exp(-c/\sigma^2)\): small \(\sigma\) makes the map closer to a hard assignment rule, while larger \(\sigma\) smooths the weights and reduces sensitivity to empirical fluctuations.

The bottom row varies \(K\). Increasing \(K\) mainly increases the constants while preserving a similar qualitative decay in \(n\), suggesting that the lower-bound assumption on \(\underline\alpha=1/K\) may be conservative in these benign uniform examples, although the experiments do not rule out a genuine worst-case dependence.

\paragraph{\bf Main takeaway.}
Across the \(K\)-sweep, and in the better-conditioned regimes of the \(R\)- and \(\sigma\)-sweeps, the empirical decay is visually consistent with the theoretically predicted \(n^{-1}\)-like behavior.

\begin{figure}[t]
    \centering
    \includegraphics[width=\textwidth]{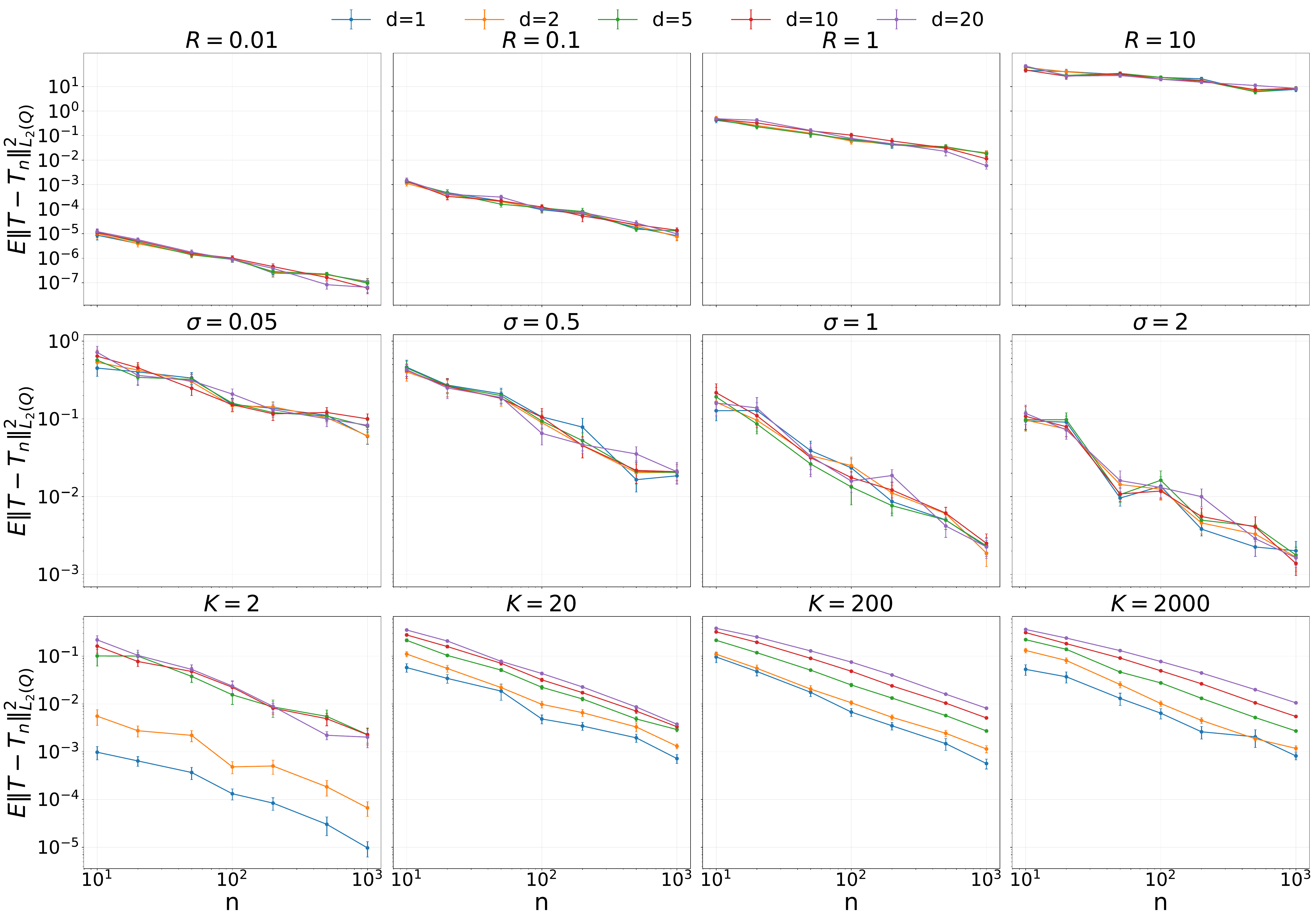}
    \caption{
    Estimated squared error \(\mathbb E\|\back-\back_n\|_{L^2(Q)}^2\) as a function of
    \(n\). Top row: varying \(R\in\{0.01,0.1,1,10\}\) with \(K=2\),
    \(\sigma=0.5\), and deterministic antipodal atoms. Middle row: varying
    \(\sigma\in\{0.05,0.5,1,2\}\) with \(K=2\), \(R=1\), and deterministic
    antipodal atoms. Bottom row: varying \(K\in\{2,20,200,2000\}\) with
    \(R=1\), \(\sigma=0.5\), and atoms sampled uniformly from \(B(0,R)\).
    Each curve corresponds to a different dimension \(d\).
    }
    \label{fig:eot-combined}
\end{figure}

\section{Conclusion and future work}
Our results provide further evidence of lower-complexity adaptation (LCA) in entropic optimal transport. We show that, when one marginal is discrete and the other is subGaussian, the empirical dual potentials and joint coupling densities achieve parametric squared-error rates of order $(n^{-1})$. The same rate holds for both barycentric projections, but with an intrinsic asymmetry: the projection onto the discrete marginal has a dimension-free leading constant, whereas the projection onto the subGaussian marginal carries a linear dependence on the ambient dimension through the moment of $Y$. Thus, noncompactness of one marginal alone does not force the slower rates known in fully subGaussian settings. Rather, in the semi-discrete regime, the statistical complexity of the coupling and of the projection onto the simpler marginal is governed primarily by the discrete measure.

Our Hessian lower bound in Proposition \ref{prop:strong} is inspired by \cite[Lemma 6]{sadhu2024stability}, which establishes a lower bound on the Hessian of the semi-dual functional in the unregularized case under a Poincaré assumption on the densities. However, our proof approach is substantially different; we don't rely on a Cheeger inequality-based bound but instead use a perhaps cruder one. As a result, downstream bounds have an exponential dependence on $R,\varepsilon$ and $\sigma^{-1}$. An open question is whether using more sophisticated spectral analysis machinery may yield an improved rate $\kappa$ and, therefore, improved constants downstream. Additionally, it is unclear whether the \(n^{-1/2}\) rates known for fully subGaussian settings are intrinsic or reflect limitations of current analyses. Finally, our Sinkhorn-EM analysis is limited to a well-specified, balanced, symmetric two-component Gaussian mixture under suitable initialization; extending it to broader mixture models and misspecified settings is an important direction.

\section{Acknowledgements}
GM is supported by NSF-DMS 2412895. We thank Tao Wang for help on a previous version of this work. We also thank Shayan Hundrieser for helpful discussions. Part of this work was done while TG was a visiting graduate student in the Federated and Collaborative Learning program at Simons Institute.

\bibliographystyle{abbrv}%{abbrvnat}
\bibliography{references}

@article{masud2023multivariate,
  title     = {Multivariate soft rank via entropy-regularized optimal transport: {S}ample efficiency and generative modeling},
  author    = {Masud, Shoaib Bin and Werenski, Matthew and Murphy, James M. and Aeron, Shuchin},
  journal   = {Journal of Machine Learning Research},
  volume    = {24},
  number    = {160},
  pages     = {1--65},
  year      = {2023}
}

@book{van2000asymptotic,
  title={Asymptotic statistics},
  author={Van der Vaart, Aad W},
  volume={3},
  year={2000},
  publisher={Cambridge university press}
}

@incollection{boucheron2003concentration,
  title={Concentration inequalities},
  author={Boucheron, St{\'e}phane and Lugosi, G{\'a}bor and Bousquet, Olivier},
  booktitle={Summer school on machine learning},
  pages={208--240},
  year={2003},
  publisher={Springer}
}

@inproceedings{pooladian2023minimax,
  title     = {Minimax estimation of discontinuous optimal transport maps: {T}he semi-discrete case},
  author    = {Pooladian, Aram-Alexandre and Divol, Vincent and Niles-Weed, Jonathan},
  booktitle = {International Conference on Machine Learning},
  pages     = {28128--28150},
  year      = {2023},
  organization = {PMLR}
}

@article{rigollet2025sample,
  title     = {On the sample complexity of entropic optimal transport},
  author    = {Rigollet, Philippe and Stromme, Austin J.},
  journal   = {The Annals of Statistics},
  volume    = {53},
  number    = {1},
  pages     = {61--90},
  year      = {2025}
}

@book{peyre2019computational,
  title     = {Computational Optimal Transport},
  author    = {Peyr{\'e}, Gabriel and Cuturi, Marco},
  publisher = {Foundations and Trends in Machine Learning},
  volume    = {11},
  number    = {5-6},
  pages     = {355--607},
  year      = {2019}
}

@book{villani2009optimal,
  title     = {Optimal Transport: Old and New},
  author    = {Villani, C{\'e}dric},
  publisher = {Springer},
  year      = {2009}
}

@article{hutter2021minimax,
  title     = {Minimax estimation of smooth optimal transport maps},
  author    = {H{\"u}tter, Jan-Christian and Rigollet, Philippe},
  journal   = {The Annals of Statistics},
  volume    = {49},
  number    = {2},
  pages     = {1166--1194},
  year      = {2021}
}

@inproceedings{cuturi2013sinkhorn,
  title     = {{S}inkhorn distances: {L}ightspeed computation of optimal transport},
  author    = {Cuturi, Marco},
  booktitle = {Advances in Neural Information Processing Systems},
  volume    = {26},
  year      = {2013}
}

@article{rosenthal1970subspaces,
  title={On the subspaces of L p (p> 2) spanned by sequences of independent random variables},
  author={Rosenthal, Haskell P},
  journal={Israel Journal of Mathematics},
  volume={8},
  number={3},
  pages={273--303},
  year={1970},
  publisher={Springer}
}

@article{sadhu2024stability,
  title={Stability and statistical inference for semidiscrete optimal transport maps},
  author={Sadhu, Ritwik and Goldfeld, Ziv and Kato, Kengo},
  journal={The Annals of Applied Probability},
  volume={34},
  number={6},
  pages={5694--5736},
  year={2024},
  publisher={Institute of Mathematical Statistics}
}

@article{weinberger2022algorithm,
  title={The EM algorithm is adaptively-optimal for unbalanced symmetric Gaussian mixtures},
  author={Weinberger, Nir and Bresler, Guy},
  journal={Journal of Machine Learning Research},
  volume={23},
  number={103},
  pages={1--79},
  year={2022}
}

@article{groppe2024lower,
  title={Lower complexity adaptation for empirical entropic optimal transport},
  author={Groppe, Michel and Hundrieser, Shayan},
  journal={Journal of Machine Learning Research},
  volume={25},
  number={344},
  pages={1--55},
  year={2024}
}

@article{mena2019statistical,
  title={Statistical bounds for entropic optimal transport: sample complexity and the central limit theorem},
  author={Mena, Gonzalo and Niles-Weed, Jonathan},
  journal={Advances in neural information processing systems},
  volume={32},
  year={2019}
}

@inproceedings{mena2026eotclustering,
  title={On model-based clustering with entropic optimal transport},
  author={Mena, Gonzalo},
year={2026},
      eprint={2605.03240},
      archivePrefix={arXiv},
      primaryClass={stat.ME},
      url={http://arxiv.org/abs/2605.03240}, 
}

@article{BalWaiYu17,
	Author = {Balakrishnan, Sivaraman and Wainwright, Martin J. and Yu, Bin},
	Doi = {10.1214/16-AOS1435},
	Fjournal = {The Annals of Statistics},
	Issn = {0090-5364},
	Journal = {Ann. Statist.},
	Mrclass = {62F10 (90C26 90C90)},
	Mrnumber = {3611487},
	Mrreviewer = {Jean-Fran\c{c}ois Dupuy},
	Number = {1},
	Pages = {77--120},
	Title = {Statistical guarantees for the {EM} algorithm: from population to sample-based analysis},
	Url = {https://doi.org/10.1214/16-AOS1435},
	Volume = {45},
	Year = {2017}}

@book{wellner2013weak,
  title={Weak convergence and empirical processes: with applications to statistics},
  author={Wellner, Jon and others},
  year={2013},
  publisher={Springer Science \& Business Media}
}

@article{xu2016globalEM,
  title={Global analysis of expectation maximization for mixtures of two gaussians},
  author={Xu, Ji and Hsu, Daniel J and Maleki, Arian},
  journal={Advances in Neural Information Processing Systems},
  volume={29},
  year={2016}
}

@inproceedings{hundrieser2024empirical,
  title={Empirical optimal transport between different measures adapts to lower complexity},
  author={Hundrieser, Shayan and Staudt, Thomas and Munk, Axel},
  booktitle={Annales de l'Institut Henri Poincare (B) Probabilites et statistiques},
  volume={60},
  number={2},
  pages={824--846},
  year={2024},
  organization={Institut Henri Poincar{\'e}}
}

@article{Dwivedi2018,
  title={Singularity, misspecification and the convergence rate of EM},
  author={Dwivedi, Raaz and Ho, Nhat and Khamaru, Koulik and Wainwright, Martin J and Jordan, Michael I and Yu, Bin},
 journal={The Annals of Statistics},
  volume={48},
  number={6},
  pages={3161--3182},
  year={2020}
}

@inproceedings{nejatbakhsh2020probabilistic,
  title={Probabilistic joint segmentation and labeling of c. elegans neurons},
  author={Nejatbakhsh, Amin and Varol, Erdem and Yemini, Eviatar and Hobert, Oliver and Paninski, Liam},
  booktitle={Medical Image Computing and Computer Assisted Intervention--MICCAI 2020: 23rd International Conference, Lima, Peru, October 4--8, 2020, Proceedings, Part V 23},
  pages={130--140},
  year={2020},
  organization={Springer}
}

@article{mena2020sinkhorn,
  title={Sinkhorn EM: An Expectation-Maximization algorithm based on entropic optimal transport},
  author={Mena, Gonzalo and Nejatbakhsh, Amin and Varol, Erdem and Niles-Weed, Jonathan},
  journal={arXiv preprint arXiv:2006.16548},
  year={2020}
}

@article{rose1998deterministic,
  title={Deterministic annealing for clustering, compression, classification, regression, and related optimization problems},
  author={Rose, Kenneth},
  journal={Proceedings of the IEEE},
  volume={86},
  number={11},
  pages={2210--2239},
  year={1998},
  publisher={IEEE}
}

@book{chewi2025statistical,
  title={Statistical optimal transport},
  author={Chewi, Sinho and Niles-Weed, Jonathan and Rigollet, Philippe},
  year={2025},
  publisher={Springer}
}

@inproceedings{daskalakis2017ten,
  title={Ten steps of EM suffice for mixtures of two Gaussians},
  author={Daskalakis, Constantinos and Tzamos, Christos and Zampetakis, Manolis},
  booktitle={Conference on Learning Theory},
  pages={704--710},
  year={2017},
  organization={PMLR}
}

@article{wu2021randomly,
  title={Randomly initialized EM algorithm for two-component Gaussian mixture achieves near optimality in O ($\sqrt{n}$) iterations.},
  author={Wu, Yihong and Zhou, Harrison H},
  journal={Mathematical Statistics \& Learning},
  volume={4},
  year={2021}
}

@inproceedings{genevay2019sample,
  title={Sample complexity of sinkhorn divergences},
  author={Genevay, Aude and Chizat, L{\'e}naic and Bach, Francis and Cuturi, Marco and Peyr{\'e}, Gabriel},
  booktitle={The 22nd international conference on artificial intelligence and statistics},
  pages={1574--1583},
  year={2019},
  organization={PMLR}
}

@article{cuturi2018semidual,
  title={Semidual regularized optimal transport},
  author={Cuturi, Marco and Peyr{\'e}, Gabriel},
  journal={SIAM Review},
  volume={60},
  number={4},
  pages={941--965},
  year={2018},
  publisher={SIAM}
}

@article{werenski2023estimation,
  title={Estimation of entropy-regularized optimal transport maps between non-compactly supported measures},
  author={Werenski, Matthew and Murphy, James M and Aeron, Shuchin},
  journal={arXiv preprint arXiv:2311.11934},
  year={2023}
}

%%%%%%%%%%%%%%%%%%%%%%%%%%%%%%%%%%%%%%%%%%%%%%%%%%%%%%%%%%%%

\appendix

\section{Proofs of main results in Section \ref{sec:sample_complexity}}\label{app:proofssec3}
\subsection{Preliminaries}
In this section we will repeatedly work with the following seminorms. For $u\in\mathbb{R}^K$ we define
  \begin{equation}\label{eq:varinf} \Var_\infty (u):=\inf_{c\in\mathbb{R}}\max_{k\in[K]}\lvert u_k-c\rvert^2=\frac{1}{4}\left(\max_{k\in[K]} u_k -\min_{k\in[K]} u_k\right)^2,\end{equation}
 
 \begin{equation} \label{eq:varalpha}\Var_\alpha (u):=\inf_{c\in\mathbb{R}}\sum_{k=1}^K \alpha_k\lvert u_k-c\rvert^2= \sum_{k=1}^K \alpha_k \left(u_k - \sum_{j=1}^K \alpha_j u_j \right)^2,\end{equation}
 and $\Var(u)=\Var_{\tilde{\alpha}}(u)$ with $\tilde{\alpha}_k=1/K$.

By working with $\Var_\infty(u)$ instead of $\lVert u\rVert_\infty^2$ we are able to handle with degeneracies arising from the fact that optimal potentials are only defined up to constant shifts. We will repeatedly use the following elementary relations among the above defined seminorms, as well as their relation with the usual $\lVert u\rVert_\infty^2$
\begin{lemma}\label{lemma:varinf}
Denote $\overline{\alpha}=\max_{k\in[K]}\alpha_k$. We have that
 \begin{equation} \label{eq:varbound1}K\underline{\alpha} \Var(u)\leq \Var_\alpha(u)\leq K\overline{\alpha}\Var(u)\end{equation} and
 \begin{equation}\label{eq:varbound2}\underline{\alpha} \Var_\infty(u)\leq \Var_\alpha (u)\leq \Var_\infty(u). \end{equation}
Further, 
\begin{equation}\label{eq:infityvar0} \Var_\infty(u)\leq \lVert u\rVert^2_\infty ,\end{equation}
and moreover, if
$$\min_{k\in[K]} u_k\leq 0 \leq \max_{k\in[K]} u_k,$$ then
 \begin{equation}\label{eq:infityvar} \lVert u\rVert^2_\infty\leq 4 \Var_\infty(u).\end{equation}
\end{lemma}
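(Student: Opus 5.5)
The plan is to treat the four inequalities as elementary facts about weighted variances and ranges of a vector $u\in\mathbb{R}^K$. We use the variational (infimum over $c$) forms in \eqref{eq:varinf} and \eqref{eq:varalpha} and compare integrands pointwise for a fixed constant $c$.

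\emph{Proof of \eqref{eq:varbound1}.} For each fixed $c$ we have
\[
\underline{\alpha}\sum_k |u_k-c|^2\le \sum_k\alpha_k|u_k-c|^2\le \overline{\alpha}\sum_k|u_k-c|^2 .
\]
We take the infimum over $c$ on all three sides and note that $\inf_c\sum_k|u_k-c|^2=K\Var(u)$, since $\Var$ carries the weights $1/K$. This gives $K\underline{\alpha}\Var(u)\le\Var_\alpha(u)\le K\overline{\alpha}\Var(u)$ directly.

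\emph{Proof of \eqref{eq:varbound2}.} The upper bound is immediate: $\sum_k\alpha_k|u_k-c|^2\le\max_k|u_k-c|^2$ because the weights sum to one, and we then take the infimum over $c$. For the lower bound, fix any $c$ and let $M=\max_k u_k$ and $m=\min_k u_k$. Then
\[
\sum_k\alpha_k|u_k-c|^2\ge \underline{\alpha}\bigl(|M-c|^2+|m-c|^2\bigr)\ge \underline{\alpha}\,\tfrac12 (M-m)^2 .
\]
When the indices attaining the max and the min coincide, $u$ is constant and both sides vanish, so that case is trivial. This yields $\Var_\alpha(u)\ge 2\underline{\alpha}\Var_\infty(u)\ge\underline{\alpha}\Var_\infty(u)$ by the closed form $\Var_\infty(u)=\tfrac14(M-m)^2$. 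That closed form itself we verify once: $\max_k|u_k-c|=\max(M-c,c-m)$ is minimized at the midpoint $c=(M+m)/2$. The closed form of $\Var_\alpha$ is the standard mean-minimizes-squared-loss identity.

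\emph{Proof of \eqref{eq:infityvar0} and \eqref{eq:infityvar}.} For \eqref{eq:infityvar0}, we choose $c=0$ in the infimum. For \eqref{eq:infityvar}, the sign condition $m\le 0\le M$ gives $\|u\|_\infty=\max(M,-m)\le M-m$, hence $\|u\|_\infty^2\le (M-m)^2=4\Var_\infty(u)$.

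There is no real obstacle here. The only points requiring care are the degenerate case in the lower bound of \eqref{eq:varbound2}, where the maximizing and minimizing indices coincide, and establishing the closed form of $\Var_\infty$ before it is used in the other bounds.
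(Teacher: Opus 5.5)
Your proof is correct: each inequality follows by comparing the objectives pointwise in $c$ and passing to the infimum, together with the midpoint closed form $\Var_\infty(u)=\tfrac14(\max_k u_k-\min_k u_k)^2$, and the sign condition gives $\lVert u\rVert_\infty\le \max_k u_k-\min_k u_k$. The paper calls this lemma elementary and gives no proof, so your argument supplies the missing routine verification. It even yields the sharper constant $2\underline{\alpha}$ in the lower bound of \eqref{eq:varbound2}, and with uniform weights this covers the consequence $\Var_\infty(u)\le K\Var(u)$ used in the proof of Corollary~\ref{cor:strong}.
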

Throughout the proofs, we will work in the centered coordinates. More precisely,
let \(\mu:=\mathbb E Y\). Since the quadratic cost is invariant under common
translations,
\[
    c(x,y)=c(x-\mu,y-\mu),
\]
the entropic optimal transport problem between \((P,Q)\) is equivalent to the
one between the translated measures
\[
    P^\mu:=\sum_{k=1}^K \alpha_k \delta_{x_k-\mu},
    \qquad
    Q^\mu:=\operatorname{Law}(Y-\mu).
\]
The measure \(Q^\mu\) is centered \(\varepsilon^2\)-subGaussian by assumption,
while \(P^\mu\) is supported in the ball of radius
\[
    \tilde{R}:=\max_{k\in[K]}\|x_k-\E(Y)\|
    \le R+\|\E Y\|.
\]
Thus, without loss of generality, the proofs are carried out under the
normalization \(\mathbb E Y=0\). Therefore, in what follows, we will assume that $Y$ is centered. To account for this, we write all bounds using $\tilde{R}$ instead of $R$ in the main text.

\subsection{Proof of Proposition \ref{prop:strong}}

\begin{proof}

Define $\Tilde{\Phi}:=-\Phi$. We will bound the derivatives of this function instead. By differentiating \eqref{eq:phidisc} with respect to coordinates $f_k$ we have
$$\nabla \tilde{\Phi}(f)=\int \omega(y) \ud Q(y)-\alpha$$
where
\[
\omega_k(y) := \frac{s_k(y)}{\sum_{k'=1}^K s_{k'}(y)},\qquad
s_k(y) := \alpha_k e^{(f_k-c(x_k,y))/\sigma^2}.
\]
Differentiating once more yields
\[
\nabla^2 \tilde{\Phi}(f)
=
\frac{1}{\sigma^2} \int \big(\mathrm{Diag}(\omega(y)) - \omega(y)\omega(y)^\top \big)\, \ud Q(y).
\]

Note that for each $y$ the following identity holds for $\omega=\omega(y)$
\[
\mathrm{Diag}(\omega) - \omega\omega^\top
=
\sum_{1 \le k < k' \le K} \omega_k \omega_{k'} (e_k - e_{k'})(e_k - e_{k'})^\top,
\]
where $e_k$ is the $k$-th unit vector. Therefore,
\[
\nabla^2 \tilde{\Phi}(f)
=
\frac{1}{\sigma^2} \sum_{k<k'} a_{kk'} (e_k - e_{k'})(e_k - e_{k'})^\top,
\quad \text{where} \quad 
a_{kk'} := \int \omega_k(y)\omega_{k'}(y)\, \ud Q(y).
\]

Consequently, since for any $u\in\mathbb{R}^K$ we have
$$u^\top (e_k-e_{k'})(e_k-e_{k'})^\top u= (u_k-u_{k'})^2,$$ we get the following lower bound 
\begin{equation*}
u^\top \nabla^2 \tilde{\Phi}(f) u
=
\frac{1}{\sigma^2} \sum_{k<k'} a_{kk'} (u_k - u_{k'})^2
\ge
\frac{1}{\sigma^2} a_* \sum_{k<k'} (u_k - u_{k'})^2,
\end{equation*}
where $a_* := \min_{k<k'} a_{kk'}$. Now, take $v \in \mathbf{1}^\perp\subseteq\mathbb{R}^K$, i.e.\ $\sum_k v_k = 0$. Then, we have that
\begin{eqnarray*}
\sum_{k<k'} (v_k - v_{k'})^2 &=& \frac{1}{2}\left[\sum_{k',k} v_k^2 -2\sum_{k'} v_{k'}\sum_{k}  v_k +\sum_{k',k} v_{k'}^2\right]  \\&=& 
K \|v\|^2.
\end{eqnarray*}
Therefore, for such $v \in \mathbf{1}^\perp$
\begin{equation}\label{eq:lowerlambda}
v^\top \nabla^2 \tilde{\Phi}(f) v \ge \frac{1}{\sigma^2} K a_* \|v\|^2.
\end{equation}
It remains to bound $a_{kk'}$. Define
\[
Z_{kk'}(y) := \frac{\big(\sum_{m} s_m(y)\big)^2}{s_k(y)s_{k'}(y)}.
\]
Then
\[
\omega_k(y)\omega_{k'}(y) = \frac{1}{Z_{kk'}(y)}.
\]
By Jensen's inequality,
\[
a_{kk'}
=
\mathbb{E}\!\left[\frac{1}{Z_{kk'}(Y)}\right]
\ge
\frac{1}{\mathbb{E}[Z_{kk'}(Y)]}.
\]

Now
\[
Z_{kk'}(Y)
=
\sum_{m,\ell} \frac{s_m(Y)s_\ell(Y)}{s_k(Y)s_{k'}(Y)}.
\]
For the cost $c(x,y)=\lVert x-y\rVert^2/2$, since the quadratic terms in $Y$ cancel out and we can express
\begin{eqnarray*}
\frac{s_m(Y)s_\ell(Y)}{s_k(Y)s_{k'}(Y)}
=
\frac{\alpha_m \alpha_\ell}{\alpha_k \alpha_{k'}}
\exp\!\Big(\left[
A_{kk'}^{m\ell}
- \tfrac{1}{2}B_{kk'}^{m\ell}
+  \langle Y, C_{kk'}^{m\ell} \rangle\right]/\sigma^2
\Big),
\end{eqnarray*}
where \begin{eqnarray*} A_{kk'}^{m\ell} &=& f_m + f_\ell - f_k - f_{k'},\\
B_{kk'}^{m\ell} &=& \|x_m\|^2 + \|x_\ell\|^2 - \|x_k\|^2 - \|x_{k'}\|^2,\\
C_{kk'}^{m\ell} &=& x_m + x_\ell - x_k - x_{k'}.
\end{eqnarray*}

Using subGaussianity, for any $w\in\mathbb{R}^d$
$$
\mathbb{E} e^{\langle Y, w \rangle/\sigma^2}
\le
\exp\!\Big(
  \tfrac{\varepsilon^2}{2\sigma^4} \|v\|^2
\Big).
$$

Using $|f_k| \le L$ and $\|x_k\| \le R$, we bound
\[
A_{kk'}^{m\ell}  \le 4L,
\quad 
-B_{kk'}^{m\ell} \le 2R^2,
\quad 
\|C_{kk'}^{m\ell}\| \le 4R.
\]
Hence,
\[
\mathbb{E}\!\left[\frac{s_m(Y)s_\ell(Y)}{s_k(Y)s_{k'}(Y)}\right]
\le
\frac{\alpha_m \alpha_\ell}{\alpha_k \alpha_{k'}}
\exp(M/\sigma^2),
\]
with
\[
M:= 4 L +  R^2  +  \frac{8\varepsilon^2}{\sigma^2} R^2.
\]

Summing over $m,\ell$ gives
\[
\mathbb{E}[Z_{kk'}(Y)]
\le
\frac{e^{M/\sigma^2}}{\alpha_k \alpha_{k'}}.
\]
Therefore
\[
a_{kk'} \ge \alpha_k \alpha_{k'} e^{-M/\sigma^2} \ge \underline{\alpha}^2 e^{-M/\sigma^2}.
\]

Combining with the bound in \eqref{eq:lowerlambda} yields
\begin{equation}\label{eq:boundortho}
v^\top \nabla^2 \tilde{\Phi}(f) v
\ge
\frac{K \underline{\alpha}^2 e^{-M/\sigma^2}}{\sigma^2} \|v\|^2,
\end{equation}
if $v \in \mathbf{1}^\perp$. Additionally, note that if $v \in \mathbf{1}$, $v=c\mathbf{1}$ for $c\in \mathbb{R}$ and so 
\begin{equation*}
v^\top \nabla^2 \tilde{\Phi}(f) v
=
\frac{1}{\sigma^2} \sum_{k<k'} a_{kk'} (c - c)^2 = 0
\end{equation*}
Finally, take an arbitrary $u\in\mathbb{R}^K$. Then, $u=u-S u +S u$ where \begin{equation} \label{eq:pperp} S:=\Big(I_K - \tfrac{1}{K}\mathbf{1}\mathbf{1}^\top\Big)\end{equation}
is the projection onto the orthogonal complement of the space spanned by $\mathbf{1}$, and satisfies $S=S^2$. Then, taking $v=Su$ and using that by the above observation $ \nabla^2 \tilde{\Phi}(f)(u-Su)=0$, we obtain
\begin{eqnarray*} u^\top \nabla^2 \tilde{\Phi}(f) u
&\geq&
\frac{K \underline{\alpha}^2 e^{-M/\sigma^2}}{\sigma^2} \|Su\|^2\\
&\geq &  u^\top \left[\frac{K \underline{\alpha}^2 e^{-M/\sigma^2}}{\sigma^2}\Big(I_K - \tfrac{1}{K}\mathbf{1}\mathbf{1}^\top\Big) \right] u,
\end{eqnarray*}
which yields the desired conclusion.
\end{proof}

The following is a direct consequence of Proposition \ref{prop:strong}, and will lead to useful upper bounds for the  $f_n-f$ in the proof of Theorem \ref{teo:potentialbound}
\begin{corollary}\label{cor:strong} Let $f\in\mathbb{R}^K$ be arbitrary and $f^\ast\in\mathbb{R}^K$ be an optimal potential for $(P,Q)$. Suppose that $\lVert f\rVert_\infty \leq L$ and $\lVert f^\ast\rVert_\infty \leq L$.  Then,
$$\Var_\infty(f-f^\ast) \leq \frac{2}{\kappa}\left(\Phi(f^\ast)-\Phi(f)\right),$$
where $\kappa$ is as in $\Cref{prop:strong}$
\end{corollary}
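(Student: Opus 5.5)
We argue by a second-order Taylor expansion of $\Phi$ along the segment from $f^\ast$ to $f$, using the Hessian bound of Proposition~\ref{prop:strong} along the whole path. Set $h:=f-f^\ast$ and $f_t:=f^\ast+th$ for $t\in[0,1]$. By convexity of the $\ell^\infty$ ball, $\|f_t\|_\infty\le (1-t)\|f^\ast\|_\infty+t\|f\|_\infty\le L$. Hence Proposition~\ref{prop:strong} applies at every $f_t$ with the same constant $\kappa$, giving
\[
-h^\top\nabla^2\Phi(f_t)h\ge \kappa\, h^\top S h=\kappa\|Sh\|^2,
\]
where $S=I_K-\tfrac1K\mathbf 1\mathbf 1^\top$ is the projection from \eqref{eq:pperp}. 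The map $\Phi$ is $C^2$ on $\mathbb{R}^K$; differentiation under the integral is justified by subGaussianity of $Q$, since the weights $\omega_k$ are bounded by $1$.

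Since $f^\ast$ maximizes the concave function $\Phi$ over $\mathbb{R}^K$, we have $\nabla\Phi(f^\ast)=0$. Taylor's formula with integral remainder then gives
\[
\Phi(f^\ast)-\Phi(f)=\int_0^1(1-t)\,\bigl(-h^\top\nabla^2\Phi(f_t)h\bigr)\,\ud t\ \ge\ \frac{\kappa}{2}\|Sh\|^2 .
\]

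It remains to compare $\|Sh\|^2$ with $\Var_\infty(h)$. Write $Sh=h-\bar h\mathbf 1$, where $\bar h=\tfrac1K\sum_k h_k$. By definition \eqref{eq:varinf}, taking $c=\bar h$ in the infimum,
\[
\Var_\infty(h)\le \max_k|h_k-\bar h|^2\le \sum_k|h_k-\bar h|^2=\|Sh\|^2 .
\]
Combining the two displays yields $\Var_\infty(f-f^\ast)\le \frac{2}{\kappa}\bigl(\Phi(f^\ast)-\Phi(f)\bigr)$, as claimed.

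The only point needing care is that the Hessian bound must hold uniformly along the segment. This is exactly why both endpoints are assumed to satisfy $\|\cdot\|_\infty\le L$, and why the segment stays inside the $\ell^\infty$ ball. The rest is routine.
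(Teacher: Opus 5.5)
Your proof is correct and follows essentially the same route as the paper: a Taylor expansion with integral remainder along $f_t=f^\ast+t(f-f^\ast)$, stationarity of $f^\ast$, and the Hessian bound of Proposition~\ref{prop:strong} applied uniformly because the segment stays in the $\ell^\infty$ ball of radius $L$. The only cosmetic difference is the last step. You bound $\Var_\infty(h)\le\|Sh\|^2$ directly by taking $c=\bar h$, whereas the paper writes $h^\top S h=K\Var(h)$ and invokes \eqref{eq:varbound2} with uniform weights.
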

\begin{proof}
Define $f_t= f^\ast +t(f-f^\ast)$. Note that $\lVert f_t\rVert_\infty \leq L$. By a second-order Taylor expansion with integral remainder, and using that $f^\ast$ is a stationary point of $\Phi$:
\begin{eqnarray*}  
\Phi(f)-\Phi(f^\ast)&=&\nabla \Phi(f^\ast)^\top (f-f^\ast)+\int_0^1(1-t)(f-f^\ast)^\top \nabla^2 \Phi(f_t)(f-f^\ast)dt\\ 
&=&\int_0^1(1-t)(f-f^\ast)^\top \nabla^2 \Phi(f_t)(f-f^\ast)dt\\
&\leq &-\kappa \int_0^1(1-t)(f-f^\ast)^\top S^\top S (f-f^\ast)dt\\
&=&-\frac{\kappa}{2}(f-f^\ast)^\top S  (f-f^\ast),\end{eqnarray*}
where $S$ is as in \eqref{eq:pperp}.
Now, note that for $u\in\mathbb{R}^K$,
\begin{eqnarray}
u^\top Su =\sum_{k=1}^K u^2_k-\frac{1}{K} \left(\sum_{k=1}^K u_k\right)^2=K\Var (u).
\end{eqnarray}
Therefore, by the above, and \eqref{eq:varbound2},
$$\Var_\infty(f-f^\ast)\leq K\Var(f-f^\ast)\leq \frac{2}{\kappa}\left(\Phi(f^\ast)-\Phi(f)\right).$$
\end{proof}
\subsection{Proof of Proposition \ref{prop:potentialbound}}

 \begin{proof}
Define
\[
\tilde f(x) := \frac{1}{2}\|x\|^2 - f(x).
\]

From the fact that
\[
-\frac{1}{2}\|x-y\|^2
=
-\frac{1}{2}\|x\|^2 + \langle x,y\rangle - \frac{1}{2}\|y\|^2,
\]
we obtain
\[
\tilde f(x)
=
\sigma^2 \log \int
\exp\!\left(
\frac{\langle x,y\rangle + g(y) - \frac{1}{2}\|y\|^2}{\sigma^2}
\right)
\ud Q(y).
\]

Additionally, since
\[
g(y)
=
-\sigma^2 \log \sum_{k=1}^K \alpha_k
\exp\!\left(
\frac{f(x_k) - \frac{1}{2}\|x_k-y\|^2}{\sigma^2}
\right),
\]
we have for any fixed $k$,
\[
g(y)
\le
-\left(f(x_k) - \frac{1}{2}\|x_k-y\|^2\right)
- \sigma^2 \log \alpha_k.
\]

Consequently, plugging this into the definition of $\tilde{f}$ we get that for each $x\in\mathbb{R}^d$
\begin{eqnarray*}
\tilde f(x)
&\le&
\sigma^2 \log \int
\exp\!\left(
\frac{\langle x,y\rangle + \frac{1}{2}\|x_k-y\|^2 - f(x_k) - \sigma^2 \log \alpha_k - \frac{1}{2}\|y\|^2}{\sigma^2}
\right)
\ud Q(y)\\
&\le & \sigma^2 \log \int
\exp\!\left(
\frac{\langle x,y\rangle + \frac{1}{2}\|x_k\|^2 +\frac{1}{2}\|y\|^2 -\langle x_k,y\rangle- f(x_k) - \sigma^2 \log \alpha_k - \frac{1}{2}\|y\|^2}{\sigma^2}
\right)
\ud Q(y) \\
&\le& \sigma^2 \log \int
\exp\!\left(
\frac{\langle x-x_k,y\rangle + \frac{1}{2}\|x_k\|^2 - f(x_k) - \sigma^2 \log \alpha_k}{\sigma^2}
\right)
\ud Q(y)\\ 
&\le& \tilde{f}(x_k)- \sigma^2 \log \alpha_k+\sigma^2 \log \int
\exp\!\left(
\frac{\langle x-x_k,y\rangle}{\sigma^2}
\right)
\ud Q(y)\\
&\le & \tilde f(x_k)
- \sigma^2 \log \alpha_k
+
\frac{\varepsilon^2}{2\sigma^2}\|x-x_k\|^2,
\end{eqnarray*}
where, in the last inequality, we used the subGaussianity of $Q$. Taking $x=x_{k'}$ and symmetrizing yields
\[
|\tilde f(x_k)-\tilde f(x_{k'})|
\le
\sigma^2 \log\!\left(\frac{1}{\underline{\alpha}}\right)
+
\frac{\varepsilon^2}{2\sigma^2}\|x_k-x_{k'}\|^2.
\]

Since $\|x_k-x_{k'}\|\le 2R$,
\[
|\tilde f(x_k)-\tilde f(x_{k'})|
\le
\underbrace{\sigma^2 \log\!\left(\frac{1}{\underline{\alpha}}\right)
+
2\frac{\varepsilon^2}{\sigma^2}R^2}_{B}.
\]
Now, let
\[
m := \sum_{k=1}^K \alpha_k \tilde f(x_k).
\]
From the gauge condition $\sum_{k=1}^K \alpha_k f(x_k)=0$ it follows that
\begin{equation}\label{eq:mbound}
m=\sum_{k=1}^K \alpha_k \tilde f(x_k)
=
\frac{1}{2}\sum_{k=1}^K \alpha_k \|x_k\|^2\le \frac{1}{2}R^2,
\end{equation}

Now, define $\hat f(x_k) := \tilde f(x_k) - m$. Note that \begin{equation}\label{eq:amplitude}\max_{k} \hat{f}(x_k)-\min_{k}  \hat{f}(x_k)\leq \sup_{k,k'}|\hat f(x_k)-\hat f(x_{k'})|=\sup_{k,k'}|\tilde f(x_k)-\tilde f(x_{k'})|\leq B.\end{equation}
Additionally, since $\sum_k \alpha_k \hat f(x_k)=0$, we have that $\max_{k} \hat{f}(x_k)\geq 0$ and $\min_{k} \hat{f}(x_k)\leq 0$, so by \eqref{eq:amplitude}
\begin{equation}\label{eq:fhat}
|\hat f(x_k)|\le 
\sigma^2 \log\!\left(\frac{1}{\underline{\alpha}}\right)
+
2\frac{\varepsilon^2}{\sigma^2}R^2.
\end{equation}

Now, from the fact that
\[
f(x_k)
=
\frac{1}{2}\|x_k\|^2 - \tilde f(x_k)
=
\frac{1}{2}\|x_k\|^2 - \hat f(x_k) - m,
\]
combined with \eqref{eq:mbound} \eqref{eq:fhat} we obtain
\begin{equation*}
|f(x_k)|
\le
\frac{1}{2}R^2 + |\hat f(x_k)| + |m|
\le
R^2
+
\sigma^2 \log\!\left(\frac{1}{\underline{\alpha}}\right)
+
2\frac{\varepsilon^2}{\sigma^2}R^2.
\end{equation*}
\end{proof}

\subsection{Proof of Theorem \ref{teo:potentialbound}}
 \begin{proof}
 We will first show the bound holds for $\lVert f_n-f\rVert_\infty$. We will reduce the two-sample statement to simpler ones by pivoting on $(P,Q_n)$.  Specifically, denote $\breve{f}_n$  the optimal semidual potential for $(P,Q_n)$ satisfying \eqref{eq:gauge}. We write
 \begin{eqnarray*} \E\left[\Var_{\infty}\left(f_n-f\right)\right]&=&\E\left[\Var_{\infty}\left(f_n-\breve{f}_n+\breve{f}_n-f\right)\right]\\
 &\leq& 2\E\left[\Var_{\infty}\left(f_n-\breve{f}_n\right)\right]+2\E\left[\Var_{\infty}\left(\breve{f}_n-f\right)\right]\\
 &\lesssim & \frac{1}{n} +\frac{1}{n}+r_{n,d}\lesssim \frac{1}{n}+r_{n,d}.
 \end{eqnarray*}
 Where in the last line we used Lemma  \ref{lemma:varbound1} to bound $\E\left[\Var_{\infty}\left(\breve{f}_n-f\right)\right]$ and Lemma \ref{lemma:varbound2} to bound $\E\left[\Var_{\infty}\left(\breve{f}_n-f_n\right)\right]$. Since, by definition, $\E_P(f_n-f)=0$ we have $$\min_{k\in[K]} [f_n(x_k)-f(x_k)]\leq 0 \leq\max_{k\in[K]} [f_n(x_k)-f(x_k)],$$ 
 and so by Lemma \ref{lemma:varinf} this implies that
 $$\E\left(\lVert f_n-f\rVert_\infty^2\right)\leq 4\E\Var_\infty(f_n-f)\lesssim \frac{1}{n}+r_{n,d}.$$
Let's now establish bounds for $g_n-g$. Recall that, using the canonical extensions to \eqref{eqn:gstar_from_fstar} we have that for each $y\in\mathbb{R}^d$,
$$g_n(y)=-\sigma^2\log \left(\sum_{k=1}^K\alpha^n_k\exp\left(f_n(x_k)/\sigma^2-c(x_k,y)/\sigma^2\right)\right)$$ 
and $$g(y)=-\sigma^2\log\left(\sum_{k=1}^K\alpha_k\exp\left(f(x_k)/\sigma^2-c(x_k,y)/\sigma^2\right)\right).$$
We define the intermediate function $\breve{g}_n:\mathbb{R}^d\to \mathbb{R}$.
$$\breve{g}_n(y):=-\sigma^2\log\left(\sum_{k=1}^K\alpha^n_k\exp\left(f(x_k)/\sigma^2-c(x_k,y)/\sigma^2\right)\right).$$
Then, writing $z_1=f_n(x)-c(x,y),z_0=f(x)-c(x,y)\in\mathbb{R}^K$ (the notation $c(x,y)$ denotes the vector $(c(x_1,y),\ldots,c(x_K,y)$) and $z_t=tz_1+(1-t)z_0$, and denoting $$\phi_0(z):=-\sigma^2\log \left(\sum_{k=1}^K \alpha^n_k\exp(z_k/\sigma^2)\right),\quad \phi(t):=\phi_0(z_t),$$ we have
\begin{eqnarray*} \lvert g_n(y)-\breve{g}_n(y)\rvert &=&\Big \lvert \phi_0(z_1)-\phi_0(z_0)\Big \rvert =\Big \lvert \int_0^1 \phi'(s) \ud s\Big \rvert \\ &=&\Big \lvert \int_0^1 \langle \nabla \phi_0(z_s),z_1-z_0\rangle \ud s  \Big \rvert \\ &\leq & \int_0^1 \sum_{k=1}^K \lvert z_1(k)-z_0(k)\rvert \ud s\\  
&\leq & \sum_{k:\alpha_k^n>0} \lvert f_n(x_k)-f(x_k)\rvert \\
&\leq & K \lVert f_n-f\rVert_{L^\infty(P_n)}\\ & \lesssim & \lVert f_n-f\rVert_{L^\infty(P_n)}.
\end{eqnarray*}
Above, we used that for $k$ such that $\alpha_k^n>0$, $$\Big \lvert \frac{\partial \phi_0}{\partial z_k}(z)\Big \rvert =\frac{\alpha^n_k\exp(z_k/\sigma^2)}{\sum_{k'=1}^K \alpha^n_{k'}\exp(z_{k'}/\sigma^2)} \leq 1,$$
and otherwise, this derivative equals 0. Note that the above bound is independent of $y$. %Therefore, squaring and taking supremum, we obtain
%$$\E\lVert g_n-g\rVert_\infty^2\lesssim \E\lVert f_n-f\rVert_\infty^2\lesssim \frac{1}{n}. $$
Since the preceding bound is uniform in $y$ and $Q$ is a probability measure, it implies
$$
\|g_n-\breve g_n\|_{L^2(Q)}^2
\lesssim
\|f_n-f\|_{L^\infty(P_n)}^2.
$$
Taking expectations and using the already established bound for $f_n-f$, we obtain
\begin{equation}
\label{eqn::gn}
\mathbb{E}\|g_n-\breve g_n\|_{L^2(Q)}^2
\lesssim
\frac{1}{n}.
\end{equation}

The analysis of $\breve{g}_n(y)-g(y)$ is more delicate. Let $\alpha^n_k$ be the random weights associated to the empirical measure $P_n$, and define the set of active indexes \begin{equation}\label{eqn:An}A_n:=\{k\in[K]:\alpha_k^n>0\}\end{equation}
and $\Lambda_n$ as the event
\begin{equation}\label{eqn:lambda}\Lambda_n:=\{\alpha_k^n\geq \alpha_k/2, k\in[K]\}.
\end{equation}
Define also
\begin{equation}\label{eqn:pi}\Pi_k(f,\alpha,y):=\frac{\alpha_k e^{(f_{k} -c(x_{k},y))/\sigma^2 }}{\sum_{k'=1}^K \alpha_{k'} e^{(f_{k'} -c(x_{k'},y))/\sigma^2 }},\end{equation}
and
$$
R_n(y)
:=
\frac{
\sum_{k=1}^K \alpha_k^n
\exp\left((f(x_k)-c(x_k,y))/\sigma^2\right)
}{
\sum_{k=1}^K \alpha_k
\exp\left((f(x_k)-c(x_k,y))/\sigma^2\right)
} = \sum_{k=1}^K \Pi_k(f,\alpha,y)\frac{\alpha_k^n}{\alpha_k},
$$
so that clearly
$$
\breve g_n(y)-g(y)
=
-\sigma^2\log R_n(y).
$$

First, note that since $\Pi_k(f,\alpha,y)$ is a probability vector and since
$\alpha_k^n/\alpha_k\geq 1/2$ for all $k$, we have $R_n(y)\geq 1/2$. Then, using the fact that $x\to\log x$ is lipschitz on $[1/2,\infty)$ with Lipschitz constant bounded by $2$, we get that on $\Lambda_n$,
$$
|\log R_n(y)|
\leq
2|R_n(y)-1|.
$$
Moreover,
$$
|R_n(y)-1|
=
\left|
\sum_{k=1}^K \Pi_k(f,\alpha,y)
\left(\frac{\alpha_k^n}{\alpha_k}-1\right)
\right|
\leq
\max_{k\in[K]}
\left|
\frac{\alpha_k^n-\alpha_k}{\alpha_k}
\right|
\leq
\frac{1}{\underline{\alpha}}\|\alpha^n-\alpha\|_\infty.
$$
Therefore, uniformly in $y$,
$$
\mathbf{1}_{\Lambda_n}|\breve g_n(y)-g(y)|^2
\lesssim
\|\alpha^n-\alpha\|_\infty^2.
$$
Integrating with respect to $Q$ and taking expectations, Lemma~\ref{lemma:multinomial} gives
$$
\mathbb{E}\left[
\mathbf{1}_{\Lambda_n}
\|\breve g_n-g\|_{L^2(Q)}^2
\right]
\lesssim
\mathbb{E}\|\alpha^n-\alpha\|_\infty^2
\lesssim
\frac{1}{n}.
$$

We now control the complement $\Lambda_n^c$. For $k\in [K]$ and $k'\in A_n$, define
$$
a_k(y)
:=
\log\alpha_{k}
+
\frac{f(x_{k})-c(x_{k},y)}{\sigma^2},\quad
b_{k'}(y)
:=
\log\alpha^n_{k'}
+
\frac{f(x_{k'})-c(x_{k'},y)}{\sigma^2}.
$$
From the following log-sum-exp inequality: if $z\in\mathbb{R}^M$,
$$
\max_{m\in[M]} z_m
\leq
\log\sum_{m=1}^M e^{z_m}
\leq
\max_{m\in[M]} z_m+\log M,
$$

We apply this twice. Let
$$
N_a(y):=\max_{k\in[K]} a_k(y),
\qquad
N_b(y):=\max_{k\in A_n} b_{k}(y).
$$
Then, for some remainders $r_a(y)$ and $r_b(y)$ we have
$$
\log\sum_{k=1}^K e^{a_k(y)}
=
N_a(y)+r_a(y),
\qquad
0\leq r_a(y)\leq \log K,
$$
and
$$
\log\sum_{k\in A_n}e^{b_{k}(y)}
=
N_b(y)+r_b(y),
\qquad
0\leq r_b(y)\leq \log |A_n|\leq \log K.
$$

Consequently, 
\begin{equation}
\label{eqn:difM}
\left|
\log\sum_{k\in A_n}e^{b_k(y)}
-
\log\sum_{k=1}^K e^{a_k(y)}
\right|
\leq
|N_b(y)-N_a(y)|+|r_b(y)-r_a(y)|.
\end{equation}
Since both $r_b(y)$ and $r_a(y)$ lie in $[0,\log K]$, we have
$$
|r_b(y)-r_a(y)|\leq \log K.
$$
It remains to bound $|N_b(y)-N_a(y)|$. Let
$$
k^\star\in\arg\max_{k\in[K]} a_k(y),\quad
k'^\star\in\arg\max_{k\in A_n} b_k(y).
$$
Then
$$
|N_b(y)-N_a(y)|
=
|b_{k'^\star}(y)-a_{k^\star}(y)|
\leq
\max_{k\in[K],k'\in A_n,}|b_{k'}(y)-a_k(y)|,
$$
and so by \eqref{eqn:difM}, and by the definition of $\breve{g}_n$ and $g$,
$$|\breve g_n(y)-g(y)|= \sigma^2
\left|
\log\sum_{k'\in A_n}e^{b_{k'}(y)}
-
\log\sum_{k=1}^K e^{a_k(y)}
\right|
\leq
\sigma^2 \log K+
\sigma^2 \max_{k'\in A_n,\ k\in[K]}|b_{k'}(y)-a_{k}(y)|.
$$
We must now bound the last term above.
Using that $\alpha_{k'}^n\geq 1/n$ for $k'\in A_n$, that $\alpha^n_k,\alpha_k\leq 1$, and that $\alpha_k\geq\underline{\alpha}$, we get
$$
\max_{k'\in A_n,\ k\in[K]}
\left|\log\frac{\alpha_{k'}^n}{\alpha_k}\right|
\lesssim
\log n.
$$
Moreover, by Proposition~\ref{prop:potentialbound}, $\|f\|_\infty$ is bounded by a constant depending only on $R,\sigma^2,\varepsilon,\underline{\alpha}$. Finally, for the quadratic cost,
$$
|c(x_k,y)-c(x_{k'},y)|
=
\left|\frac{1}{2}\big(\|x_k\|^2-\|x_{k'}\|^2\big)
-
\langle x_k-x_{k'},y\rangle\right|\leq R^2
+
|\langle x_k-x_{k'},y\rangle|.
$$
Therefore,
$$
|\breve g_n(y)-g(y)|
\lesssim
\log n
+
L(y),
\qquad \text{where }
L(y):=
\max_{k,k'\in[K]}
|\langle x_k-x_{k'},y\rangle|.
$$
We bound the expectation of the last term using a standard subGaussianity argument: since \(Y\sim Q\) is \(\varepsilon^2\)-subGaussian, for every \(k,k'\in[K]\) the random variable $
Z_{k,k'}:=\langle x_k-x_{k'},Y\rangle$
is subGaussian with proxy at most
$
\varepsilon^2\|x_k-x_{k'}\|^2\leq 4\varepsilon^2R^2,
$ and so \cite[Chapter 2.3]{boucheron2003concentration}
\[
\mathbb P_Q(|Z_{k,k'}|>t)
\leq
2\exp\left(-\frac{t^2}{8\varepsilon^2R^2}\right).
\]
Therefore, by the union bound,
\[
\mathbb P_Q(L(Y)>t)
\leq
2K^2\exp\left(-\frac{t^2}{8\varepsilon^2R^2}\right),
\]
 and so by the layer cake representation,
\[
\mathbb E_Q L(Y)^2
=
\int_0^\infty 2t\,\mathbb P_Q(L(Y)>t)\,dt.
\]
Set
\[
t_0:=4\varepsilon R\sqrt{\log(2K)}.
\]
Then
\[
\int_0^{t_0}2t\,\mathbb P_Q(L(Y)>t)\,dt
\leq
t_0^2
\lesssim
\varepsilon^2R^2\log(2K),
\]
and using that $\int_{t_0}^\infty x\exp(-x^2/a)dx=a/2\exp(-t_0^2/a)$,
\[
\int_{t_0}^{\infty}2t\,\mathbb P_Q(L(Y)>t)\,dt
\leq
4K^2\int_{t_0}^{\infty}
t\exp\left(-\frac{t^2}{8\varepsilon^2R^2}\right)\,dt
= 4K^2 \frac{8\varepsilon^2R^2}{2} \exp\left(-\frac{t_0^2}{8\varepsilon^2R^2}\right) \leq \frac{16\varepsilon^2 R^2K^2}{(2K)^2}\lesssim \varepsilon^2R^2.
\]
Consequently,
\[
\mathbb E_Q L(Y)^2
\lesssim
\varepsilon^2R^2\log(2K).
\]

We have concluded that
$$
\|\breve g_n-g\|_{L^2(Q)}^2
\lesssim
\log^2 n
+
\varepsilon^2R^2\log(2K).
$$
Therefore, by Lemma~\ref{lemma:multinomial},
$$
\mathbb{E}\left[
\mathbf{1}_{\Lambda_n^c}
\|\breve g_n-g\|_{L^2(Q)}^2
\right]
\lesssim
\left(\log^2 n+\varepsilon^2R^2\log(2K)\right)
\mathbb{P}(\Lambda_n^c)
\lesssim
\frac{1}{n}.
$$
Combining the bounds on $\Lambda_n$ and $\Lambda_n^c$, we conclude that
\begin{equation}
\label{eqn::brevegn}
\mathbb{E}\|\breve g_n-g\|_{L^2(Q)}^2
\lesssim
\frac{1}{n}.
\end{equation}

Finally, by the triangle inequality,
$$
\|g_n-g\|_{L^2(Q)}^2
\leq
2\|g_n-\breve g_n\|_{L^2(Q)}^2
+
2\|\breve g_n-g\|_{L^2(Q)}^2.
$$
Taking expectations and using and \eqref{eqn::gn} and \eqref{eqn::brevegn}, we obtain
$$
\mathbb{E}\|g_n-g\|_{L^2(Q)}^2
\lesssim
\frac{1}{n}.
$$
 \end{proof}

 \subsubsection*{Proof of Corollary \ref{cor:onesample}}
 \begin{proof}
 The reader can verify that the term $r_{n,d}$ appears from the need to control $f_n(x_k)$ where $k$ is such that $\alpha_k^n=0$. In the one-sample case, we work with the population measure, which puts mass on all $x_k$ since $\alpha_k\geq \underline{\alpha}>0$. Likewise, if we measure error using $L^\infty(P_n)$ then there is no need to control $f_n(x_k)$ on unobserved atoms
 \end{proof}
The following corollary strengthening to higher order moments of $\|f_n-f\|_{\infty}$
\begin{corollary}
\label{cor:potential-high-moments}
Under the assumptions of Theorem~\ref{teo:potentialbound}, for every fixed integer
\(q\ge 1\),
\[
\mathbb E\|f_n-f\|_{\infty}^{2q}\lesssim n^{-q}+r_{n,d,q}.
\]
where the underlying constants depend on $q$, in addition to the parameters in Theorem \ref{teo:potentialbound}. The term $r_{n,d,q}$ is similar to the one defined in Theorem \ref{teo:potentialbound} but the leading constant is allowed to depend on $q$.
We can remove the term $r_{n,d,q}$ either in the one-sample case or when measuring error using the norm  $E\|f_n-f\|_{L^\infty(P_n)}$.
\end{corollary}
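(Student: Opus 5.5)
We plan to rerun the proof of Theorem~\ref{teo:potentialbound}, but carry $q$-th powers throughout instead of squares. First we use the same pivot on $(P,Q_n)$ with the intermediate potential $\breve f_n$. Since $\Var_\infty(u)^{1/2}$ is a seminorm, we have
\[
\Var_\infty(f_n-f)^q\le 2^{2q-1}\bigl(\Var_\infty(f_n-\breve f_n)^q+\Var_\infty(\breve f_n-f)^q\bigr).
\]
Because $\E_P(f_n-f)=0$, \eqref{eq:infityvar} in Lemma~\ref{lemma:varinf} gives $\|f_n-f\|_\infty^{2q}\le 4^q\Var_\infty(f_n-f)^q$. The task therefore reduces to $q$-th moment versions of the two intermediate bounds behind Lemmas~\ref{lemma:varbound1} and~\ref{lemma:varbound2}.

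For each piece we follow the $M$-estimation route. Corollary~\ref{cor:strong} gives $\Var_\infty(\breve f_n-f)\le \frac{2}{\kappa}(\Phi(f)-\Phi(\breve f_n))$, with $\kappa$ built from the bound $L$ of Proposition~\ref{prop:potentialbound}. This bound involves the subGaussian proxy, and for empirical measures that proxy becomes the random $\tilde\varepsilon$.

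We split on a good event $G_n$ on which $\tilde\varepsilon\le 2\varepsilon$, say, and (in the two-sample case) on which $\Lambda_n$ also holds. On $G_n$, $\kappa$ is a deterministic constant. We then bound the excess semi-dual by an empirical process: by optimality of $\breve f_n$ for $\Phi_n$ and stationarity,
\[
\Phi(f)-\Phi(\breve f_n)\le (\Phi_n-\Phi)(\breve f_n)-(\Phi_n-\Phi)(f).
\]
The right-hand side is a centered average of i.i.d.\ terms $h_{\breve f_n}(Y_i)-h_f(Y_i)$. Each term is Lipschitz in $f$, with constant $\lesssim\|\breve f_n-f\|_\infty$ uniformly in $y$, by the same log-sum-exp gradient bound used for $g_n-\breve g_n$.

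Combining the strong concavity with the localization over the finite-dimensional class gives the self-bounding inequality
\[
\Var_\infty(\breve f_n-f)\lesssim \|\breve f_n-f\|_\infty\, W_n,
\]
where $W_n$ is a supremum of an empirical process over a bounded subset of $\mathbb R^K$ indexed by Lipschitz functions. Hence $\|\breve f_n-f\|_\infty\lesssim W_n$ on $G_n$. By Rosenthal's inequality for bounded summands, together with a chaining or net argument over the $K$-dimensional parameter set, $\E W_n^{2q}\lesssim n^{-q}$ with constants depending only on $q,K$ and the bounded-potential parameters. The $f_n$ versus $\breve f_n$ comparison is handled identically, with the roles of $P_n$ and $P$ exchanged and conditioning on $Q_n$.

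On $G_n^c$ we use crude bounds. Proposition~\ref{prop:potentialboundd} bounds $\|f_n\|_\infty$ by a polynomial in $\tilde\varepsilon^2$ and $\log n$, whose moments of all orders are finite by Lemma~\ref{lemma:sigmaprob}. Cauchy--Schwarz then gives
\[
\E\bigl[\|f_n-f\|_\infty^{2q}\mathbf 1_{G_n^c}\bigr]\le \bigl(\E\|f_n-f\|_\infty^{4q}\bigr)^{1/2}\Pb(G_n^c)^{1/2}.
\]
This is $\lesssim n^{-q}$ provided $\Pb(\tilde\varepsilon>2\varepsilon)$ decays faster than $n^{-2q}$, which Lemma~\ref{lemma:sigmaprob} should give (exponentially small). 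The part coming from $\Lambda_n^c$ yields $C_q e^{-cn}$, which is $r_{n,d,q}$. It disappears in the one-sample case, or when the error is measured in $L^\infty(P_n)$, exactly as in Corollary~\ref{cor:onesample}.

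The main obstacle is the self-bounding step in $L^{2q}$. We must check that the localized supremum $W_n$ has $2q$-th moment of order $n^{-q}$ uniformly over the class. The plan is to use a symmetrization-plus-Rosenthal moment bound on a finite $\delta$-net of $\{u:\|u\|_\infty\le 2L\}$, with $\delta=n^{-1}$, absorbing the discretization error through the uniform Lipschitz bound.
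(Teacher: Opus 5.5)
Your route differs from the paper's. Instead of the localized process of Lemma~\ref{lemma:empirical} with dyadic peeling at moment order $p>2q$, you linearize, which is cleaner once made precise. Take $W_n:=\sup_{\|w\|_\infty\le L}\bigl\|\int\omega_w\,\ud(Q_n-Q)\bigr\|_1$, where $\omega_w(y)$ is the softmax vector, i.e.\ the $w$-gradient of $\Gamma_w(y)$. Its coordinates sum to one, so $\int\omega_w\,\ud(Q_n-Q)\perp\mathbf 1$. The mean value theorem then gives $\Var_\infty(\breve f_n-f)\le\frac{2}{\kappa}\Var_\infty(\breve f_n-f)^{1/2}W_n$, hence $\|\breve f_n-f\|_\infty\le\frac{4}{\kappa}W_n$ on the good event. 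The Lipschitz bound on $h_u-h_v$ alone does not give this.

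The first gap is in how you bound $W_n$. Rosenthal on a single $\delta$-net with $\delta=n^{-1}$ cannot yield $\E W_n^{2q}\lesssim n^{-q}$. The net has $N\asymp(Ln)^K$ points. A union bound at fixed moment order $p$ costs a factor $N^{2q/p}$, a polynomial loss in $n$, and even Hoeffding tails give only $(K\log n/n)^q$. No single scale removes the logarithm; you need chaining. The increments $\int(\omega_{w,k}-\omega_{w',k})\,\ud(Q_n-Q)$ average terms of range $\lesssim\|w-w'\|_\infty/\sigma^2$ uniformly in $y$. They are therefore subGaussian with proxy $\lesssim\|w-w'\|_\infty^2/(\sigma^4n)$. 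Since $\log N(\epsilon)\le K\log(3L/\epsilon)$ has a finite entropy integral, the $L^{2q}$ form of Dudley's bound gives $\E W_n^{2q}\lesssim n^{-q}$, with constants in $q,K,L,\sigma$ and not $d$. Alternatively, use Lemma~\ref{lemma:empirical} and peel, as the paper does.

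The second gap is your handling of the bad-proxy event. Lemma~\ref{lemma:sigmaprob} gives only polynomial decay $C_mn^{-m/2}$, and only at thresholds $3m\varepsilon^2$ with $m\ge2$. It says nothing about $\{\tilde\varepsilon>2\varepsilon\}$, and nothing exponentially small. Take the good event $\{\tilde\varepsilon^2<3m\varepsilon^2\}$ with $m=4q$, so the Cauchy--Schwarz factor is $\lesssim n^{-q}$; this makes $L$ and $\kappa$ depend on $q$, which the statement allows.

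Because that event is only polynomially rare, the moment factor multiplying it must be dimension-free. Off this event but on $\Lambda_n$ (and always in the one-sample case), bound the potentials by Proposition~\ref{prop:potentialbound} at the random proxy $\tilde\varepsilon$ and weights at least $\underline\alpha/2$. Take the moments from Lemma~\ref{lemma:MNW}(d), not Lemma~\ref{lemma:sigmaprob}. Using Proposition~\ref{prop:potentialboundd} there, as you propose, puts a $d$-dependent constant in front of $n^{-q}$. That bound belongs only on $\Lambda_n^c$, where it is multiplied by $e^{-cn}$ and lands in $r_{n,d,q}$; this is the paper's split, in Lemma~\ref{lemma:varbound1} and the two-sample step.

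For $f_n$ versus $\breve f_n$, your linearization works, but no empirical process is involved. The weights also sit inside the logarithm, and on $\Lambda_n$ the $f$-gradient of $\Phi_{P_n,Q_n}-\Phi_{P,Q_n}$ is $O(\|\alpha^n-\alpha\|_\infty/\underline\alpha)$. The $2q$-moment bound of Lemma~\ref{lemma:multinomial} then finishes. The paper reaches the same point via the $\chi^2$ bound of Lemma~\ref{lemma:reversevar}.
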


\begin{proof}
The proof is the same as the proof of Theorem~\ref{teo:potentialbound}, replacing the
second-moment bounds by \(2q\)-moment bounds. We briefly indicate the changes. To bound $\Var_\infty(\breve{f}_n-f)$, we first note that the localized empirical-process estimate used in the proof of
Theorem~\ref{teo:potentialbound} is available in an arbitrary fixed moment order: indeed, by Lemma \ref{lemma:empirical} for every
\(p\ge 2\),
\[
\mathbb E\left[
\sup_{\operatorname{Var}_\infty(u-f)\le \tau^2}
\left|
\int(\Gamma_u-\Gamma_f)\,d(Q_n-Q)
\right|^p
\right]
\lesssim
\left(\tau\sqrt{\frac K n}\right)^p .
\]
Choosing \(p>2q\) in the dyadic peeling argument gives
\[
\mathbb E\operatorname{Var}_\infty(\breve f_n-f)^q
\lesssim n^{-q}.
\]
Indeed, on the slice
\[
\operatorname{Var}_\infty(\breve f_n-f)\in [a_j^2,a_{j+1}^2],
\qquad
a_j:=2^j n^{-1/2},
\]
strong concavity implies that the localized empirical process must be at least of order
\(a_j^2\). Markov's inequality with moment \(p\) then gives a summable bound
\[
\mathbb P\left(
\operatorname{Var}_\infty(\breve f_n-f)\in [a_j^2,a_{j+1}^2]
\right)
\lesssim 2^{-pj},
\]
and hence
\[
\mathbb E\operatorname{Var}_\infty(\breve f_n-f)^q
\lesssim
\sum_{j\ge0} a_{j+1}^{2q}2^{-pj}
\lesssim n^{-q},
\]
provided \(p>2q\).

For the two-sample term, we mimic the strategy in the proof of Lemma \ref{lemma:varbound2}. By Lemma \ref{lemma:reversevar}, we have that on
\(\Lambda_n\),
\[
\operatorname{Var}_\infty(f_n-\breve f_n)
\lesssim
\chi^2(P_n\|P).
\]
Then, by Lemma \ref{lemma:multinomial},
$$ \mathbb E\left[1_{\Lambda_n}\operatorname{Var}_\infty(f_n-\breve f_n)^q\right]\lesssim \E\left(\left[\chi^2(P_n\|P)\right]^q\right)\leq \frac{K^q}{\underline{\alpha}^q}\E\left[\lVert \alpha^n-\alpha\rVert_\infty \right]^{2q}  \lesssim n^{-q}.$$
In the complement \(\Lambda_n^c\), we use the crude bound $\Var_\infty(f_n-\breve f_n)^q\lesssim \lVert f_n\rVert^q+\lVert f\rVert^q$ and then control each term using the (dimension-dependent) bound in Proposition \ref{prop:potentialboundd}. This will lead to a polynomial bound in a random subGaussianity parameter $\tilde{\varepsilon}$, whose moments are nonetheless bounded. We conclude the analysis of this event using Cauchy-Schwarz and the fact that $\Pb(\Lambda^c_n)$ decays exponentially fast. Thus,
\[
\mathbb E\operatorname{Var}_\infty(f_n-\breve f_n)^q
\lesssim n^{-q}+r_{n,d}.
\]

Combining the one-sample and two-sample bounds yields
\[
\mathbb E\|f_n-f\|_{\infty}^{2q}\lesssim 
\E\operatorname{Var}_\infty(f_n-f)^q +\E\operatorname{Var}_\infty(f_n-\breve{f}_n)^q \lesssim n^{-q}+r_{n,d,q}.
\]
\end{proof}

\begin{lemma}\label{lemma:varbound1} In the setup of Theorem \ref{teo:potentialbound}, if $f_n,f$ are the optimal potentials for $(P,Q_n)$ and $(P,Q)$, respectively, satisfying \eqref{eq:gauge}. Then
 $$\E\left[\Var_\infty(f_n-f)\right]\lesssim \frac{1}{n},$$
 with constants that depend on $R,\sigma^2$ and $\varepsilon$ but not on $d$.
\end{lemma}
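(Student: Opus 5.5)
The plan is to combine the strong concavity of Corollary~\ref{cor:strong} with a localized empirical-process bound and a peeling argument. Let $\Phi$ and $\Phi_n$ denote the semi-dual functionals \eqref{eq:phidisc} for $(P,Q)$ and $(P,Q_n)$. Write $\Gamma_u(y):=-\sigma^2\log\sum_k\alpha_k e^{(u_k-c(x_k,y))/\sigma^2}$, so that $\Phi(u)=\langle\alpha,u\rangle+\int\Gamma_u\,\ud Q$ and $\Phi_n(u)=\langle\alpha,u\rangle+\int\Gamma_u\,\ud Q_n$. Since $f_n$ maximizes $\Phi_n$,
\[
\Phi(f)-\Phi(f_n)\le \big[\Phi(f)-\Phi_n(f)\big]-\big[\Phi(f_n)-\Phi_n(f_n)\big]=\int(\Gamma_{f_n}-\Gamma_f)\,\ud(Q_n-Q).
\]
The linear terms cancel because $P$ is fixed.

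\textbf{Step 1: curvature.} On an event $E$ where $\|f_n\|_\infty,\|f\|_\infty\le L$ for a deterministic, $d$-free constant $L$, Corollary~\ref{cor:strong} gives
\[
\Var_\infty(f_n-f)\le \frac{2}{\kappa}\int(\Gamma_{f_n}-\Gamma_f)\,\ud(Q_n-Q).
\]
For $\|f\|_\infty$ this follows from Proposition~\ref{prop:potentialbound}. For $f_n$, I would apply Proposition~\ref{prop:potentialbound} to $(P,Q_n)$. The proof of that proposition uses only the moment-generating-function bound $\int e^{\langle v,y\rangle}\ud Q_n\le e^{\tilde\varepsilon^2\|v\|^2/2}$, and only for the finitely many directions $v=(x_k-x_{k'})/\sigma^2$. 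So it suffices to control a random proxy $\tilde\varepsilon^2$ that is defined only along these $K^2$ fixed directions. This is a one-dimensional problem in each direction, which is why it is dimension-free. A Lemma~\ref{lemma:sigmaprob}-type result then gives $\Pb(\tilde\varepsilon^2>2\varepsilon^2+1)\lesssim$ a fast decaying term, together with bounded moments. On $E^c$ I would use the crude bound $\Var_\infty(f_n-f)\lesssim 1+\tilde\varepsilon^2$ and Cauchy--Schwarz, which shows that the contribution of $E^c$ is $o(1/n)$.

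\textbf{Step 2: localized empirical process.} The map $u\mapsto\Gamma_u(y)$ is invariant under adding a constant, and it is $1$-Lipschitz in $\ell^\infty$ after that shift, because its gradient is a probability vector. Hence
\[
|\Gamma_u(y)-\Gamma_f(y)|\le 2\sqrt{\Var_\infty(u-f)}
\]
uniformly in $y$. The class $\{\Gamma_u-\Gamma_f:\Var_\infty(u-f)\le\tau^2\}$ is therefore parametrized by a $K$-dimensional ball, is Lipschitz in the parameter, and has a bounded envelope $2\tau$. Symmetrization and chaining (Dudley's integral over a $K$-dimensional ball), or a bracketing bound, then give
\[
\E\sup_{\Var_\infty(u-f)\le\tau^2}\left|\int(\Gamma_u-\Gamma_f)\,\ud(Q_n-Q)\right|\lesssim\tau\sqrt{K/n},
\]
and higher moments follow from Talagrand or bounded differences. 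Note that no dimension enters, since the envelope is bounded independently of $y$.

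\textbf{Step 3: peeling.} Let $V=\Var_\infty(f_n-f)$. On the slice $\{V\in[a_j^2,a_{j+1}^2]\}\cap E$, with $a_j=2^jn^{-1/2}$, Step 1 forces the supremum at radius $a_{j+1}$ to be at least $\kappa a_j^2/2$. By Markov's inequality with a $p$-th moment, $p>2$, the probability of the slice is $\lesssim 2^{-pj}$. Summing, $\E[V\mathbf 1_E]\lesssim n^{-1}+\sum_j a_{j+1}^2 2^{-pj}\lesssim n^{-1}$.

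\textbf{Main obstacle.} The hardest step is Step 1's control of $\|f_n\|_\infty$, which requires a dimension-free random subGaussian proxy. The argument must be reduced to the $K^2$ fixed directions. In addition, the tail of $\tilde\varepsilon^2$ must be shown to decay faster than $1/n$ after accounting for the exponential dependence of $\kappa^{-1}$. This is handled by fixing the threshold so that $\kappa$ is deterministic on $E$.
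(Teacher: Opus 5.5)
Your proposal is sound and has the same architecture as the paper's proof. Both use the basic inequality from optimality of $f_n$ for $\Phi_n$, then Corollary~\ref{cor:strong} on a high-probability event where $\|f_n\|_\infty$ has a deterministic bound, so that $\kappa$ is deterministic. Both then apply a localized empirical-process bound with $p$-th moments, dyadic peeling with $p>2$, and Cauchy--Schwarz on the bad event.

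You differ from the paper in two ingredients.
\begin{itemize}
\item \emph{Control of $\|f_n\|_\infty$.} The paper uses a global random proxy $\tilde\varepsilon$ making $Q_n$ subGaussian in every direction, built from the $d$-normalized Orlicz averages of Lemmas~\ref{lemma:MNW} and~\ref{lemma:sigmaprob}. You instead observe that the proof of Proposition~\ref{prop:potentialbound} only uses the empirical moment generating function $M_n(v):=\frac1n\sum_i e^{\langle v,Y_i\rangle}$ along the $K(K-1)$ fixed directions $v=(x_{k'}-x_k)/\sigma^2$. Everything then reduces to scalar averages whose $p$-th moments are bounded by $2^pe^{p^2\varepsilon^2\|v\|^2/2}$, which makes the $d$-independence evident. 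The paper's global proxy is the right tool when the directions are not fixed, as in Lemma~\ref{lem:alpha_n_bound}, where $\theta$ ranges over a ball.
\item \emph{The localized bound.} You derive it directly from the fact that $u\mapsto\Gamma_u(y)$ is $1$-Lipschitz in $\ell^\infty$ uniformly in $y$. This shows it needs no assumption on $Q$ at all. The paper instead transfers Lemma~F.1 of Pooladian et al., which is proved in a compactly supported setting.
\end{itemize}

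One detail must be changed: do not threshold the directional proxy itself. The event $\tilde\varepsilon^2>2\varepsilon^2+1$ along $v$ only requires $M_n(v)$ to exceed its mean by a relative amount of order $\|v\|^2$. Its fluctuations, however, are of order $\varepsilon\|v\|/\sqrt n$. So the probability of this event can be of constant order when $\|v\|\lesssim n^{-1/2}$, and your constants would depend on $\min_{k\neq k'}\|x_k-x_{k'}\|$, which the lemma does not allow. The moments of a directional proxy degenerate in the same way.

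The fix is to work with $\log M_n(v)$ throughout. Take
\[
E=\Big\{M_n(v)\le 2\,\E e^{\langle v,Y\rangle}\ \text{for all such } v\Big\}.
\]
Since $\E e^{\langle v,Y\rangle}\ge1$ for centered $Y$, Markov and Rosenthal give $\Pb(E^c)\lesssim_p n^{-p/2}$, with constants depending only on $K$, $p$ and $\varepsilon\tilde R/\sigma^2$. On $E$ you get $\|f_n\|_\infty\le L+\sigma^2\log 2$. On $E^c$ the crude bound becomes
\[
\Var_\infty(f_n-f)\lesssim\Big(1+\max_v\max\{\log M_n(v),0\}\Big)^2 .
\]
Note the square, which you dropped. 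The moments of this bound are finite because $\log x\le x$ and $\E M_n(v)^q\le \E e^{q\langle v,Y\rangle}$.

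Finally, $\Gamma_u$ is equivariant, not invariant: $\Gamma_{u+c\mathbf 1}=\Gamma_u-c$. So your uniform bound on $|\Gamma_u-\Gamma_f|$ holds only for the representative of $u$ with $\|u-f\|_\infty=\sqrt{\Var_\infty(u-f)}$. This is harmless, because constants integrate to zero against $Q_n-Q$.
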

\begin{proof}
%Note first that since $\Var_\infty(f)=\Var_\infty(f+c)$, we can assume w.l.o.g. the gauge conditions $\E_P(f(X))=\E_{P}(f_n(X))=0$. 
Let $\tilde{\varepsilon}$ be the smallest such that $Q_n,Q$ are uniformly subGaussian, which is a finite random variable by Lemma \ref{lemma:MNW}. Applying Proposition \ref{prop:potentialbound} to the pairs $(P,Q)$ and $(P,Q_n)$ we have
\[
\max\{\lVert f\rVert_\infty,\lVert f_n\rVert_\infty\}
\;\lesssim\;
R^2
\;+\;
\sigma^2 \log\!\left(\frac{1}{\underline{\alpha}}\right)
\;+\;
 \frac{\tilde{\varepsilon}^2}{\sigma^2} R^2.
\]
Then, by Lemma \ref{lemma:varinf} we have the bound
\begin{eqnarray*}Z_n:= \Var_\infty(f_n-f)&\leq& \lVert f-f_n\rVert_\infty^2\\
&\leq &2\lVert f\rVert_\infty^2+2\lVert f_n\rVert_\infty^2\\ 
&\lesssim &R^4
\;+\;
\sigma^4 \log^2\!\left(\frac{1}{\underline{\alpha}}\right)
\;+\;
 \frac{\tilde{\varepsilon}^4}{\sigma^4} R^4.
\end{eqnarray*}
 Above, we used that $(a+b)^2\leq 2(a^2+b^2)$. Let's now define the event $E_n=\{\tilde{\varepsilon}^2<12\varepsilon^2\}$. We will rely on the following decomposition
\begin{equation}\label{eq:abc}\E\left(Z_n\right)=\E\left(Z_n1_{E_n}\right)+\E\left(Z_n1_{E_n^c}\right)\lesssim  \underbrace{\E\left(Z_n1_{E_n}\right)}_{A_n}+\underbrace{\E\left(Z_n^2\right)^{1/2}}_{B_n} \underbrace{\Pb\left(E_n^c \right)^{1/2}}_{C_n},
\end{equation}
and bound each of $A_n,B_n,C_n$. First, regarding $C_n$, it follows directly from Lemma \ref{lemma:sigmaprob} (with $m=4$) that
 $C_n=\Pb(E_n^c)^{1/2}\lesssim n^{-1}$. Additionally, by Lemma \ref{lemma:MNW}(d) all moments of $\tilde{\varepsilon}^2$ are finite, implying that $B_n$ is bounded by a polynomial of degree 4 in $R$, and, in particular, is finite. It only remains to bound $A_n$. Note that in $E_n$ we have a deterministic bound for $f_n$:
 \begin{equation*}\lVert f_n\rVert_\infty
\lesssim L=L_{\varepsilon,R,\sigma^2,\underline{\alpha}}:=R^2+\sigma^2 \log\!\left(\frac{1}{\underline{\alpha}}\right)
\;+\;
 \frac{\varepsilon^2}{\sigma^2} R^2.
\end{equation*}
%Likewise, on this event
%\begin{equation*}Z_n
%\lesssim L'=L'_{\varepsilon,R,d}:=R^4 + d^2(\varepsilon^4 + R^4) + d^4(\varepsilon^8 + R^8).
%\end{equation*}

Denote $\Phi_n$ the semidual function for the one-sample empirical problem $(P,Q_n)$. From Corollary \ref{cor:strong} we obtain that for $\kappa=\kappa(\varepsilon,R,\sigma^2,\underline{\alpha})$
 \begin{eqnarray*}
 \Var_{\infty}(f_n-f)&\leq&\frac{2}{\kappa}\left(\Phi(f)-\Phi(f_n)\right)\\
 &\leq &\frac{2}{\kappa}\left(\Phi(f)-\Phi_n(f_n)+\Phi_n(f_n)-\Phi(f_n)\right)\\
 &\leq & \frac{2}{\kappa}\left(\Phi(f)-\Phi_n(f)+\Phi_n(f_n)-\Phi(f_n)\right)\\
 &=&\frac{2}{\kappa} \left(\int \Gamma_f(y) \ud (Q_n-Q)(y)+  \int\Gamma_{f_n}(y) \ud (Q-Q_n)(y) \right)\\
 &\leq & \frac{2}{\kappa} \underbrace{\Bigg \lvert  \int \left(\Gamma_f(y)-\Gamma_{f_n}(y)\right) \ud (Q-Q_n)(y) \Bigg \rvert}_{W_n}.
 \end{eqnarray*}
 Above, we also used that by optimality of $f_n$, $\Phi_n(f_n)\geq \Phi_n(f)$. Note that, from the above, in $E_n$ if $Z_n\geq a$ then $W_n\geq \kappa a/2$. Consider now the dyadic partition $[a_k^2,a_{k+1}^2]$ where $k\geq0$ and $a_k=2^k/\sqrt{n}$. By Markov's inequality and the above observations, we have
 \begin{eqnarray*}
A&=& \E(Z_n1_{E_n})\\
&=& \sum_{k=0}^\infty \E\left(Z_n 1_{Z_n\in[a^2_k,a^2_{k+1}]}\right)\\ &\leq & \sum_{k=0}^\infty a^2_{k+1}  \Pb\left(Z_n\geq a^2_k,Z_n\leq a^2_{k+1}, E_n\right)\\
&\leq & \sum_{k=0}^\infty a^2_{k+1}  \Pb\left(W_n\geq \frac{\kappa}{2}a^2_k,Z_n\leq a^2_{k+1}, E_n \right)\\
&\leq & \sum_{k=0}^\infty a^2_{k+1}  \Pb\left(\sup_{\Var_\infty(f-f')\leq a_{k+1}^2}\Bigg \lvert  \int \left(\Gamma_f(y)-\Gamma_{f'}(y)\right) \ud (Q-Q_n)(y) \Bigg\rvert \geq \frac{\kappa}{2}a_k^2\right)\\
&\leq & \sum_{k=0}^\infty a^2_{k+1}  \Pb\left(\sup_{\Var_\infty(f-f')\leq a_{k+1}^2}\Bigg \lvert  \int \left(\Gamma_f(y)-\Gamma_{f'}(y)\right) \ud (Q-Q_n)(y) \Bigg\rvert \geq \frac{\kappa}{2}a_k^2\right)\\
&\leq &\sum_{k=0}^\infty   \frac{2a^2_{k+1}}{\kappa^p a^{2p}_k}\E\left(\left[\sup_{\Var_\infty(f-f')\leq a_{k+1}^2}\Bigg \lvert  \int \left(\Gamma_f(y)-\Gamma_{f'}(y)\right) \ud (Q-Q_n)(y) \Bigg\rvert\right]^p\right)\\
&\lesssim &\frac{\kappa^{-1}}{n } \sum_{k=0}^\infty  2^{2k-pk}\\
&\lesssim & \frac{\kappa^{-1}}{n },
 \end{eqnarray*}
 if $p>2$. At this point, the conclusion follows from \eqref{eq:abc}.
\end{proof}
\begin{lemma}\label{lemma:varbound2}
In the setup of Theorem \ref{teo:potentialbound}, let $f_n$ and $\breve{f}_n$ be the optimal semidual potentials for $(P_n,Q_n)$ and $(P,Q_n)$, respectively and satisfying \eqref{eq:gauge}. Then,
$$\E\left(\Var_\infty(f_n-\breve{f}_n)\right) \lesssim \frac{1}{n}$$
\end{lemma}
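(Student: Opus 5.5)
We compare the two semi-dual problems $(P_n,Q_n)$ and $(P,Q_n)$. They share the same continuous marginal $Q_n$ and differ only in the discrete weights, $\alpha^n$ versus $\alpha$. We split on the event $\Lambda_n=\{\alpha^n_k\ge\alpha_k/2,\ k\in[K]\}$ of \eqref{eqn:lambda}. On $\Lambda_n$ every atom is charged. Both potentials are then genuine vectors in $\mathbb{R}^K$, and we can run a perturbation argument based on the strong concavity of Proposition~\ref{prop:strong}, applied to the empirical semi-dual. On $\Lambda_n^c$, whose probability decays like $e^{-cn}$ by a Chernoff/multinomial bound (Lemma~\ref{lemma:multinomial}), we only need crude moment bounds on $\Var_\infty(f_n-\breve f_n)$.

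\textbf{Step 1 (event $\Lambda_n$).} Write $\Phi_n^{\alpha}$ for the semi-dual \eqref{eq:phidisc} with weights $\alpha$ and measure $Q_n$, and $\Phi_n^{\alpha^n}$ for the one with weights $\alpha^n$. A short computation shows that the gradient of $\Phi^{\alpha}_n$ at the other problem's optimizer $f_n$ is explicit:
\[
\nabla\Phi^{\alpha}_n(f_n)=\alpha-\int \Pi(f_n,\alpha,y)\,\ud Q_n(y),
\]
with $\Pi$ as in \eqref{eqn:pi}, while $\int\Pi(f_n,\alpha^n,y)\,\ud Q_n=\alpha^n$. The two responsibility vectors $\Pi(f_n,\alpha,y)$ and $\Pi(f_n,\alpha^n,y)$ differ by a reweighting whose ratios lie within $[1/2,2]\cdot\alpha^n_k/\alpha_k$. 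Hence $\|\nabla\Phi_n^{\alpha}(f_n)\|\lesssim \|\alpha^n-\alpha\|_\infty/\underline\alpha$; more precisely, it is bounded by a $\chi^2(P_n\|P)^{1/2}$-type quantity, which is what Lemma~\ref{lemma:reversevar} packages.

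Strong concavity of $\Phi_n^{\alpha}$ on $\mathbf 1^\perp$, from Proposition~\ref{prop:strong} with $Q$ replaced by $Q_n$, then gives
\[
\Var(f_n-\breve f_n)\le \kappa_n^{-2}\|S\nabla\Phi_n^\alpha(f_n)\|^2 .
\]
Here $\kappa_n$ uses the empirical subGaussian parameter $\tilde\varepsilon$ and the sup-norm bounds from Proposition~\ref{prop:potentialbound}, applied to both $(P,Q_n)$ and $(P_n,Q_n)$. On $\Lambda_n$ the minimal weight of $P_n$ is at least $\underline\alpha/2$, so these bounds are dimension-free. Converting via Lemma~\ref{lemma:varinf} yields $\Var_\infty(f_n-\breve f_n)\lesssim \kappa_n^{-2}\|\alpha^n-\alpha\|_\infty^2$ on $\Lambda_n$.

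\textbf{Step 2 (removing the randomness of $\kappa_n$).} Exactly as in the proof of Lemma~\ref{lemma:varbound1}, we intersect with $E_n=\{\tilde\varepsilon^2<12\varepsilon^2\}$. On $\Lambda_n\cap E_n$ the constant $\kappa_n$ is deterministic, and $\E\|\alpha^n-\alpha\|_\infty^2\lesssim 1/n$ by Lemma~\ref{lemma:multinomial}. On $\Lambda_n\cap E_n^c$ we use Cauchy--Schwarz: the moments of $\tilde\varepsilon$ are finite by Lemma~\ref{lemma:MNW}, and $\Pb(E_n^c)^{1/2}\lesssim n^{-1}$ by Lemma~\ref{lemma:sigmaprob}.

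\textbf{Step 3 (event $\Lambda_n^c$).} We bound $\Var_\infty(f_n-\breve f_n)\le 2\|f_n\|^2_{L^\infty(P_n)}+2\|\breve f_n\|_\infty^2$, restricting to active atoms since $\Var_\infty$ on $\mathrm{supp}(P_n)$ is what matters. Because $\underline{\alpha}(P_n)$ may be as small as $1/n$, we use the dimension-dependent bound \eqref{eq:potentialcanonicalbound}, or Proposition~\ref{prop:potentialbound}, whose $\log n$ factor is harmless. This gives a polynomial in $\tilde\varepsilon$, $d$ and $\log n$ with bounded moments. Cauchy--Schwarz against $\Pb(\Lambda_n^c)^{1/2}\le e^{-cn}$ then gives the exponentially small term. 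Summing the three contributions proves the lemma, with the remainder absorbed into $1/n$ or into $r_{n,d}$ as needed.

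\textbf{Main obstacle.} The delicate step is Step 1. There we must make the gradient perturbation bound precise and control the strong-concavity constant along the segment between $f_n$ and $\breve f_n$ uniformly. The plan is to apply Proposition~\ref{prop:strong} with $L=\max\{\|f_n\|_\infty,\|\breve f_n\|_\infty\}$, since the segment stays in the same $\ell^\infty$ ball. An alternative is to use the mean-value form $S\nabla\Phi^\alpha_n(f_n)=\int_0^1\nabla^2\Phi_n^\alpha(f_t)\,\ud t\,(f_n-\breve f_n)$ and invert on $\mathbf 1^\perp$.
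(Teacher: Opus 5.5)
Your proposal is correct. Its skeleton matches the paper's: split on $\Lambda_n$, use strong concavity of the $(P,Q_n)$ semi-dual on $\Lambda_n$, and on $\Lambda_n^c$ use crude sup-norm bounds with Cauchy--Schwarz against the exponentially small $\Pb(\Lambda_n^c)$.

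\textbf{Where the routes differ.} The difference is the perturbation bound on $\Lambda_n$.
\begin{itemize}
\item The paper invokes \Cref{lemma:reversevar}. This is an objective-value comparison, $\Phi(f)-\Phi(f')\le\Phi(f)-\Phi'(f)+\Phi'(f')-\Phi(f')$, followed by Young's inequality against $\chi^2(P_n\|P)$, whose expectation is exactly $(K-1)/n$.
\item You use a first-order (Z-estimation) stability bound. The first-order conditions of $(P_n,Q_n)$ make $\nabla\Phi_n^\alpha(f_n)$ explicit and of order $\|\alpha^n-\alpha\|_\infty$. Inverting the integrated Hessian on $\mathbf 1^\perp$ then gives the same $\kappa_n^{-2}$ scaling.
\end{itemize}
The paper's route is shorter and reuses a stated lemma. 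Yours buys two things.
\begin{itemize}
\item It tracks that the weights $\alpha$ versus $\alpha^n$ also sit inside the log-sum-exp in \eqref{eq:phidisc}. \Cref{lemma:reversevar} writes both semi-duals with a common $\Gamma_f$, so its claimed cancellation of the $\Gamma_f$ terms is exact only after the change of variables $f_k\mapsto f_k+\sigma^2\log\alpha_k$. On $\Lambda_n$ that change costs a shift of sup-norm $O(\sigma^2\|\alpha^n-\alpha\|_\infty/\underline\alpha)$, which is harmless at the $1/n$ scale.
\item Your Step 2 makes explicit that the $\kappa$ for $(P,Q_n)$ is random through $\tilde\varepsilon$, and handles this via $E_n$. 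The paper's proof of this lemma pulls $\kappa$ out of the expectation as if it were deterministic.
\end{itemize}

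\textbf{Two points need care.}
\begin{itemize}
\item On $\Lambda_n\cap E_n^c$, apply Cauchy--Schwarz to the crude polynomial bound $2\|f_n\|_\infty^2+2\|\breve f_n\|_\infty^2$. This comes from \Cref{prop:potentialbound}, with weights at least $\underline\alpha/2$. Do not apply it to $\kappa_n^{-2}\|\alpha^n-\alpha\|_\infty^2$: that quantity grows like $\exp(C\tilde\varepsilon^2)$, with $C$ depending on $\tilde R,\sigma$. The moment bound in \Cref{lemma:MNW}(d) yields such exponential moments only for small $C$.
\item In Step 3, the justification ``only $\mathrm{supp}(P_n)$ matters'' is wrong for this lemma. $\Var_\infty(f_n-\breve f_n)$ runs over all $K$ coordinates, using the canonical extension of $f_n$ at uncharged atoms. \Cref{teo:potentialbound} needs exactly this to control $\|f_n-f\|_{L^\infty(P)}$. \Cref{prop:potentialbound} applied to $(P_n,Q_n)$ controls $f_n$ only on $\mathrm{supp}(P_n)$. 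So of your two options, only the canonical-extension bound \eqref{eq:potentialcanonicalbound} (\Cref{prop:potentialboundd}) proves the lemma as stated. This is what the paper uses, and it is the source of $r_{n,d}$. Your other option proves only the $L^\infty(P_n)$ version, the one that feeds \Cref{cor:onesample}.
\end{itemize}
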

\begin{proof}
 Consider the event $\Lambda_n$ defined in \eqref{eqn:lambda}.
We have
\begin{eqnarray*}
\E\left[\Var_\infty(f_n-\breve{f}_n)_\infty\right] &=& \E\left(1_{\Lambda_n} \Var_\infty(f_n-\breve{f}_n)_\infty\right)+\E\left(1_{\Lambda^c_n}\Var_\infty(f_n-\breve{f}_n)_\infty\right)\\
&\lesssim & \frac{\kappa}{\underline{\alpha}^2}\E\left(\chi^2(P_n\|P)\right)+ \Pb(\Lambda_n^c)^{1/2}\E\left(\lVert f_n\rVert_\infty^4+\lVert \breve{f}_n\rVert_\infty^4\right)^{1/2}\\
&\lesssim & \frac{1}{n} + \exp(-cn/2)\E\left(\left[R^2 + d(\tilde{\varepsilon}^2 + R^2) + d^2(\tilde{\varepsilon}^2 + R^2)^2\right]^4\right)^{1/2}\\
&\lesssim & \frac{1}{n} + \exp(-cn/2) \left(R^8 + d^4(\varepsilon^8 + R^8) + d^8(\varepsilon^{16} + R^{16}) \right)^{1/2}\\
&\lesssim & \frac{1}{n}+r_{n,d}.
\end{eqnarray*}
In the second line, we used Lemma \ref{lemma:reversevar} with $(P,Q_n)$ and $(P_n,Q_n)$ (note that no condition is imposed on the second measure), the crude bound  $\Var_{\infty}(f)\leq \lVert f\rVert_\infty^2$ (Lemma \ref{eq:varinf}), and Cauchy-Schwarz. In the third line, we used the weaker, dimension-dependent bound Proposition \ref{prop:potentialboundd} for the potentials. These bounds depend on the random subGaussianity parameter $\tilde{\varepsilon}$ described in the proof of \ref{lemma:varbound1}. In the fourth line, we control the moments of $\tilde{\varepsilon}$ using the same argument as in the proof of Lemma \ref{lemma:varbound1}.
\end{proof}
\subsection{Proof of Theorem \ref{theo:densitibound}}
\begin{proof}
We have
\begin{eqnarray*} p(x,y) &=& \exp\left(\frac{f(x)+g(y)-c(x,y)}{\sigma^2}\right)= \frac{\exp\left(\frac{f(x)-c(x,y)}{\sigma^2}\right)}{\sum_{k=1}^K\alpha_k\exp\left(\frac{f(x_k)-c(x_k,y)}{\sigma^2}\right)},\\
p_n(x,y)&=&\exp\left(\frac{f_n(x)+g_n(y)-c(x,y)}{\sigma^2}\right)=\frac{\exp\left(\frac{f_n(x)-c(x,y)}{\sigma^2}\right)}{\sum_{k=1}^K\alpha_k^n\exp\left(\frac{f_n(x_k)-c(x_k,y)}{\sigma^2}\right)}.
\end{eqnarray*}
These expressions are well defined for $x=x_k,k\in[K],y\in\mathbb{R}^d$ through the canonical extensions. The righ-hand sides follow by replacing \eqref{eqn:gstar_from_fstar} in the exponents.
We can bound the difference using the bound 
$\lvert e^{a}-e^b\rvert \leq e^{\max\{a,b\}}|a-b|$. This leads to 

\begin{eqnarray*}
    \lvert p(x,y)-p_n(x,y)\rvert&\leq& \frac{\max\{p(x,y),p_n(x,y)\}}{\sigma^2}\left(\lvert f_n(x)-f(x)+g_n(y)-g(y)\lvert \right)
\end{eqnarray*}

We now bound the expectation on the event \(\Lambda_n\) defined in \eqref{eqn:lambda} and its complement. Inside \(\Lambda_n\), for each $k\in[K]$ we have
$$p(x_k,y)=\frac{1}{\alpha_k}\Pi_k(f,\alpha,y)\leq \frac{1}{\underline{\alpha}},\quad p_n(x_k,y)=\frac{1}{\alpha^n_k}\Pi_k(f_n,\alpha^n,y)\leq \frac{2}{\underline{\alpha}}.$$

Therefore, on \(\Lambda_n\),
 $$\lvert p(x,y)-p_n(x,y)\rvert \lesssim  \lvert f_n(x)-f(x)\rvert +\lvert g_n(y)-g(y)\rvert$$
Integrating with respect to \(Q\), and then taking the maximum over \(k\), gives
\[
\mathbf 1_{\Lambda_n}
\|p_n-p\|_{L^\infty(P;L^2(Q))}^2
\lesssim
\|f_n-f\|^2_\infty
+
\|g_n-g\|_{L^2(Q)}^2.
\]
Taking expectations and and using the estimates from Theorem~\ref{teo:potentialbound} on $\Lambda_n$,
\[
\mathbb E\left[
\mathbf 1_{\Lambda_n}
\|p_n-p\|_{L^\infty(P;L^2(Q))}^2
\right]
\lesssim
\frac1n.
\]
It remains to control the complement \(\Lambda_n^c\). Since
\[
\|p\|_{L^\infty(P;L^2(Q))}^2
=
\max_{k\in[K]}\int p_k(y)^2\,\ud Q(y)\le \max_{k\in[K]}\int \left[\frac{1}{\alpha_k}\right]^2\,\ud Q(y)
\le
\underline\alpha^{-2}.
\]
we get 
\[
\begin{aligned}
\mathbb E\left[
\mathbf 1_{\Lambda_n^c}
\|p_n-p\|_{L^\infty(P;L^2(Q))}^2
\right]
&\le
2\mathbb E\left[
\mathbf 1_{\Lambda_n^c}
\|p_n\|_{L^\infty(P;L^2(Q))}^2
\right]
+
2\underline\alpha^{-2}\mathbb P(\Lambda_n^c)\\
& \le 2\Pb (\Lambda_n^c)^{1/2} \E\left[ \|p_n\|_{L^\infty(P;L^2(Q))}^4\right]^{1/2} +2\underline\alpha^{-2}\mathbb P(\Lambda_n^c)
\end{aligned}
\]
Since, by Lemma \ref{lemma:multinomial}, $P(\Lambda_n^c)$ decays exponentially fast, it only suffices to show that the growth of $\E\left[ \|p_n\|_{L^\infty(P;L^2(Q))}^4\right]$ is bounded by a polynomial. We devote the rest of this proof to show that for every fixed integer \(m\ge 1\),
\begin{equation}\label{eqn:exppm}
\mathbb E\left[
\|p_n\|_{L^\infty(P;L^2(Q))}^{2m}
\right]\lesssim 
 n^{2m+1}.
\end{equation}

Indeed, let $A_n$ be the set of active indexes defined in \eqref{eqn:An}. 
Take an arbitrary \(k_n\in A_n\) so that \(\alpha_{k_n}^n\ge 1/n\) and therefore for each $y\in\mathbb{R}^d$,
$p_n(x_{k_n},y)\le 1/\alpha_{k_n}^n \le n$. Note that, by rearranging terms, we can write for an arbitrary $k\in[K]$,
\begin{eqnarray}
\nonumber p_{n}(x_k,y)
&=&
p_n(x_{k_n},y)
\exp\left(
\frac{
f_{n}(x_k)-f_{n}(x_{k_n})
+
c(x_{k_n},y)-c(x_k,y)
}{\sigma^2}
\right)\\
\nonumber &\leq& n \exp\left(
\frac{
f_{n}(x_k)-f_{n}(x_{k_n})
}{\sigma^2}\right)\exp\left(\frac{
c(x_{k_n},y)-c(x_k,y)
}{\sigma^2}
\right).
\end{eqnarray}
And so, for each $m$ 
\begin{equation}
    \label{eqn::prodexp}
\E\left(\mathbf
\|p_n\|_{L^\infty(P;L^2(Q))}^{2m}\right)\leq n^{2m}\E\left[\exp\left(\frac{2m}{\sigma^2} \max_{k,k'\in[K]}\lvert f_{n}(x_k)-f_{n}(x_{k'})\rvert\right)\right] \int \exp\left(\frac{2m}{\sigma^2}\tilde{L}(y)\right)\ud Q(y),
\end{equation}

\[
\tilde{L}(y):=
\max_{k,k'\in[K]}|c(x_k,y)-c(x_{k'},y)|.
\]
we used Jensen's inequality to the function $x\to x^m$ to write $m$ inside integration with respect to $Q$. We now bound the two exponential terms. We bound the second exponential as follows
\begin{eqnarray*} \int \exp\left(\frac{2m}{\sigma^2}\tilde{L}(y)\right)\ud Q(y)
&\leq& \exp\left(\frac{2m}{\sigma^2}R^2\right)\sum_{k,k'=1}^K\int \exp\left(\frac{2m}{\sigma^2}|\langle x_k-x_{k'},y\rangle|\right)\ud Q(y)\\
&\leq &  2\exp\left(\frac{2m}{\sigma^2}R^2\right)\sum_{k,k'=1}^K \int \exp\left(\frac{2m^2\lVert x_k-x_{k'}\rVert^2\varepsilon^2}{\sigma^4}\right)\\
&\leq & 2K^2\exp\left(\frac{2m}{\sigma^2}R^2\right)\exp\left(\frac{4m^2R^2\varepsilon^2}{\sigma^4}\right)\label{eqn::expQ}
\end{eqnarray*}

We first expectation is a random quantity and we will bound its expectation. First note that
\begin{equation}\label{eqn::fbound}
\max_{k,k'\in[K]}\lvert f_{n}(x_k)-f_{n}(x_{k'})\rvert\le M_n:=\max_{i\leq n} \tilde{L}(Y_i).
\end{equation}

Indeed, by \eqref{eqn:fstar_from_gstar}
\[
f_n(x_k)
=
-\sigma^2
\log
\frac{1}{n}\sum_{i=1}^n
\exp\left(
\frac{g_n(Y_i)-c(x_k,Y_i)}{\sigma^2}
\right).
\]
Therefore,  for every sample point $Y_i$
\[
-M_n
\le
c(x_k,Y_i)-c(x_{k'},Y_i)
\le
M_n,
\]
and so
\[
e^{-M_n/\sigma^2}
e^{(g_n(Y_i)-c(x_{k'},Y_i))/\sigma^2}
\le
e^{(g_n(Y_i)-c(x_k,Y_i))/\sigma^2}
\le
e^{M_n/\sigma^2}
e^{(g_n(Y_i)-c(x_{k'},Y_i))/\sigma^2}.
\]
Averaging over samples and taking logarithms yields \eqref{eqn::fbound}. We can also bound $M_n$ as follows
\[
M_n
\le
R^2+
\max_{1\le i\le n}\max_{k,k'\in[K]}
|\langle x_k-x_{k'},Y_i\rangle|.
\]
Therefore, for every fixed \(m\ge1\),
\begin{eqnarray}
\nonumber \mathbb E
\exp\left(\frac{2mM_n}{\sigma^2}\right)
&\le&
e^{2mR^2/\sigma^2}
\mathbb E
\exp\left(
\frac{2m}{\sigma^2}
\max_{i,k,k'}
|\langle x_k-x_{k'},Y_i\rangle|
\right) \\
\nonumber &\le&
e^{2mR^2/\sigma^2}
\sum_{i=1}^n\sum_{k,k'=1}^K
\mathbb E
\exp\left(
\frac{2m}{\sigma^2}
|\langle x_k-x_{k'},Y_i\rangle|
\right) \\
\label{eqn::m2} &\leq &n 2K^2e^{2mR^2/\sigma^2}\exp\left(\frac{8m^2R^2\varepsilon^2}{\sigma^4}\right).
\end{eqnarray}
Combining \eqref{eqn::prodexp}, \eqref{eqn::fbound} and \eqref{eqn::m2} we get \eqref{eqn:exppm}.
\end{proof}
\subsection{Proof of Theorem \ref{theo:barybound}}
\begin{proof}
Recall that
\[
\back(y)=\int x\,p(x,y)\,\ud P(x)
=
\sum_{k=1}^K \alpha_k x_k p(x_k,y),
\]
and
\[
\back_n(y)=\int x\,p_n(x,y)\,\ud P_n(x)
=
\sum_{k=1}^K \alpha_k^n x_k p_{n}(x_k,y),
\]
Similarly,
\[
\for(x_k)=\int y\,p(x_k,y)\,\ud Q(y),
\qquad
\for_n(x_k)=\int y\,p_{n}(x_k,y)\,\ud Q_n(y).
\]
From the facts that
\[
\int p_{k}(x_k,y)\,\ud Q(y)=1,
\int p_{n}(x_k,y)\,\ud Q_n(y)=1\text{ for } k\in [K], \quad \text{ and }
\sum_{k=1}^K \alpha^n_k p_{n}(x_k,Y_i)=1, \text{ for }1\leq i\leq n,
\]
It follows that for $k\in[K]$, \(\for_n(x_k)\) is a convex combination of the sample points
\(Y_1,\ldots,Y_n\) and that for every \(y\),
\(\back_n(y)\) and $\back(y)$ are convex combinations of the atoms $x_k,k\in[K]$.
We will use the following pointwise bound from the proof of Theorem
\(\ref{theo:densitibound}\):
\[
|p_{n}(x_k,y)-p(x_k,y)|
\le
\max\{p_{n}(x_k,y),p(x_k,y)\}
\frac{|f_n(x_k)-f(x_k)|+|g_n(y)-g(y)|}{\sigma^2}.
\]

Let $\Lambda_n$ be as in \eqref{eqn:lambda}. It follows from the bounds on $g_n(y)-g(y)$ in the proof of Theorem \ref{teo:potentialbound} that on $\Lambda_n$,
$$|g_n(y)-g(y)|\lesssim \lVert f_n-f\rVert_\infty +\lVert \alpha^n-\alpha\rVert_\infty.$$
Therefore, on \(\Lambda_n\), 
\begin{equation}\label{eq:pointwise-density-good-event}
\sup_{k\in[K]}\sup_{y\in\mathbb R^d}
|p_{n}(x_k,y)-p(x_k,y)|
\lesssim \lVert f_n-f\rVert_\infty+ \lVert \alpha^n-\alpha\rVert_\infty.
\end{equation}
We first prove the bound for \(\back_n-\back\). On \(\Lambda_n\), using
\eqref{eq:pointwise-density-good-event} and \(p(x_k,y)\le \underline\alpha^{-1}\),
\[
\begin{aligned}
\|\back_n(y)-\back(y)\|
&\le
\sum_{k=1}^K
\|x_k\|
\left|
\alpha_k^n p_{n}(x_k,y)-\alpha_k p(x_k,y)
\right|  \\
&\le
R\sum_{k=1}^K
\left[
\alpha_k^n |p_{n}(x_k,y)-p(x_k,y)|
+
|\alpha_k^n-\alpha_k|p(x_k,y)
\right]  \\
&\lesssim
 \lVert f_n-f\rVert_\infty+ \lVert \alpha^n-\alpha\rVert_\infty .
\end{aligned}
\]
Taking the supremum over \(y\),
\[
\mathbf 1_{\Lambda_n}
\|\back_n-\back\|_{L^\infty(Q)}^2
\lesssim 1_{\Lambda_n} \lVert f_n-f\rVert^2_\infty+ 1_{\Lambda_n}\lVert \alpha^n-\alpha\rVert^2_\infty.
\]
Hence, by the bounds on $\Lambda_n$ in the proof of Theorem \(\ref{teo:potentialbound}\) and the multinomial bound in
Lemma \(\ref{lemma:multinomial}\),
\[
\mathbb E\left[
\mathbf 1_{\Lambda_n}
\|\back_n-\back\|_{L^\infty(Q)}^2
\right]
\lesssim
\frac1n.
\]
On \(\Lambda_n^c\), both \(\back_n(y)\) and \(\back(y)\) are convex combinations of $x_k$'s. Therefore $\|\back_n-\back\|_{L^\infty(Q)}
\le 2R$ on $\Lambda_n^c$, and so, again by Lemma \ref{lemma:multinomial}
\[
\mathbb E\left[
\mathbf 1_{\Lambda_n^c}
\|\back_n-\back\|_{L^\infty(Q)}^2
\right]
\le
4R^2\mathbb P(\Lambda_n^c)
\lesssim e^{-c\underline\alpha n}.
\]
Combining the bounds on $\Lambda_n$ and $\Lambda^c_n$, we conclude the bound for the backward barycentric projection.
We now establish the bound for the forward difference \(\for_n-\for\). For each \(k\in[K]\), decompose
\[
\for_n(x_k)-\for(x_k)
=
\underbrace{\int (y\,p(x_k,y))\ud (Q_n-Q)(y)}_{A_{n,k}}
+
\underbrace{\int \left[y(p_{n}(x_k,y)-p(x_k,y))\right]\ud Q_n(y)}_{B_{n,k}}.
\]
To bound $A_{n,k}$, define the variables \[
Z_{i,k}:=Y_i p(x_k,Y_i),
\qquad
Z_k:=Yp(x_k,Y),
\]
where \(Y\sim Q\). Then
\[
A_{n,k}
=
\frac1n\sum_{i=1}^n
\left[Z_{i,k}-\mathbb E Z_k\right],
\]
and so, by independence
\[
\begin{aligned}
\mathbb E\|A_{n,k}\|^2
=
\mathbb E\left\|
\frac1n\sum_{i=1}^n
\left[Z_{i,k}-\mathbb E Z_k\right]
\right\|^2  
=
\frac1n
\mathbb E\left\|Z_k-\mathbb E Z_k\right\|^2  
\le
\frac1n
\mathbb E\|Z_k\|^2  
=
\frac1n
\mathbb E\|Yp_k(Y)\|^2 .
\end{aligned}
\]
Therefore, since \(p(x_k,y)\le \underline\alpha^{-1}\), and by subGaussianity,
\[
\begin{aligned}
\mathbb E\left[\max_{k\in[K]}\|A_{n,k}\|^2\right]
\le
\sum_{k=1}^K
\mathbb E\|A_{n,k}\|^2  
\le
\frac{K}{\underline{\alpha}^2 n} E\|Y\|^2
\leq
\frac{Kd\varepsilon^2}{\underline{\alpha}^2 n}.
\end{aligned}
\]
For the second term, define $\Delta_{n,k}(y):=p_{n}(x_k,y)-p(x_k,y)$ so that  \[ B_{n,k} = \frac1n\sum_{i=1}^n Y_i\Delta_{n,k}(Y_i). \] 
Note that, by Cauchy--Schwarz: \[ \begin{aligned} \|B_{n,k}\|^2 &= \left\| \frac1n\sum_{i=1}^n Y_i\Delta_{n,k}(Y_i) \right\|^2 \\ &\le  \left[\frac1n \sum_{i=1}^n \left\|Y_i\right\| \lvert \Delta_{n,k}(Y_i) \rvert \right]^2 \\ &\le \left[ \frac1n\sum_{i=1}^n \|Y_i\|^2 \right] \left[ \frac1n\sum_{i=1}^n \Delta_{n,k}(Y_i)^2 \right]. \end{aligned} \] Taking the maximum over \(k\in[K]\) and $y\in\mathbb{R}^d$ we obtain \[ \max_{k\in[K]}\|B_{n,k}\|^2 \le \frac1n\sum_{i=1}^n \|Y_i\|^2  \sup_{k\in[K],\,y\in\mathbb R^d} |p_{n}(x_k,y)-p(x_k,y)|^2. \] 

Additionally, on \(\Lambda_n\), by \eqref{eq:pointwise-density-good-event} we can bound further, getting  $$ \mathbf 1_{\Lambda_n} \max_{k\in[K]}\|B_{n,k}\|^2 \lesssim \left[\frac1n\sum_{i=1}^n \|Y_i\|^2\right]\left[\|f_n-f\|^2_{\infty} + \|\alpha^n-\alpha\|^2_\infty \right].$$

Taking expectations and applying Cauchy--Schwarz again, \[ \begin{aligned} \mathbb E\left[ \mathbf 1_{\Lambda_n} \max_{k\in[K]}\|B_{n,k}\|^2 \right]  &\le \E\left[\left[\frac1n\sum_{i=1}^n \|Y_i\|^2\right]^2\right]^{1/2}\E  \left[21_{\Lambda_n}\|f_n-f\|^4_{\infty} + 21_{\Lambda_n}\|\alpha^n-\alpha\|^4_\infty \right]^{1/2}. \end{aligned} \] 
Let's bound the two expectations on the right-hand side. For the first one, note that
\[
\begin{aligned}
\E\left[\left[\frac1n\sum_{i=1}^n \|Y_i\|^2\right]^2\right]
&=
\mathbb E\left(\frac1n\sum_{i=1}^n \|Y_i\|^2\right)^2  \\
&=
\frac1{n^2}
\left[
\sum_{i=1}^n \mathbb E \|Y_i\|^4
+
\sum_{i\neq j}\mathbb E(\|Y_i\|^2\|Y_j\|^2)
\right]  \\
&=
\frac1n \mathbb E\|Y\|^4
+
\frac{n-1}{n}
\left(\mathbb E\|Y\|^2\right)^2  \\
&\le
\mathbb E\|Y\|^4\\ &\le
d^2\varepsilon^4,
\end{aligned}
\]
where in the second-to-last inequality follows from Cauchy--Schwarz and in the last one an elementary moment bound for a  \(\varepsilon^2\)-subGaussian vector \cite{boucheron2003concentration}. Using Corollary \ref{cor:potential-high-moments} and the multinomial moment bound in Lemma \ref{lemma:multinomial} with $q=2$, we get
\[
\mathbb E\|f_n-f\|_{\infty}^4\lesssim n^{-2},\quad \text{and} \quad \mathbb E\|\alpha^n-\alpha\|_\infty^4\lesssim n^{-2}.
\]

Putting everything together,  \[ \mathbb E\left[ \mathbf 1_{\Lambda_n} \max_{k\in[K]}\|B_{n,k}\|^2 \right] \lesssim \frac{d}{n}. \]

It remains to control \(B_{n,k}\) on \(\Lambda_n^c\). Since $\for_n(x_k)$ is a convex combination of the sample points, and since $p(x_k,y)\leq \underline{\alpha}^{-1}$,
\[
\max_{k\in[K]}\left\|\int y p_n(x_k,y)\ud Q_n(y)\right\|
\le
\max_{1\le i\le n}\|Y_i\|,\quad\text{and}\quad \max_{k\in[K]}\left\|\int y p(x_k,y)\ud Q_n(y)\right\|
\le
\frac{1}{\underline{\alpha}} \frac{1}{n}\sum_{i=1}^n\|Y_i\|\le \underline{\alpha}^{-1} \max_{1\le i\le n}\|Y_i\|.
\]
%and that, by Cauchy--Schwarz and subGaussianity,
%\[
%\left\|\int y p(x_k,y)\ud Q(y)\right\|
%\le
%\left(\int \|y\|^2 \ud Q(y)\right)^{1/2}
%\left(\int p(x_k,y)^2\ud Q(y)\right)^{1/2}
%\le
%\underline\alpha^{-1}(\mathbb E\|Y\|^2)^{1/2}\le \underline\alpha^{-1}\sqrt{d}\varepsilon.
%\]
This implies that
\[
\mathbf 1_{\Lambda_n^c}
\max_{k\in[K]}\lVert B_{n,k}\rVert^2
\lesssim
\mathbf 1_{\Lambda_n^c}
\left[
\max_{1\le i\le n}\|Y_i\|^2
\right].
\]
Now, by Cauchy--Schwarz, the exponential bound in Lemma \ref{lemma:multinomial}, and a subGaussianity moment bound
\[
\begin{aligned}
\mathbb E\left[
\mathbf 1_{\Lambda_n^c}
\max_{k\in[K]}\|B_{n,k}\|^2
\right] & \le \mathbb E\left[
\mathbf 1_{\Lambda_n^c}
\max_{1\le i\le n}\|Y_i\|^2
\right]\\ 
&\le
\mathbb P(\Lambda_n^c)^{1/2}
\left(\mathbb E\max_{1\le i\le n}\|Y_i\|^4\right)^{1/2}  \\
&\le
\mathbb P(\Lambda_n^c)^{1/2}
\left(\sum_{i=1}^n\mathbb E\|Y_i\|^4\right)^{1/2}\\& 
\lesssim
e^{-c\underline\alpha n/2}\sqrt n\,d\varepsilon^2\lesssim \frac{d}{n},
\end{aligned}
\]

Combining the bounds for \(A_{n,k}\) and \(B_{n,k}\), we obtain
\[
\mathbb E
\|\for_n-\for\|_{\infty}^2
=
\mathbb E\max_{k\in[K]}\|\for_n(x_k)-\for(x_k)\|^2\lesssim  \E\max_{k\in[K]}\left[\lVert A_{n,k}\rVert^2 +\lVert B_{n,k}\rVert^2\right]
\lesssim
\frac{d}{n}.
\]
The proof is complete.
\end{proof}

\subsection{Proof of Proposition~\ref{prop:minimax_lower_bound}}
\label{app:minimax_lower_bound}

\begin{proof}
Fix $a\in\RR^d$ with $\|a\|=R$, and let
\[
r=\frac{1}{4\sqrt n}.
\]
Consider the two hypotheses
\[
P_0
=
\frac12\delta_{-a}+\frac12\delta_a,
\qquad
P_1
=
\left(\frac12-r\right)\delta_{-a}
+
\left(\frac12+r\right)\delta_a,
\]
together with the common source measure
\[
Q=\delta_0.
\]
Both pairs $(P_j,Q)$ belong to $\mathcal C$, since $r\leq1/4$,
$\delta_0$ is $\varepsilon^2$-subGaussian, and $\E_QY=0$.

Because $Q$ is supported on a single point, there is only one coupling
between $P_j$ and $Q$, namely
\[
\pi_j=P_j\otimes\delta_0.
\]
It is therefore the entropic optimal coupling for every $\sigma^2>0$.
Writing $T_j=\back_{P_j,Q}$, its backward barycentric projection satisfies
\[
T_j(0)=\E_{P_j}[X],
\]
and hence
\[
T_0(0)=0,
\qquad
T_1(0)=2ra.
\]
Therefore,
\[
\|T_1-T_0\|_{L^2(Q)}^2
=4r^2R^2.
\]

The $Q$-samples are identical under the two hypotheses, so the total
variation distance between the two experiments equals
\[
d_{\mathrm{TV}}
\left(P_0^n\otimes Q^n,P_1^n\otimes Q^n\right)
=d_{\mathrm{TV}}(P_0^n,P_1^n).
\]
Moreover,
\[
\KL(P_0\|P_1)
=-\frac12\log(1-4r^2)
\leq\frac83r^2,
\]
where the inequality uses $r\leq1/4$.
By tensorization and Pinsker's inequality,
\[
d_{\mathrm{TV}}(P_0^n,P_1^n)
\leq\sqrt{\frac n2\KL(P_0\|P_1)}
\leq2r\sqrt{\frac n3}
\leq\frac12.
\]

Let $\E_j$ denote expectation under $P_j^n\otimes Q^n$.
Le Cam's two-point lemma now gives
\[
\begin{aligned}
\inf_{\widehat T}
\sup_{j\in\{0,1\}}
\E_j\left[\|\widehat T-T_j\|_{L^2(Q)}^2\right]
&\geq
\frac{\|T_1-T_0\|_{L^2(Q)}^2}{8}
\left(1-d_{\mathrm{TV}}(P_0^n,P_1^n)\right)\\
&\geq\frac{4r^2R^2}{16}
=\frac{R^2}{64n}.
\end{aligned}
\]
Since both hypotheses belong to $\mathcal C$, this proves the claim.
\end{proof}

\section{Other technical lemmata for Section \ref{sec:sample_complexity}}
 
The following upper bound addresses a one-sample case, and it is helpful in the proof of Lemma \ref{lemma:varbound2}. 

\begin{lemma}\label{lemma:reversevar}
If $(f,g)$ and $(f',g')$ are the optimal potentials for $(P,Q)$ and $(P',Q)$ where $Q$ is $\varepsilon^2$ subGaussian and $P,P'$ are both semidiscrete supported on the same $x_k$'s and such that $P'\ll P$. Then,
$$\Var_{\infty}\left(f'-f\right)\leq\frac{4}{\kappa^2}\chi^2\left(P'\|P\right).$$
\begin{proof}
Using Corollary \ref{cor:strong} and similarly as in the proof of Lemma \ref{lemma:varbound1}, if we denote by $\Phi$ and $\Phi'$ the semidual functions related to $(P,Q)$ and $(P',Q)$, we have
 \begin{eqnarray*}
 \Var_{\infty}(f'-f)&\leq&\frac{2}{\kappa}\left(\Phi(f)-\Phi(f')\right)\\
 &\leq &\frac{2}{\kappa}\left(\Phi(f)-\Phi'(f)+\Phi'(f)-\Phi(f')\right)\\
 &\leq & \frac{2}{\kappa}\left(\Phi(f)-\Phi'(f)+\Phi'(f')-\Phi(f')\right),
 \end{eqnarray*}
 where in the last line we used the optimality of $f'$ for $\Phi'$. Now, note that most of the terms will cancel out as

 $$\Phi(f)=\int f(x) \ud P(x)- \int \Gamma_f(y) \ud Q(y),\quad \Phi'(f)=\int f(x) \ud P'(x)- \int \Gamma_{f}(y)\ud Q(y),$$
 and 
 $$\Phi'(f')=\int f'(x) \ud P'(x)- \int \Gamma_{f'}(y) \ud Q(y),\quad \Phi(f')=\int f'(x) \ud P(x)- \int \Gamma_{f'}(y)\ud Q(y),$$
 implying that 
 \begin{eqnarray*}
  \Var_{\infty}(f'-f)&\leq&\frac{2}{\kappa}\left(\int (f-f')(x)\ud (P-P')(x)\right)\\
  &\leq &\frac{2}{\kappa}\left(\frac{\theta}{2}\Var_{\alpha}\left(f'-f\right)+\frac{1}{2\theta}\chi^2(P'\|P)\right)\\
  &\leq &\frac{2}{\kappa}\left(\frac{\theta}{2}\Var_{\infty}\left(f'-f\right)+\frac{1}{2\theta}\chi^2(P'\|P)\right),
 \end{eqnarray*}
where the last line follows from \eqref{eq:varbound2} and the second to last from Young's inequality, Lemma H.1 in \cite{pooladian2023minimax}, that if $P'\ll P $, then for any function $f$ and $\theta>0$ (in the semidiscrete case we identify $\Var_P$ with $\Var_\alpha$)
 $$\int f \ud (P-P')(x)\leq \frac{\theta}{2}\Var_{P}(f)+\frac{1}{2\theta}\chi^2(P'\|P).$$
 We conclude the proof by taking $\theta=\kappa/2$
\end{proof}
\end{lemma}

\begin{proposition}[Dimension dependent bounds for potentials]\label{prop:potentialboundd}
Suppose that $P$ is supported on the ball $B(0,R)$ and that $Q$ is subGaussian with proxy $\varepsilon^2$. Then, under the gauge constraint \eqref{eq:gauge} the entropic optimal dual potential $f$ satisfies 
\[
|f(x)|
\;\lesssim\;
R^2 + d(\varepsilon^2 + R^2) + d^2(\varepsilon^2 + R^2)^2.
\]

where $\lesssim$ indicate constants independent of $R,\varepsilon,d$.
\end{proposition}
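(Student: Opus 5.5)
Since this is the dimension-dependent bound attributed in the main text to \cite[Proposition 6]{mena2019statistical}, I would follow the canonical-extension argument there. I would work in centered coordinates ($\E Y=0$), as in the appendix. The target is a two-sided pointwise bound on $f(x)$ and $g(y)$ through the first-order conditions \eqref{eqn:fstar_from_gstar}--\eqref{eqn:gstar_from_fstar}. The gauge $\E_P f=0$ then fixes the additive constant.

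\textbf{Step 1 (upper bounds via Jensen).} Applying Jensen's inequality to the concave $\log$ in \eqref{eqn:fstar_from_gstar} gives
\[
f(x)\le \int \bigl(c(x,y)-g(y)\bigr)\,\ud Q(y)=\int c(x,y)\,\ud Q(y)-\E_Q g .
\]
Likewise, \eqref{eqn:gstar_from_fstar} gives $g(y)\le \int c(x,y)\,\ud P(x)-\E_P f=\int c(x,y)\,\ud P(x)$, using the gauge. Since $\|x\|\le R$, this yields $g(y)\le \tfrac12(R+\|y\|)^2$. For the constant $\E_Q g$, I would use strong duality: $\E_P f+\E_Q g=S(P,Q)\ge 0$. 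Hence $\E_Q g\ge 0$, and therefore
\[
f(x)\le \tfrac12\E\|x-Y\|^2\lesssim R^2+d\varepsilon^2 ,
\]
where $\E\|Y\|^2\le d\varepsilon^2$ for a centered $\varepsilon^2$-subGaussian vector. This already explains the linear-in-$d$ contribution.

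\textbf{Step 2 (lower bound for $f$).} Starting from \eqref{eqn:fstar_from_gstar}, I would insert the upper bound $g(y)\le \tfrac12(R+\|y\|)^2$ from Step 1. This gives
\[
f(x)\ge -\sigma^2\log\int \exp\!\Bigl(\tfrac{\tfrac12(R+\|y\|)^2-\tfrac12\|x-y\|^2}{\sigma^2}\Bigr)\,\ud Q(y).
\]
The exponent is at most $\bigl(R\|y\|+R^2+\|x\|\|y\|\bigr)/\sigma^2$, which is linear in $\|y\|$ for $\|x\|\le R$. The quadratic terms cancel, as in the proof of Proposition \ref{prop:potentialbound}. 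It remains to bound $\E e^{\lambda\|Y\|}$ with $\lambda\lesssim R/\sigma^2$. By subGaussianity of the norm, $\|Y\|\le \E\|Y\|+$ a subGaussian fluctuation, with $\E\|Y\|\le\sqrt d\,\varepsilon$. This gives $\log\E e^{\lambda\|Y\|}\lesssim \lambda\sqrt d\varepsilon+\lambda^2\varepsilon^2$. After absorbing $\sigma^2$ into the constants and using $ab\le a^2+b^2$, the result is $|f(x)|\lesssim R^2+d(\varepsilon^2+R^2)$, plus cross terms. If one instead uses the cruder bound $\E e^{\lambda\|Y\|}\le \sum_j \E e^{\lambda|Y_j|}$ coordinatewise, or bounds via $\|Y\|^2$, the result carries the extra $d^2(\varepsilon^2+R^2)^2$. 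Either way the statement's bound holds, with its displayed form being the generous one.

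\textbf{Main obstacle.} I expect the main difficulty to be handling the constant $\E_Q g$, and doing so without circularity. The Jensen bounds alone control $f$ and $g$ only up to each other's means. The key is combining the gauge \eqref{eq:gauge} with $S(P,Q)\ge0$ and $S(P,Q)\le \int c\,\ud(P\otimes Q)\lesssim R^2+d\varepsilon^2$. If the nonnegativity route fails, for instance through sign conventions, I would instead bound $|\E_Q g|$ directly by integrating the two-sided pointwise bounds for $g$ against $Q$. This also costs $d$-dependent moments of $\|Y\|^2$ and produces the quadratic-in-$d$ term.
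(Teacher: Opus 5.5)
Your argument is correct, but it takes a different route from the paper's. The paper derives nothing from the first-order conditions. It treats the bounded $P$ as subGaussian with proxy $R^2$, takes the common proxy $\lambda^2=\varepsilon^2+R^2$, and quotes the two-sided bounds of Proposition 6 of Mena and Niles-Weed, which are stated under their normalization $\E_P f=\E_Q g=\tfrac12 S(P,Q)$. It then recenters to the gauge \eqref{eq:gauge}. The $d^2(\varepsilon^2+R^2)^2$ term comes from the factor $d\lambda^2(\|x\|+\sqrt{2d}\lambda)^2$ in the cited lower bound.

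You instead combine Jensen, the gauge, and $\E_Q g=S(P,Q)\ge 0$ directly. You also use boundedness of $P$, so the quadratic terms cancel and only $\E e^{2R\|Y\|/\sigma^2}$ survives. This is self-contained and works directly under (B), with no translation between subGaussianity notions or normalizations. It is also sharper: you get
\[
-\tfrac12R^2-2R\varepsilon\bigl(\sqrt d+\sqrt{2\pi}\bigr)-\frac{2R^2\varepsilon^2}{\sigma^2}\;\le\; f(x)\;\le\;\tfrac12\bigl(R^2+d\varepsilon^2\bigr),
\]
hence $|f(x)|\lesssim R^2+d(\varepsilon^2+R^2)+(\varepsilon^2+R^2)^2/\sigma^2$. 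There is no $d^2$ and no leftover additive constant. By contrast, the cited lower bound carries an additive $-1$ (i.e.\ $-\sigma^2$ before rescaling), which the paper's display drops. The cross term you left vague is exactly $R^2\varepsilon^2/\sigma^2\le(\varepsilon^2+R^2)^2/(4\sigma^2)$, and it is the only thing the quartic term is needed for. The paper's route is a one-line citation and does not use boundedness of $P$, but that generality is not needed here.

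One justification needs repair. Under (B), $\|Y\|-\E\|Y\|$ need not be subGaussian with proxy $O(\varepsilon^2)$. For example, $Y=BG$ with $B\sim\mathrm{Bernoulli}(1/2)$ independent of $G\sim\mathcal N(0,\varepsilon^2 I_d)$ satisfies (B), yet $\|Y\|$ is $0$ or about $\varepsilon\sqrt d$ with equal probability. What you actually need is the one-sided tail bound $\Pb\bigl(\|Y\|\ge\varepsilon(\sqrt d+\sqrt{2t})\bigr)\le e^{-t}$ (Hsu--Kakade--Zhang, valid under (B)). Integrating the tail of $(\|Y\|-\varepsilon\sqrt d)_+$ gives $\log\E e^{\lambda\|Y\|}\le\lambda\varepsilon\sqrt d+\sqrt{2\pi}\lambda\varepsilon+\lambda^2\varepsilon^2/2$, and taking $\lambda=2R/\sigma^2$ yields the display above.

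Drop the ``cruder'' alternatives. The inequality $\E e^{\lambda\|Y\|}\le\sum_j\E e^{\lambda|Y_j|}$ is false as written; the exponent needs $\lambda\sqrt d$. Both it and the route through $\|Y\|^2$ leave additive terms such as $\sigma^2\log(2d)$ or $\sigma^2\log 2$. These do not vanish as $R,\varepsilon\to 0$, so they are not of the displayed form.
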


\begin{proof}
We will use Proposition 6 of \cite{mena2019statistical}, that the optimal entropic potentials $(f,g)$ between two subGaussian measures with same proxy $\lambda^2$ satisfy

\[- d\lambda^2\left(1+\frac{1}{2}\left(\|x\|+\sqrt{2d}\,\lambda\right)^2\right)-1
\le f(x)\le
\frac{1}{2}\left(\|x\|+\sqrt{2d}\,\lambda\right)^2,
\]
and 
\[- d\lambda^2\left(1+\frac{1}{2}\left(\|y\|+\sqrt{2d}\,\lambda\right)^2\right)-1
\le g(y)\le
\frac{1}{2}\left(\|y\|+\sqrt{2d}\,\lambda\right)^2,
\]
under the normalization constraint that $\E_P(f(X))=\E_Q(g(Y))=\frac{1}{2}S(P,Q).$

To apply this result we must identify a uniform subGaussianity proxy for $P$ and $Q$. Since $P$ is bounded, it is subGaussian with proxy $R^2$. Therefore, $P,Q$ are simultaneously subGaussian with proxy $\varepsilon^2+R^2$.  
Using the inequality $(a+b)^2 \le 2a^2 + 2b^2$, we obtain
\[
\left(\|x\| + \sqrt{2d}\,\lambda\right)^2
\le
2\|x\|^2 + 4d\lambda^2.
\]
Since $\|x\|\le R$, this yields
\[
\left(\|x\| + \sqrt{2d}\,\lambda\right)^2
\le
2R^2 + 4d\lambda^2.
\]

Substituting into the upper bound gives
\[
f(x)
\le
\frac{1}{2}(2R^2 + 4d\lambda^2)
\lesssim
R^2 + d\lambda^2.
\]

For the lower bound, we similarly obtain
\[
f(x)
\ge
- d\lambda^2\left(1 + \frac{1}{2}(2R^2 + 4d\lambda^2)\right) - 1
\lesssim
-\,d\lambda^2(1 + R^2 + d\lambda^2).
\]

Combining the two bounds yields
\[
|f(x)|
\lesssim
R^2 + d\lambda^2 + d^2\lambda^4.
\]

Finally, substituting $\lambda^2 = \varepsilon^2 + R^2$ gives the claim under the constraint$\E_P(f(X))=\frac{1}{2}S(P,Q)$. To achieve the final conclusion we define $f_{\text{new}}(x)=f(x)-\E_P(f(X))$. By definition, $f_{\text{new}}$ satisfies \eqref{eq:gauge} and
\begin{eqnarray*}
|f_{\text{new}}(x)|&\leq& |f(x)|+\sum_{k=1}^K \alpha_k f(x_k) \\ &\leq&  |f(x)|+\max_{k\in[K]} |f(x_k)|\sum_{k=1}^K\alpha_k\\ &\lesssim& 
2\lVert f\rVert_{\infty}\\
&\lesssim & R^2+d(\varepsilon^2+R^2)+d^2(\varepsilon^2+R^2)^2\end{eqnarray*}
by the bound on $f(x)$, so the proof is concluded
\end{proof}
\begin{lemma}\label{lemma:sigmaprob} Let $\tilde{\varepsilon}$ be the infimum of $\varepsilon_u$ such that $Q,Q_n$ are $\varepsilon_u^2$ subGaussian uniformly over $n$. Then, for any integer $m\geq 2$ there is a constant $C_m>0$ such that
 \begin{equation} \label{eq:sigmaprob} \Pb(\tilde{\varepsilon}^2\geq 3m\varepsilon^2)\leq C_m n^{- \frac{m}{2}}.\end{equation}
 \end{lemma}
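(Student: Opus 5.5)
The plan is to reduce the event $\{\tilde{\varepsilon}^2\geq 3m\varepsilon^2\}$ to a large deviation of an empirical mean of i.i.d.\ variables with finite $m$-th moment. That deviation is then controlled by Markov's inequality combined with Rosenthal's moment inequality \citep{rosenthal1970subspaces}, which yields the $n^{-m/2}$ rate.

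\textbf{Step 1 (deterministic criterion).} I would use the Orlicz-type characterization of subGaussianity recorded in Lemma~\ref{lemma:MNW}, following \cite{mena2019statistical}. It says that a measure $\mu$ (in particular $Q_n$) is $\lambda^2$-subGaussian as soon as
\[
\int \exp\!\Big(\frac{\|y\|^2}{2d\lambda^2}\Big)\,\ud\mu(y)\leq 2,
\]
up to the universal constants fixed in that lemma. The same lemma gives the converse for $Q$: since $Q$ is centered and $\varepsilon^2$-subGaussian, $\E\exp(\|Y\|^2/(2d c\,\varepsilon^2))$ is finite, and is close to $1$, for every $c>1$. Taking $\lambda^2=3m\varepsilon^2$, which also dominates $Q$'s proxy, we get
\[
\{\tilde{\varepsilon}^2\geq 3m\varepsilon^2\}\subseteq\Big\{\frac1n\sum_{i=1}^n W_i>2\Big\},\qquad W_i:=\exp\!\Big(\frac{\|Y_i\|^2}{6md\varepsilon^2}\Big).
\]

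\textbf{Step 2 (moments of $W_i$).} The $W_i$ are i.i.d.\ and satisfy
\[
\E W_i^m=\E\exp\!\Big(\frac{\|Y\|^2}{6d\varepsilon^2}\Big)=:\Theta,
\]
which is a finite constant independent of $n$ by Step 1 with $c=3$. By Jensen, $\E W_i\leq \Theta^{1/m}$. Here I need $\E W_i\leq 2-\eta$ for some $\eta>0$, so that the threshold $2$ sits a fixed distance above the mean. The factor $3m$ in the exponent should be exactly what makes $\Theta^{1/m}$ (or a direct bound on $\E W_i$) strictly below $2$. If the available constants are not sharp enough, I would enlarge the constant $3$ slightly or absorb the slack into $C_m$.

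\textbf{Step 3 (concentration).} Write $\bar W_n-\E W$ for the centered empirical mean. Markov's inequality at order $m$ gives
\[
\Pb(\bar W_n>2)\leq \eta^{-m}\,\E|\bar W_n-\E W|^m .
\]
By Rosenthal's inequality, $\E|\bar W_n-\E W|^m\lesssim_m n^{-m/2}(\Var W)^{m/2}+n^{1-m}\E|W-\E W|^m\lesssim_m \Theta\, n^{-m/2}$, using $m\geq2$. This yields \eqref{eq:sigmaprob} with $C_m\propto \eta^{-m}\Theta$.

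The main obstacle is Step 1/2: getting a usable deterministic criterion for the random proxy $\tilde\varepsilon$ from Lemma~\ref{lemma:MNW}, with constants precise enough that $\E W_i$ stays strictly below the threshold $2$ while the $m$-th moment of $W_i$ remains finite. Step 3 is routine once this is in place.
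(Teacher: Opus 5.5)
Your proposal is correct and is essentially the paper's argument. Both reduce $\{\tilde\varepsilon^2\ge 3m\varepsilon^2\}$ to an empirical mean of $\exp(\|Y_i\|^2/(c\,md\varepsilon^2))$ exceeding a threshold a fixed distance above its Jensen-bounded mean, and then apply Markov at order $m$ with Rosenthal's inequality. The paper uses scale $2md\varepsilon^2$, the linear proxy $m\varepsilon^2\tau_{m,n}$ from Lemma~\ref{lemma:MNW}(a), and threshold $3$; you use scale $6md\varepsilon^2$, the Orlicz criterion itself, and threshold $2$.

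Your Step~2 worry closes at once: $\E W_i\le\Theta^{1/m}\le 2^{1/(3m)}<2$, using $\E\exp(\|Y\|^2/(2d\varepsilon^2))\le 2$ as the paper does. Note, however, that your reduction needs the criterion in its exact form (``Orlicz average $\le 2$ implies proxy $\lambda^2$''). Lemma~\ref{lemma:MNW}(a) read literally only gives proxy $3m\varepsilon^2\bar W_n\ge 3m\varepsilon^2$ (as $W_i\ge 1$), so the containment becomes vacuous. If you must work from that lemma, the repair is to shrink the scale as the paper does, not to enlarge the constant $3$, which would change the statement.
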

 \begin{proof}
 The proof extends the one of Lemma A.3 in \cite{groppe2024lower}. Defining, for each $m$ integer
 
 $$\tau_{m,n}=\frac{1}{n} \sum_{i=1}^n \exp\left(\frac{\lVert Y_i\rVert^2}{2md\varepsilon^2}\right),\quad \tau_m=  \E\left(\tau_{m,n}\right)=\E\left(\exp\left(\frac{\lVert Y\rVert^2}{2md\varepsilon^2}\right)\right),$$
By Lemma \ref{lemma:MNW}(a), (with $\mu=Q_n$ and $\alpha =2md\varepsilon^2$) we conclude that $Q_n$ is $ 
 \tau_{m,n} m\varepsilon^2$-subGaussian and so $\tilde{\varepsilon}^2 \leq m\tau_{m,n}^2\varepsilon^2$. Therefore,
 $$\Pb(\tilde{\varepsilon}^2\geq 3m\varepsilon^2)\leq \Pb\left(\tau_{m,n}\geq 3\right).$$
 We now apply Markov's inequality to powers of centered $\tau_{m,n}$ (the other case is analogous). Since, By Jensen's inequality, $\tau_m,\leq 2^{1/m}\leq 2$, $(3-\tau_m)\geq 1$, and so we have
 $$\Pb\left(\tau_{m,n}\geq 3\right)\leq \Pb\left(\lvert\tau_{m,n}-\tau_m\rvert^ m\geq (3-\tau_m)^m\right)\leq \frac{\E\left(\lvert\tau_{m,n}-\tau_m\rvert^m\right)}{(3-\tau_m)^m}\leq \E\left(\lvert\tau_{m,n}-\tau_m\rvert^m\right).$$
 It only remains to bound the last term above. The case $m=2$ corresponds to Lemma A.3 in \cite{groppe2024lower}. Now, note that $\tau_{m,n}-\tau_m=\frac{1}{n} \sum_{i=1}^n X_i$, where $$X_i=\exp\left(\frac{\lVert Y_i\rVert^2}{2md\varepsilon^2}\right)-\E\left(\exp\left(\frac{\lVert Y\rVert^2}{2md\varepsilon^2}\right)\right)$$ is a zero mean variable. Note also that all moments up to order $m$ are bounded. Indeed, owing to that $|a-b|^k\leq 2^{k-1}(|a|^k+|b|^k)$ we get that for $m'\in\{2,m\}$, 
 \begin{eqnarray*} \E(|X_i|^{m'})&\leq& 2^{m'-1}\left(\E\left(\exp\left(\frac{m'\lVert Y_i\rVert^2}{2md\varepsilon^2}\right)\right)+\E\left(\exp\left(\frac{\lVert Y_i\rVert^2}{2md\varepsilon^2}\right)\right)^{m'}\right)\\
 &\leq & 2^{m'-1} \left(\E\left(\exp\left(\frac{\lVert Y_i\rVert^2}{2d\varepsilon^2}\right)\right)^{m'}+\E\left(\exp\left(\frac{\lVert Y_i\rVert^2}{2d\varepsilon^2}\right)\right)^{m'}\right)\\
 &\leq & 2^{m-1}\left(2^m+2^m\right)\\
 &\leq & 2^{2m}.
 \end{eqnarray*}
 by subGaussianity, the fact that $1/m\leq 1$, that $m'\geq 2$ and Jensen's inequality. Then, by Rosenthal's inequality (Lemma \ref{lemma:rosenthal}), for some constants $C_m$ (sometimes renamed from one to the next)
 \begin{eqnarray*} \E\left(\lvert\tau_{m,n}-\tau_m\rvert^m\right) &\leq& \frac{1}{n^m} C_m \max\Bigg\{n \E|X_i|^m,\left(  n\E|X_i|^2\right)^{m/2}\Bigg\} \\
 &\leq & C_m\left(n^{1-m}+n^{-m/2}\right)\\
  &\leq & C_mn^{-m/2}.
  \end{eqnarray*}
 where $C_{m,j}$ are polynomial expressions involving moments of $\tau_{m}$ up to order $m$. These moments are all bounded by 2, by subGaussianity.
 \end{proof}

The following the following $L^2$-type convergence of the empirical potentials at the rate $n^{-1}$:

\begin{lemma}\label{lemma:empirical}
For vectors $x_k\in B(0,R)\subseteq\mathbb{R}^d$, $k\in [K]$, and a vector $f=f(x_1,\ldots, x_k)\in\mathbb{R}^K$, define the function $\Gamma_f:\mathbb{R}^d\to \mathbb{R}$ as ($\sigma^2,\alpha$ are constants)
\begin{equation}\label{eq:empirical1}\Gamma_f(y):=\sigma^2 \log\left(\sum_{k=1}^K \alpha_k e^{(f_k-\frac{1}{2}\lVert y-x_k\rVert^2)/\sigma^2}\right).\end{equation}
Then, for any two vectors $f=f(x_1,\ldots, x_k), f'=f'(x_1,\ldots, x_k)\in\mathbb{R}^K$ we have
$$\E\left(\sup_{\Var_{\infty}(f'-f)\leq \tau^2} \Bigg \lvert\int \left(\Gamma_f(y)-\Gamma_{f'}(y)\right)\ud (Q-Q_n)(y)\Bigg \rvert \right)\leq C_{\underline{\alpha},R} \tau \sqrt{\frac{K}{n}},$$
for some constant $C_{\underline{\alpha},R}>0$.
Moreover, for $p\geq 2$ and a constant $C_{\underline{\alpha},R,p}>0$,
\begin{equation}\label{eq:empirical2}\E\left(\left[\sup_{\Var_{\infty}(f'-f)\leq \tau^2} \Bigg \lvert\int \left(\Gamma_f(y)-\Gamma_{f'}(y)\right)\ud (Q-Q_n)(y)\Bigg \rvert \right]^p\right)\leq C_{\underline{\alpha},R,p} \left(\tau \sqrt{\frac{K}{n}}\right)^p.\end{equation}

\end{lemma}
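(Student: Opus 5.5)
The key fact is that the increment $\Gamma_f-\Gamma_{f'}$ is uniformly bounded by the oscillation of $f-f'$, independently of $y$. From there the supremum over the localized class reduces to a supremum of a linear empirical process indexed by the centered vector $f-f'$. The plan has three steps.

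\emph{Uniform Lipschitz bound.} Set $u=f-f'$ and $z_t=f'+tu$. Then $\nabla_f\Gamma_f(y)=\Pi(f,\alpha,y)$, the probability vector from \eqref{eqn:pi}. The mean value theorem gives
\[
\Gamma_f(y)-\Gamma_{f'}(y)=\int_0^1\langle \Pi(z_t,\alpha,y),u\rangle\,\ud t .
\]
Since $\Gamma_{f+c\mathbf 1}=\Gamma_f+c$ and integration against $Q-Q_n$ kills constants, we may replace $u$ by $u-c\mathbf 1$ with $c$ the midrange of $u$. This gives $|\Gamma_f(y)-\Gamma_{f'}(y)-c|\le \Var_\infty(u)^{1/2}\le\tau$ for every $y$.

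\emph{Linearization.} I would write
\[
\Gamma_f-\Gamma_{f'}-c=\sum_k (u_k-c)\,w_k(y),\qquad w_k(y)=\int_0^1\Pi_k(z_t,\alpha,y)\,\ud t .
\]
The difficulty is that the $w_k$ depend on $(f,f')$, so the class is not a fixed finite-dimensional linear span. Here the structure of the quadratic cost helps. $\Pi_k(z,\alpha,y)$ is a softmax of affine functions of $y$: $\log\alpha_k+(z_k-\tfrac12\|x_k\|^2+\langle x_k,y\rangle)/\sigma^2$ after cancelling $\|y\|^2$. For fixed parameters it therefore depends on $y$ only through the $K$-dimensional projection $(\langle x_k,y\rangle)_k$. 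The class $\{\Gamma_f-\Gamma_{f'}\}$ is thus a class of functions of $V^\top y$, with $V=[x_1,\dots,x_K]$, that are bounded by $\tau$ and Lipschitz in the finite-dimensional parameter $(f,f')$ with constant $1$ in sup-norm. This is the step I expect to be the main obstacle: obtaining a dimension-free entropy bound for the class.

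My first attempt would be a direct covering argument. Restrict $(f,f')$ to a bounded box; because $\Gamma$ is shift-equivariant, $f$ may be normalized and $u$ lies in a $\tau$-ball of a $(K-1)$-dimensional space. The sup-norm Lipschitz property then gives $\epsilon$-covers of size $(C\tau/\epsilon)^{K}$ for the $u$-part. Dudley's entropy integral with envelope $\tau$ then gives
\[
\E\sup\Bigl|\int(\Gamma_f-\Gamma_{f'})\,\ud(Q-Q_n)\Bigr|\lesssim \tau\sqrt{K/n}.
\]
The range of $f$ itself, as opposed to the difference $u$, enters only through a separate bound on $\|f\|_\infty$ for the relevant potentials. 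If that range causes trouble, I would instead fix $f$ at the population optimum. The peeling argument in Lemma \ref{lemma:varbound1} only needs the supremum over $f'$ with $f$ fixed, so the class is indexed solely by $u\in\mathbb R^K$ with $\Var_\infty(u)\le\tau^2$. Covering this set costs $(3\tau/\epsilon)^K$ under $\Var_\infty^{1/2}$, and the sup-norm Lipschitz bound transfers it to $L^\infty$. The constant $C_{\underline\alpha,R}$ absorbs any normalization.

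\emph{Higher moments.} For \eqref{eq:empirical2}, I would use the fact that the class has envelope $\tau$ and is uniformly bounded. Talagrand/Bousquet concentration, or the moment form of the bounded differences inequality, gives $\|Z-\E Z\|_{L^p}\lesssim \sqrt p\,\tau/\sqrt n+p\,\tau/n$, where $Z$ denotes the supremum. Combining this with the expectation bound yields $\E Z^p\le C_p(\tau\sqrt{K/n})^p$.
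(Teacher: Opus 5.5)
Your proof is correct and is essentially the paper's route. The paper simply cites Lemma F.1 of Pooladian et al., whose proof plugs a covering-number bound for the class $\{\Gamma_f-\Gamma_{f'}\}$ into a generic empirical-process bound, which is exactly your covering-plus-Dudley argument; your bounded-differences step supplies the $p$-th moment version, which the paper does not argue separately.

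What your version adds is the uniform-in-$y$ estimate $|\Gamma_f(y)-\Gamma_{f'}(y)-c|\le \Var_\infty(f-f')^{1/2}$, applied with $f$ fixed as in the peeling argument of Lemma~\ref{lemma:varbound1}. This makes explicit what the paper only asserts, namely that the compact-support argument transfers unchanged to unbounded $Q$, and it even gives a universal constant. The detour through $V^\top y$ and the bounded box for $f$ is therefore unnecessary.
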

\begin{proof}
This is simply a re-statement of Lemma F.1 in \cite{pooladian2023minimax}, whose proof is based on controlling the covering numbers of the family of functions $\Gamma_f$ along with a generic bound on the empirical process (Lemma H.3 in \cite{pooladian2023minimax}), and we only comment on two minor differences: first, our $\Gamma_u$ includes dependence in $\alpha_k$. As we assume that $\underline{\alpha}>0$ throughout, nothing substantially changes. Second, our definition of $\Gamma_u(y)$ contains the term $\lVert x_k-y\rVert^2$ instead of an inner product. This is immaterial because i) the quadratic terms $\lVert y\rvert^2 $ cancel out in the difference $\Gamma_f(y)-\Gamma_{f'}(y)$, and the terms $\lVert x_k\rVert^2$ will at worst induce a dependency of the constant in $R$.
\end{proof}

 \begin{lemma}\label{lemma:multinomial} Let $P$ be a discrete measure supported on $K$ atoms with weights $\alpha$, and let $P_n$ be the corresponding empirical measure with weights $\alpha^n$. Then,  
$$\E\left(\chi^2(P_n\|P)\right)=\frac{K-1}{n}.$$
Consequently,
$$\E\left(\lVert \alpha^n-\alpha\lVert^2_\infty\right)\leq \E\left(\chi^2(P_n\|P)\right)\leq \frac{K-1}{n}.$$
Moreover, for each $q\geq 1$, $$\mathbb E\|\alpha^n-\alpha\|_\infty^{2q}
\lesssim n^{-q},$$
where the underlying constant only depends on $K$ and $q$.
Additionally, let $\Lambda_n$ be the event $\alpha^n_k\geq \alpha_k/2,\forall k\in[K]$.  Then, 
$$\Pb(\Lambda^c_n)\leq K \exp(-\underline{\alpha}nc),$$
for some $c>0$.
\end{lemma}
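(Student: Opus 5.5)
We need to prove Lemma \ref{lemma:multinomial}, which has four parts: the exact expectation of $\chi^2(P_n\|P)$, the bound on $\E\|\alpha^n-\alpha\|_\infty^2$, the higher moments, and the tail bound on $\Lambda_n^c$. The plan is to treat the counts $n\alpha^n_k\sim\mathrm{Bin}(n,\alpha_k)$ directly in each part.

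\textbf{Exact expectation of $\chi^2$.} Write
$$\chi^2(P_n\|P)=\sum_{k=1}^K \frac{(\alpha_k^n-\alpha_k)^2}{\alpha_k}.$$
Since $\E(\alpha^n_k-\alpha_k)^2=\alpha_k(1-\alpha_k)/n$, linearity gives
$$\E\chi^2(P_n\|P)=\sum_k(1-\alpha_k)/n=(K-1)/n.$$

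\textbf{Comparison with $\|\alpha^n-\alpha\|_\infty^2$.} Since $\alpha_k\le 1$, for every $k$ we have $(\alpha^n_k-\alpha_k)^2\le(\alpha^n_k-\alpha_k)^2/\alpha_k\le\chi^2(P_n\|P)$. Taking the maximum over $k$ and then expectations yields the second display.

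\textbf{Higher moments.} Bound $\|\alpha^n-\alpha\|_\infty^{2q}\le\sum_k|\alpha^n_k-\alpha_k|^{2q}$. Each $\alpha^n_k-\alpha_k=\frac1n\sum_i(\mathbf 1\{X_i=x_k\}-\alpha_k)$ is an average of i.i.d.\ centered variables bounded by $1$. Either Rosenthal's inequality (Lemma \ref{lemma:rosenthal}) or Hoeffding's subGaussian tail $\Pb(|\alpha^n_k-\alpha_k|>t)\le 2e^{-2nt^2}$ integrated via the layer-cake formula gives $\E|\alpha^n_k-\alpha_k|^{2q}\le C_q n^{-q}$. Summing over $k$ gives a constant depending only on $K$ and $q$.

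\textbf{Tail bound on $\Lambda_n^c$.} By a union bound,
$$\Pb(\Lambda_n^c)\le\sum_k\Pb\big(\alpha^n_k<\alpha_k/2\big).$$
Apply the multiplicative Chernoff lower-tail bound to $n\alpha^n_k\sim\mathrm{Bin}(n,\alpha_k)$:
$$\Pb\big(n\alpha^n_k<(1-\tfrac12)n\alpha_k\big)\le\exp(-n\alpha_k/8)\le\exp(-n\underline\alpha/8).$$
This gives the claim with $c=1/8$.

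The only step needing some care is this last one. Hoeffding's inequality would give only $\exp(-n\underline\alpha^2/2)$, so the multiplicative Chernoff (Bernstein-type) form is needed to obtain linear dependence on $\underline\alpha$ in the exponent. Everything else is routine.
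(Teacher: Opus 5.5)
Your proof is correct and follows the paper's route: the paper proves only the $2q$-moment bound, and it does so as you do, via Rosenthal's inequality applied to each coordinate $\alpha^n_k-\alpha_k$ and then summed over $k$. For the exact $\chi^2$ identity, the comparison with $\|\alpha^n-\alpha\|_\infty^2$, and the bound on $\Pb(\Lambda_n^c)$, the paper cites standard lemmas of Pooladian et al.; your binomial-variance computation and multiplicative Chernoff bound (which yields the universal constant $c=1/8$, whereas Hoeffding alone would give a $c$ depending on $\underline\alpha$) simply write those cited steps out.
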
 

\begin{proof}
We only show the statement for arbitrary $q$, the other ones are essentially Lemmas E.2 and H.2 in \cite{pooladian2023minimax}.
For each \(k\in[K]\), write
\[
\alpha_k^n-\alpha_k
=
\frac1n\sum_{i=1}^n \xi_{i,k},
\qquad
\xi_{i,k}:=\mathbf 1\{X_i=x_k\}-\alpha_k.
\]
The  \(\xi_{i,k}\)'s are i.i.d., centered, and satisfy $|\xi_{i,k}|\le 1$.  By Rosenthal's inequality (Lemma \ref{lemma:rosenthal}), and using that all moments $\E|\xi_{i,k}|^{m}\leq 1$, we obtain that for every fixed \(q\ge1\),
\[
\mathbb E\left|\sum_{i=1}^n \xi_{i,k}\right|^{2q}
\leq C_{2q}
\max\left\{
\sum_{i=1}^n \mathbb E|\xi_{i,k}|^{2q},
\left(\sum_{i=1}^n \mathbb E\xi_{i,k}^2\right)^q
\right\} \lesssim n^q.
\]
Dividing by \(n^{2q}\), we get
$
\mathbb E|\alpha_k^n-\alpha_k|^{2q}
\lesssim n^{-q}$ and
consequently,
\[
\mathbb E\|\alpha^n-\alpha\|_\infty^{2q} = \E \left[\max_{k\in[K]}|\alpha_k^n-\alpha_k|^{2q}\right]
\le 
\sum_{k=1}^K
\mathbb E|\alpha_k^n-\alpha_k|^{2q}
\lesssim n^{-q}.
\]
\end{proof}
\begin{lemma}[Rosenthal's inequality][Theorem 3 in \cite{rosenthal1970subspaces}] \label{lemma:rosenthal} Let $X_i\in\mathbb{R}^d$ be an i.i.d sequence of zero mean random variables with finite $m$-th moment, $m\geq 2$. Then,
$$\E\Big\lvert \sum_{i=1}^n X_i\Big\rvert^m \leq C_m\max\Bigg\{\sum_{i=1}^n \E|X_i|^m,\left(\sum_{i=1}^n \E|X_i|^2\right)^{m/2}\Bigg\},$$
where $C_m$ is a constant that depends only on $m$.
\end{lemma}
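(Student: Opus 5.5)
We prove the inequality directly in the Euclidean (Hilbert) norm $|\cdot|$ on $\mathbb{R}^d$, so that $C_m$ does not depend on $d$. The route is symmetrization, then Khintchine--Kahane, then a moment inequality for sums of nonnegative variables. Applying the scalar inequality coordinatewise would instead bring in a factor $d^{m/2-1}$, which we must avoid because the lemma is used in dimension-free arguments. Write $S=\sum_{i=1}^n X_i$.

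\emph{Step 1 (symmetrization).} Let $X_i'$ be an independent copy of the sample and $\epsilon_i$ i.i.d.\ Rademacher signs independent of everything else. The $X_i$ are centered, so Jensen's inequality applied conditionally on $(X_i)$ gives
\[
\E|S|^m\le \E\Big|\sum_i (X_i-X_i')\Big|^m .
\]
The vector $(X_i-X_i')_i$ has the same law as $(\epsilon_i(X_i-X_i'))_i$. Using $|a-b|^m\le 2^{m-1}(|a|^m+|b|^m)$, we obtain $\E|S|^m\le 2^m\,\E|\sum_i\epsilon_iX_i|^m$.

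\emph{Step 2 (Khintchine--Kahane in Hilbert space).} Condition on the $X_i$. For fixed vectors $x_i$ we have $\E_\epsilon|\sum_i\epsilon_ix_i|^2=\sum_i|x_i|^2$. The Kahane--Khintchine inequality for Rademacher sums in a Banach space, here a Hilbert space, gives
\[
\E_\epsilon\Big|\sum_i\epsilon_ix_i\Big|^m\le K_m\Big(\sum_i|x_i|^2\Big)^{m/2},
\]
with $K_m$ depending only on $m$. Hence $\E|S|^m\le 2^mK_m\,\E\big(\sum_i Z_i\big)^{p}$, where $Z_i:=|X_i|^2\ge0$ and $p:=m/2\ge1$.

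\emph{Step 3 (nonnegative sums).} This is the step I expect to be the main obstacle. We show that for independent $Z_i\ge0$ and $p\ge1$,
\[
\E T^p\le C_p\max\Big\{\sum_i\E Z_i^p,\Big(\sum_i\E Z_i\Big)^p\Big\},\qquad T:=\sum_i Z_i .
\]
Write $T^p=\sum_i Z_i\,T^{p-1}$ and set $T_{-i}:=T-Z_i$. Then $T^{p-1}\le 2^{p-1}(Z_i^{p-1}+T_{-i}^{p-1})$ when $p\ge2$; for $1\le p<2$ use subadditivity of $t\mapsto t^{p-1}$ instead. By independence of $Z_i$ and $T_{-i}$, and since $T_{-i}\le T$,
\[
\E T^p\le c_p\Big(\sum_i\E Z_i^p+\sum_i\E Z_i\;\E T^{p-1}\Big).
\]
Jensen's inequality gives $\E T^{p-1}\le(\E T^p)^{(p-1)/p}$. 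Setting $x=\E T^p$, $A=\sum_i\E Z_i^p$ and $B=\sum_i\E Z_i$, this reads $x\le c_p(A+Bx^{1-1/p})$. The dichotomy is as follows.
\begin{itemize}
\item If $c_pBx^{1-1/p}\le x/2$, then $x\le 2c_pA$.
\item Otherwise $x<(2c_pB)^p$.
\end{itemize}
In both cases $x\le C_p\max\{A,B^p\}$. Finiteness of $x$ follows from the finite $m$-th moments, so the absorption is legitimate.

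\emph{Step 4 (conclusion).} Substitute $\E Z_i^p=\E|X_i|^m$ and $\E Z_i=\E|X_i|^2$ into Step 3. This gives $\E|S|^m\le C_m\max\{\sum_i\E|X_i|^m,(\sum_i\E|X_i|^2)^{m/2}\}$ with $C_m=2^mK_mC_{m/2}$, independent of $d$ and $n$. This is exactly the claim; alternatively one may simply cite Theorem 3 of \cite{rosenthal1970subspaces} for the scalar case together with its Hilbert-space extension.
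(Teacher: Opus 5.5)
Your proof is correct, and it takes a different route from the paper, which gives no argument and simply cites Theorem 3 of \cite{rosenthal1970subspaces}.

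That theorem concerns independent centered \emph{real-valued} variables, so the citation literally covers only $d=1$. This costs the paper nothing, because the lemma is only applied to scalar sums: the centered exponentials in \Cref{lemma:sigmaprob} and the centered indicators $\xi_{i,k}$ in \Cref{lemma:multinomial}.

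Your argument is the standard reduction for Rosenthal's inequality, with the scalar Khintchine inequality upgraded to its vector form. You symmetrize, apply Kahane--Khintchine conditionally on the sample, and then prove the nonnegative Rosenthal bound for $Z_i=|X_i|^2$ at exponent $p=m/2$ via the self-bounding inequality $x\le c_p(A+Bx^{1-1/p})$. The Hilbert-space identity $\E_\epsilon|\sum_i\epsilon_ix_i|^2=\sum_i|x_i|^2$ is what turns Kahane's inequality into a square-function bound.

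This proves the lemma as stated: in the Euclidean norm on $\mathbb{R}^d$, with $C_m=2^mK_mC_{m/2}$ independent of $d$ and $n$. It also needs only independence, not identical distribution.

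The individual steps check out. In Step 3 you correctly use the independence of $Z_i$ and $T_{-i}$ together with $T_{-i}\le T$. The absorption step needs $\E T^p<\infty$ a priori, and this follows from $\E T^p\le n^{m/2-1}\sum_i\E|X_i|^m$.

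What each approach buys: your route needs Kahane's inequality as an external tool of roughly the same depth as the result being cited. The paper's citation is shorter, but it leaves the vector-valued statement formally unsupported.
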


 \begin{lemma}[Lemmas 2 and 4 in \cite{mena2019statistical}]\label{lemma:MNW}

 The following statements hold:
\begin{itemize}
    \item[(a)] 
For each $\alpha>0$, if $t=\E_\mu\left(\exp\left(\frac{\lVert Y\rVert^2}{\alpha}\right)\right)$ is finite, then $\mu$ is $t\frac{\alpha}{2d}$-subGaussian. 
\item[(b)] Consequently, if $Q$ is $\varepsilon^2$ subGaussian then $Q_n$ is subGaussian with parameter 
$$\varepsilon^2_n := \frac{1}{n}\sum_{i=1}^n\exp\left(\frac{\lVert Y_i\rVert^2}{2d\varepsilon^2}\right).$$
\item[(c)] $Q,Q_n$ are uniformly subGaussian for some a.e. finite random subGaussianity proxy $\varepsilon^2_u$
\item[(d)] If $\tilde{\varepsilon}^2$ is the smallest such subGaussianity proxy, then for each integer $m$
$$\E(\tilde{\varepsilon}^{2m})\leq 2m^m\varepsilon^{2m}.$$
\end{itemize}
\end{lemma}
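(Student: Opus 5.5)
The plan is to prove the four items in order. Parts (b)--(d) are consequences of (a) combined with standard facts about averages of i.i.d.\ variables. Throughout, subGaussianity is read in the norm-based sense of \cite{mena2019statistical}: $\mu$ is $s^2$-subGaussian when $\E_\mu \exp(\lVert Y\rVert^2/(2ds^2))\le 2$. This is the notion for which the constants in (a)--(d) are stated. At the end I explain how it relates to the moment-generating formulation in \textbf{(B)}.

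For (a), set $s^2:=t\alpha/(2d)$. Since $t\ge 1$, we have $1/t\le 1$, so Jensen's inequality applied to the concave map $u\mapsto u^{1/t}$ gives
\[
\E_\mu \exp\!\Big(\tfrac{\lVert Y\rVert^2}{2ds^2}\Big)=\E_\mu\Big[\Big(e^{\lVert Y\rVert^2/\alpha}\Big)^{1/t}\Big]\le t^{1/t}\le 2 .
\]
The last step uses $t^{1/t}\le e^{1/e}<2$. This proves (a).

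For (b), apply (a) with $\mu=Q_n$ and $\alpha=2d\varepsilon^2$. Then $t=\varepsilon_n^2$ in the notation of the statement, so $Q_n$ is $\varepsilon_n^2\varepsilon^2$-subGaussian. I will state this as "subGaussian with parameter $\varepsilon_n^2$", up to the fixed factor $\varepsilon^2$, matching the normalization of \cite{mena2019statistical}.

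For (c), note that $\varepsilon_n^2$ is an average of i.i.d.\ variables with mean $\E_Q \exp(\lVert Y\rVert^2/(2d\varepsilon^2))\le 2$. By the strong law of large numbers, $\varepsilon_n^2\to$ this mean almost surely, so $\sup_n\varepsilon_n^2<\infty$ almost surely. The proxy $\varepsilon_u^2:=\varepsilon^2\max\{1,\sup_n\varepsilon_n^2\}$ therefore works simultaneously for $Q$ and every $Q_n$.

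Part (d) is the main obstacle. The naive approach fails: bounding $\E\sup_n(\varepsilon_n^2)^m$ by Doob requires $\E\exp(m\lVert Y\rVert^2/(2d\varepsilon^2))$, which may be infinite. I would first rerun (a) and (b) with the larger scale $\alpha=2md\varepsilon^2$, as in the proof of Lemma~\ref{lemma:sigmaprob}. This makes $Q_n$ $(m\varepsilon^2\tau_{m,n})$-subGaussian, where $\tau_{m,n}=\frac1n\sum_i e^{\lVert Y_i\rVert^2/(2md\varepsilon^2)}$. The $m$-th moment of each summand is now $\E e^{\lVert Y\rVert^2/(2d\varepsilon^2)}\le 2$, which is finite. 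The sequence $(\tau_{m,n})_n$ is a reverse martingale, so Doob's $L^m$ maximal inequality gives
\[
\E\sup_n\tau_{m,n}^m\le \Big(\tfrac{m}{m-1}\Big)^m\E\tau_{m,1}^m\le 2e .
\]
Hence $\E\tilde\varepsilon^{2m}\le m^m\varepsilon^{2m}\E\sup_n\tau_{m,n}^m\lesssim m^m\varepsilon^{2m}$.

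I expect the bookkeeping of this constant, namely getting exactly $2m^m$, to be the delicate part. If Doob's constant spoils it, I would follow \cite{mena2019statistical} and accept a constant $C\,m^m$ instead. Finally, to connect with the moment-generating formulation in \textbf{(B)}, I would invoke the standard equivalence between the two notions. That equivalence holds up to absolute constants, which does not affect any downstream use of this lemma.
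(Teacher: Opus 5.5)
The paper gives no argument beyond citing \cite{mena2019statistical}. Measured against the statement, your (a)--(c) are sound. (a) is the standard Jensen argument. You are right that the lemma only holds for the norm-based notion $\E_\mu e^{\lVert Y\rVert^2/(2ds^2)}\le 2$. You also correctly notice that (b) as written drops a factor: the right proxy is $\varepsilon^2\varepsilon_n^2$.

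\textbf{Gap in (d).} The gap is in (d): you do not obtain the stated constant. You read ``uniformly'' as uniformly over all $n$, and this forces a reverse-martingale maximal inequality. That inequality yields $\E\sup_n\tau_{m,n}^m\le 2(m/(m-1))^m$. This exceeds your claimed $2e$, because $(1+\frac{1}{m-1})^m>e$; at $m=2$ it equals $8$. It also gives nothing for $m=1$. However, the quantity the paper actually uses is the smallest proxy valid simultaneously for $Q$ and the single measure $Q_n$ at the given $n$. Two facts in Lemma~\ref{lemma:sigmaprob} show this: its proof only uses $\tau_{m,n}$, and its tail bound decays in $n$, which only makes sense for an $n$-dependent $\tilde\varepsilon$. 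With that reading the missing step is one line. By convexity of $u\mapsto u^m$,
\[
\tau_{m,n}^m=\Big(\frac1n\sum_{i=1}^n e^{\lVert Y_i\rVert^2/(2md\varepsilon^2)}\Big)^m\le\frac1n\sum_{i=1}^n e^{\lVert Y_i\rVert^2/(2d\varepsilon^2)},
\]
so $\E\tau_{m,n}^m\le 2$. Since $\tau_{m,n}\ge 1$, part (a) with $\alpha=2md\varepsilon^2$ gives $\tilde\varepsilon^2\le\max\{\varepsilon^2,m\varepsilon^2\tau_{m,n}\}=m\varepsilon^2\tau_{m,n}$. Hence $\E\tilde\varepsilon^{2m}\le 2m^m\varepsilon^{2m}$ exactly, for every $m\ge 1$, with no maximal inequality. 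Part (c) then needs no law of large numbers, since $\varepsilon^2\varepsilon_n^2$ is finite for every realization.

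\textbf{The equivalence claim is false.} Your closing claim, that the norm-based notion and the moment-generating notion in \textbf{(B)} agree up to absolute constants, is false in dimension $d$. One direction does hold. Under \textbf{(B)} (centered, proxy $\varepsilon^2$), integrating against an independent standard Gaussian gives $\E e^{\lVert Y\rVert^2/(4d\varepsilon^2)}\le(1-\frac{1}{2d})^{-d/2}\le e^{1/2}$. So the norm-based proxy is at most $2\varepsilon^2$. The converse loses a factor $d$, and this is sharp. For $Y=(Z,0,\ldots,0)$ with $Z\sim\mathcal N(0,1)$, the norm-based proxy is $4/(3d)$ while the moment-generating proxy is $1$. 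In particular, (a) is false if read in the sense of \textbf{(B)}. Taking $\alpha=4$ gives $t=\sqrt2$, and the claimed proxy $2\sqrt2/d$ is below $1$ once $d\ge 3$. So the lemma must stay in the norm-based sense, and converting $\tilde\varepsilon$ back to \textbf{(B)} is not dimension-free. Contrary to your remark, this matters downstream, for instance when Proposition~\ref{prop:potentialbound} is applied with $\tilde\varepsilon$. That concerns how the lemma is used rather than its proof.
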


Essentially, the above variance plays the role of the usual Euclidean norm, but accounts for the fact that potentials are only defined up to additive constants. Finally, we are able to pass from the convergence of potentials to barycentric projections by relying on the following stability bound

\section{Proofs and auxiliary lemmata for Sinkhorn-EM}\label{app:sinkhorn}

In this appendix we present the proofs for the convergence sample-based Sinkhorn-EM, as established in \Cref{thm:convergence-sample-SEM}. We will denote $\mathcal{B} := B(\theta^*, \|\theta^*\|/4)$. 

We start with the proof of the main result, and then we will prove the intermediate results required for it. 

\subsection{Proof of \Cref{thm:convergence-sample-SEM}}

\begin{proof}
The proof uses the standard argument from finite-sample analyses of EM \citep{BalWaiYu17,Dwivedi2018}; we include it for completeness. The only difference is that we use the uniform deviation bound from \Cref{prop:sample-based-sem-iterates}. 

Step 1: Conditioning on the Statistical Concentration. \\
By \Cref{prop:sample-based-sem-iterates}, there exists an event holding with probability at least $1 - \delta - n^{-c_1d}-c_2n^{-2}$ upon which the empirical updates converge uniformly to the population updates over the entire basin $\mathcal{B}$. On this event, the maximum statistical error is
\begin{equation}
    \eta_n := \sup_{\theta \in  \mathcal{B}} \|F_n(\theta, \alpha_n(\theta)) - F(\theta, \alpha(\theta))\| \le C \sqrt{\frac{d \log(n) + \log(1/\delta)}{n}}.
\end{equation}
We condition on this high-probability event for all subsequent steps.

Step 2: Inductive Hypothesis \\
We propose that for any iteration $t \in \{0, \dots, T\}$, the following two conditions hold:
\begin{enumerate}
    \item \textbf{Basin Stability:} $\hat\theta^t_{SEM} \in \mathcal{B}$.
    \item \textbf{Error Bound:} $\|\hat\theta^t_{SEM} - \theta^\ast\| \le \kappa^t \|\hat\theta^0_{SEM} - \theta^\ast\| + \eta_n \sum_{i=0}^{t-1} \kappa^i$.
\end{enumerate}

\textit{Base Case ($t=0$):} By the initialization assumption, $\|\hat\theta^0_{SEM} - \theta^\ast\| \le \|\theta^\ast\|/4$, so the iterate is in the basin. The summation in the error bound is empty, so the condition holds trivially.

Step 3: Inductive Step \\
Assume the hypothesis holds for step $t$. We analyze the error at $t+1$ by decomposing it into statistical error and population-level contraction using the triangle inequality:
\begin{align}
    \|\hat\theta^{t+1}_{\mathrm{SEM}} - \theta^\ast\| &= \|F_n(\hat\theta^t_{\mathrm{SEM}}, \alpha_n(\hat\theta^t_{\mathrm{SEM}})) - \theta^\ast\| \nonumber \\
    &\le \underbrace{\|F_n(\hat\theta^t_{\mathrm{SEM}}, \alpha_n(\hat\theta^t_{\mathrm{SEM}})) - F(\hat\theta^t_{\mathrm{SEM}}, \alpha(\hat\theta^t_{\mathrm{SEM}}))\|}_{\text{Statistical Error}} + \underbrace{\|F(\hat\theta^t_{\mathrm{SEM}}, \alpha(\hat\theta^t_{\mathrm{SEM}})) - \theta^\ast\|}_{\text{Population Contraction}}. \label{eq:triangle_long}
\end{align}

We bound these terms individually:
\begin{enumerate}
    \item Statistical Error: Because $\hat\theta^t \in \mathcal{B}$ by the inductive hypothesis, this term is bounded by the uniform supremum $\eta_n$ from Step 1.
    \item Population Contraction: By \Cref{prop:SEMcoincideEM}, the population Sinkhorn-EM and EM operators coincide under our Gaussian mixture model. Furthermore, the population EM operator $F$ is a uniform $\kappa$-contraction for all $\theta \in \mathcal{B}$ under our assumptions as proved in \cite{BalWaiYu17}. Thus, $\|F(\hat\theta^t, \alpha(\hat\theta^t)) - \theta^\ast\| \le \kappa \|\hat\theta^t - \theta^\ast\|$.
\end{enumerate}

Substituting these back into \eqref{eq:triangle_long} and applying the inductive hypothesis for step $t$:
\begin{align}
    \|\hat\theta^{t+1}_{SEM} - \theta^\ast\| &\le \eta_n + \kappa \left( \kappa^t \|\hat\theta^0 - \theta^\ast\| + \eta_n \sum_{i=0}^{t-1} \kappa^i \right) \nonumber \\
    &= \kappa^{t+1} \|\hat\theta^0 - \theta^\ast\| + \eta_n \sum_{i=0}^{t} \kappa^i.
\end{align}
This verifies the recursive error bound for step $t+1$.

To complete the induction, we must show $\hat\theta^{t+1}_{SEM}$ does not leave $\mathcal{B}$. The maximum distance reached by the error bound is:
\begin{equation}
    \|\hat\theta^{t+1}_{SEM} - \theta^\ast\| \le \kappa^{t+1} \|\hat\theta^0 - \theta^\ast\| + \frac{\eta_n}{1-\kappa}.
\end{equation}
Since $\kappa < 1$ and $\|\hat\theta^0 - \theta^\ast\| \le \|\theta^\ast\|/4$, a sufficient condition for staying in the basin is $\eta_n + \kappa  \|\theta^\ast\|/4 \le \|\theta^\ast\|/4$, which simplifies to $\eta_n \le (1-\kappa) \|\theta^\ast\|/4$. Since $\eta_n$ decrease as $n^{-1/2}$,  for $n$ sufficiently large, the statistical noise $\eta_n$ is strictly bounded such that this inequality holds. Thus, $\hat\theta^{t+1}_{SEM} \in \mathcal{B}$, closing the induction.

The final result follows by substituting the sum of the geometric series and the definition of $\eta_n$.
\end{proof}

The main ingredients to prove the theorem above are \Cref{prop:sample-based-sem-iterates} (statistical error control) and \Cref{prop:SEMcoincideEM} (population contraction). We start proving \Cref{prop:sample-based-sem-iterates}.

\subsection{Proof of \Cref{prop:sample-based-sem-iterates}}

For this proof, we use many intermediate results which are deferred to the next subsection.   

\begin{proof}[Proof of \Cref{prop:sample-based-sem-iterates}.]
By the triangle inequality, we decompose the error as follows
\begin{align*}
    \|F_n(\theta,\alpha_n(\theta))-F(\theta,\alpha(\theta))\|
&\leq
\|F_n(\theta,\alpha_n(\theta))-F(\theta,\alpha_n(\theta))\|\\
&+ \|F(\theta,\alpha_n(\theta)) - \E[F(\theta,\alpha_n(\theta))]\|\\
&+ \|\E[F(\theta,\alpha_n(\theta))]-F(\theta,\alpha(\theta))\|.
\end{align*}

Now, we bound each term separately. 

\paragraph{First term.} We start bounding the first term in the decomposition with  \Cref{cor:data_dependent_alpha}. \Cref{lem:alpha_n_bound} gives a high probability bound for $\alpha_n(\theta)$ away from $0$ and $1$. Therefore, since $\mathcal{B} \subset B(0,2\|\theta^*\|)$, \Cref{cor:data_dependent_alpha} applies to the data-dependent map
\(\widehat\alpha_n=\alpha_n\) on $\mathcal T=\mathcal B$. It gives that, with probability at least
\(1-n^{-cd} - Cn^{-2}\)
\[
\sup_{\theta\in \mathcal{B}}
\left\|
F_n(\theta,\alpha_n(\theta))
-
F(\theta,\alpha_n(\theta))
\right\|
\le
C
\left(
\|\theta^*\|
+
\frac{\|\theta^*\|^2}{\sigma^2} + 1
\right)
\sqrt{\frac{d\log n}{n}}.
\]
Absorbing the displayed factor into the constant \(C\), this is with probability at least
\(1-n^{-cd} - Cn^{-2}\)
\[
\sup_{\theta\in \mathcal{B}}
\left\|
F_n(\theta,\alpha_n(\theta))
-
F(\theta,\alpha_n(\theta))
\right\|
\le
C\sqrt{\frac{d\log n}{n}}.
\]

\paragraph{Second term.} The bound for second term of the decomposition is a consequence of \Cref{prop:uniform-centered-sinkhorn-update}, which states that with probability at least $1-\delta-Cn^{-2}$,
\[\sup_{\theta\in \mathcal{B}}
\left\|
F(\theta,\alpha_n(\theta))
-
\E[F(\theta,\alpha_n(\theta))]
\right\|
\le
C
\max\left\{\sigma,\|\theta^\ast\|\right\}
\exp\left(C\frac{\|\theta^\ast\|^2}{\sigma^2}\right)
\sqrt{\frac{d\log(en)+\log(1/\delta)}{n}}.
\]

\paragraph{Third term.} This term was already handled in \Cref{subsec:finite_sample_SEM}. There, we showed that 
\[\left\|\E[F(\theta,\alpha_n(\theta))]-F(\theta,\alpha(\theta))\right\|
\le
\frac{1}{\|\theta\|}
\left(\E\|Y\|^2\right)^{1/2}
\left(
\E\left\|\back_n^\theta(Y)-\back^\theta(Y)\right\|^2
\right)^{1/2}.\]
Moreover, under the balanced Gaussian mixture,
\[
\E\|Y\|^2=d\sigma^2+\|\theta^\ast\|^2,
\]
and 
\[\frac{1}{\|\theta\|} \lesssim \frac{1}{\|\theta^*\|}\]
for all $\theta\in \mathcal{B}$. Thus, by the barycentric-map bound from \Cref{theo:barybound}, 
\[
\sup_{\theta\in \mathcal{B}}
\left\|
\E[F(\theta,\alpha_n(\theta))]
-
F(\theta,\alpha(\theta))
\right\|
\lesssim
\frac{(d\sigma^2+\|\theta^\ast\|^2)^{1/2}}
{\|\theta^\ast\|}
\,\sqrt{\frac{1}{n}}.
\]
After absorbing the factors depending on
\(\sigma,\|\theta^\ast\|\) into \(C\),
\[
\sup_{\theta\in \mathcal{B}}
\left\|
\E[F(\theta,\alpha_n(\theta))]
-
F(\theta,\alpha(\theta))
\right\|
\le
C
\sqrt{\frac{d}{n}}.
\]

Combining the three bounds and absorbing constants, we obtain that with probability at least \(1-n^{-c_1d}-c_2n^{-2}-\delta\),
\[
\sup_{\theta\in \mathcal{B}}
\left\|
F_n(\theta,\alpha_n(\theta))
-
F(\theta,\alpha(\theta))
\right\|
\le
C\sqrt{\frac{d\log n + \log(1/\delta)}{n}},
\]
as claimed.
\end{proof}

\subsection{Proof of \Cref{prop:SEMcoincideEM}}

\begin{proof}
Since \(\alpha=\frac12\), the population data distribution is
\[
Q^\ast
=
\frac12\mathcal N(\theta^\ast,\sigma^2 I_d)
+
\frac12\mathcal N(-\theta^\ast,\sigma^2 I_d),
\]
which is symmetric under \(y\mapsto -y\).

We first show that, for every \(\theta\in\mathbb R^d\),
\[
\alpha(\theta)=\frac12.
\]
Recall that \(\alpha(\theta)\) is defined from an optimal semidual
potential
\[
f(\theta)=(f_1(\theta),f_2(\theta))\in\mathbb R^2
\]
by
\[
\alpha(\theta)
=
\frac{\alpha e^{f_1(\theta)/\sigma^2}}
{\alpha e^{f_1(\theta)/\sigma^2}
 +(1-\alpha)e^{f_2(\theta)/\sigma^2}}.
\]
Since here \(\alpha=\frac12\), it is enough to prove that
\[
f_1(\theta)=f_2(\theta).
\]

Let
\[
P_\theta=\frac12\delta_\theta+\frac12\delta_{-\theta}.
\]
The semidual objective for \(S(P_\theta,Q^\ast)\) is
\[
\Phi_\theta(f_1,f_2)
=
\frac12 f_1+\frac12 f_2
-\sigma^2
\int
\log\!\left(
\frac12
e^{(f_1-\frac12\|y-\theta\|^2)/\sigma^2}
+
\frac12
e^{(f_2-\frac12\|y+\theta\|^2)/\sigma^2}
\right)
\,\ud Q^\ast(y).
\]
Using the symmetry of \(Q^\ast\) and the identities
\[
\|-y-\theta\|^2=\|y+\theta\|^2,
\qquad
\|-y+\theta\|^2=\|y-\theta\|^2,
\]
a change of variables \(y\mapsto -y\) yields
\[
\Phi_\theta(f_1,f_2)=\Phi_\theta(f_2,f_1)
\qquad
\text{for all }(f_1,f_2)\in\mathbb R^2.
\]

Now let
\[
f(\theta)=(f_1(\theta),f_2(\theta))
\]
be the optimal semidual potential satisfying the gauge condition
\eqref{eq:gauge},
\[
\mathbb E_{X\sim P_\theta}[f(X)]=0.
\]
Since
\[
P_\theta=\frac12\delta_\theta+\frac12\delta_{-\theta},
\]
this condition becomes
\[
\frac12 f_1(\theta)+\frac12 f_2(\theta)=0.
\]
Thus the gauge condition is invariant under swapping the two
coordinates.

Because
\[
\Phi_\theta(f_1,f_2)=\Phi_\theta(f_2,f_1),
\]
if \((f_1(\theta),f_2(\theta))\) is an optimal potential satisfying
the gauge condition, then
\((f_2(\theta),f_1(\theta))\) is also an optimal potential satisfying
the same gauge condition. Since optimal semidual potentials are
unique up to additive constants, and the gauge condition fixes this
constant uniquely, we must have
\[
(f_1(\theta),f_2(\theta))
=
(f_2(\theta),f_1(\theta)).
\]
Hence
\[
f_1(\theta)=f_2(\theta).
\]
In fact, together with the gauge condition, this also gives
\[
f_1(\theta)=f_2(\theta)=0.
\]
Therefore,
\[
\alpha(\theta)=\frac12
\qquad
\text{for every }\theta\in\mathbb R^d.
\]

It follows that the population Sinkhorn-EM update reduces to
\[
\theta^{t+1}_{\mathrm{SEM}}
=
F\left(
\theta^t_{\mathrm{SEM}},
\alpha(\theta^t_{\mathrm{SEM}})
\right)
=
F\left(
\theta^t_{\mathrm{SEM}},\frac12
\right).
\]
But \(F(\theta,\frac12)\) is exactly the population EM update map for
the balanced symmetric two-Gaussian model \cite{xu2016globalEM,BalWaiYu17}. Since the two algorithms have the
same initialization and update map, induction yields
\[
\theta^t_{\mathrm{SEM}}=\theta^t_{\mathrm{EM}}
\]
for every \(t\geq 0\).
\end{proof}

\subsection{Other technical lemmata for Proposition \ref{prop:sample-based-sem-iterates}}

The bound of the first term in the decomposition in the proof of \Cref{prop:sample-based-sem-iterates} relies on the a Corollary from a result in \cite{weinberger2022algorithm} along with a bound on $\alpha_n(\theta)$, which we prove below.

\begin{corollary}[Mean-update concentration with a data-dependent weight]\label{cor:data_dependent_alpha}
Assume $\|\theta^\ast\|/\sigma\le C_\theta$, and let
$\mathcal T\subseteq B(0,2\|\theta^\ast\|)$ be deterministic.
Let $\widehat\alpha_n:\mathcal T\to[0,1]$ be any possibly data-dependent
map such that, for some $\alpha_0\in(0,1/2)$, with probability at least
$1-\beta$,
$\widehat\alpha_n(\theta)\in[\alpha_0,1-\alpha_0]$ for every
$\theta\in\mathcal T$. Set $C_\rho:=1-2\alpha_0$.
There exist constants $C,c>0$, depending only on $C_\theta$ and $C_\rho$,
such that for $n\ge C d\log n$ and $n\ge2$, with probability at least
$1-n^{-cd}-\beta$,
\[
\sup_{\theta\in\mathcal T}
\|F_n(\theta,\widehat\alpha_n(\theta))
 -F(\theta,\widehat\alpha_n(\theta))\|
\le C\left(\|\theta^\ast\|
 +\frac{\sigma}{2}\log\frac{1-\alpha_0}{\alpha_0}\right)
\sqrt{\frac{d\log n}{n}}.
\]
\end{corollary}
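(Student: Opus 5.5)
The plan is to reduce the statement to a uniform concentration bound for the EM mean-update over both the location parameter and the weight. That bound is established in \cite{weinberger2022algorithm} for fixed weights, and we then plug in the data-dependent weight on the good event. Write $F_n(\theta,\alpha)-F(\theta,\alpha)=\frac1n\sum_i Y_i(2\Psi(Y_i,\theta,\alpha)-1)-\E[Y(2\Psi(Y,\theta,\alpha)-1)]$.

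Reparametrize the responsibility as
\[
2\Psi(y,\theta,\alpha)-1=\tanh\!\Big(\frac{\langle\theta,y\rangle}{\sigma^2}+\frac12\log\frac{\alpha}{1-\alpha}\Big).
\]
This identity follows since the quadratic terms $\|y\|^2,\|\theta\|^2$ cancel in the ratio. For $\alpha\in[\alpha_0,1-\alpha_0]$, the offset $b=\frac12\log\frac{\alpha}{1-\alpha}$ ranges over the bounded interval $|b|\le \frac12\log\frac{1-\alpha_0}{\alpha_0}$. The class of functions
\[
y\mapsto y\tanh(\langle\theta,y\rangle/\sigma^2+b),\qquad \theta\in\mathcal T,\ |b|\le b_0,
\]
is therefore a $(d+1)$-parameter Lipschitz family. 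I will invoke the uniform concentration result of \cite{weinberger2022algorithm} for the unbalanced EM operator, or reprove it if needed. Their argument is: truncate $\|Y_i\|\lesssim \sigma\sqrt{d\log n}+\|\theta^\ast\|$ on an event of probability $1-n^{-cd}$; project onto a $1/2$-net of unit directions $u$; build an $\epsilon$-net of $\mathcal T\times[-b_0,b_0]$ using that $\tanh$ is $1$-Lipschitz, so that perturbing $(\theta,b)$ moves each summand by at most $\|Y_i\|(\|Y_i\|\|\Delta\theta\|/\sigma^2+|\Delta b|)$; and apply Bernstein/subexponential bounds at each net point for the variables $\langle u,Y\rangle\tanh(\cdot)$, whose $\psi_1$-norm is of order $\sigma+\|\theta^\ast\|$.

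The dependence on $\alpha_0$ enters only through the size of the $b$-range, which explains the additive term $\frac{\sigma}{2}\log\frac{1-\alpha_0}{\alpha_0}$ in the constant. After rescaling, the offset behaves like a shift $\sigma b$ in the direction of $\theta$. A union bound over the nets gives, with probability $1-n^{-cd}$, that
\[
\sup_{\theta\in\mathcal T,\ \alpha\in[\alpha_0,1-\alpha_0]}\|F_n(\theta,\alpha)-F(\theta,\alpha)\|\le C\Big(\|\theta^\ast\|+\tfrac{\sigma}{2}\log\tfrac{1-\alpha_0}{\alpha_0}\Big)\sqrt{\tfrac{d\log n}{n}},
\]
using $n\ge Cd\log n$ to make the Bernstein sub-Gaussian regime dominate. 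The constant depends on $C_\theta$ via $\|\theta^\ast\|/\sigma$ and the $\psi_1$-norms.

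Finally, intersect this event with the event (probability $\ge1-\beta$) that $\widehat\alpha_n(\theta)\in[\alpha_0,1-\alpha_0]$ for all $\theta\in\mathcal T$. On the intersection, evaluating the supremum at $\alpha=\widehat\alpha_n(\theta)$ gives the claim, and no independence between $\widehat\alpha_n$ and the sample is needed because the bound is uniform in $\alpha$. The main obstacle is ensuring the cited uniform bound genuinely covers the weight parameter uniformly, not just a fixed known weight. If it does not, I would carry out the net argument above explicitly in $(\theta,b)$, with the truncation step the most delicate part.
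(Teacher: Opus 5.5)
Your proposal is correct and follows the paper's proof: both invoke Theorem~3 of \cite{weinberger2022algorithm} (after rescaling by $\sigma$) to get a bound holding simultaneously over $\theta\in B(0,2\|\theta^\ast\|)$ and $\alpha\in[\alpha_0,1-\alpha_0]$. Both then intersect that event with the weight-control event and evaluate at $\alpha=\widehat\alpha_n(\theta)$, so no independence between $\widehat\alpha_n$ and the sample is needed. Your fallback net argument in $(\theta,b)$ is a sensible safeguard for the uniformity in $\alpha$, which the paper simply asserts; note, however, that the additive $\frac{\sigma}{2}\log\frac{1-\alpha_0}{\alpha_0}$ term comes from the magnitude bound $|\tanh(s+b)|\le|s|+|b|$, not from the net size, and your cruder $\psi_1$-norm $\sigma+\|\theta^\ast\|$ suffices only because $C$ may depend on $C_\rho$, via $\log\frac{1-\alpha_0}{\alpha_0}\ge 2C_\rho$.
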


\begin{proof}[Proof of \Cref{cor:data_dependent_alpha}]
Apply Theorem~3 of \cite{weinberger2022algorithm} after rescaling
$Y$ and $\theta$ by $\sigma$. Its uniform mean-update bound, together with
$|2\alpha-1|\le\frac12|\log(\alpha/(1-\alpha))|$, gives, with probability
at least $1-n^{-cd}$,
\[
\|F_n(\theta,\alpha)-F(\theta,\alpha)\|
\le C\left(\|\theta\|
 +\frac{\sigma}{2}\left|\log\frac{\alpha}{1-\alpha}\right|\right)
\sqrt{\frac{d\log n}{n}}
\]
simultaneously for $\theta\in B(0,2\|\theta^\ast\|)$ and
$\alpha\in[\alpha_0,1-\alpha_0]$.
Intersect this event with the assumed weight-control event and evaluate the
uniform bound at $\alpha=\widehat\alpha_n(\theta)$ for $\theta\in\mathcal T$.
Since $\|\theta\|\le2\|\theta^\ast\|$ and
$|\log(\alpha/(1-\alpha))|\le\log((1-\alpha_0)/\alpha_0)$ on this interval,
the claim follows by a union bound and an adjustment of $C$.
\end{proof}

\begin{lemma}\label{lem:alpha_n_bound}
Let \(Q^\ast\) be as in \eqref{eq:mixture_of_gaussians} with
\(\alpha=1/2\), and assume \(\|\theta^\ast\|\le C_\theta\). For each
\(\theta\in\mathcal B\), let \(f_n(\theta)\in\mathbb R^2\) be an
optimal semidual potential for \(S(P_\theta,Q_n)\), where
\(P_\theta=\frac12\delta_\theta+\frac12\delta_{-\theta}\), and define
\[
\alpha_n(\theta)
=
\frac{
\frac12\exp(f_{n,1}(\theta)/\sigma^2)
}{
\frac12\exp(f_{n,1}(\theta)/\sigma^2)
+
\frac12\exp(f_{n,2}(\theta)/\sigma^2)
}.
\]
Then, with probability at least \(1-Cn^{-2}\), for every
\(\theta\in\mathcal B\),
\[
\alpha_n(\theta)\in[\alpha_0,1-\alpha_0],
\]
where
\[
\alpha_0
:=
\frac{1}{
1+\exp\left(
2C_0\left[\|\theta^\ast\|^2/\sigma^2+1\right]
\right)
},
\]
and \(C_0>0\) is a constant depending only on the fixed problem
parameters.
\end{lemma}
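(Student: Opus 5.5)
The key observation is that $\alpha_n(\theta)$ depends only on the difference $f_{n,1}(\theta)-f_{n,2}(\theta)$:
\[
\alpha_n(\theta)=\Bigl(1+\exp\bigl((f_{n,2}(\theta)-f_{n,1}(\theta))/\sigma^2\bigr)\Bigr)^{-1}.
\]
It therefore suffices to show that, with probability at least $1-Cn^{-2}$,
\[
|f_{n,1}(\theta)-f_{n,2}(\theta)|\le 2C_0\sigma^2\bigl[\|\theta^\ast\|^2/\sigma^2+1\bigr]
\quad\text{simultaneously for all }\theta\in\mathcal B.
\]
Since $\alpha=1/2$ and $P_\theta$ is kept fixed (a one-sample problem), the gauge gives $f_{n,1}=-f_{n,2}$. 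Hence this reduces to a uniform bound on $\|f_n(\theta)\|_\infty$.

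To get that bound, I will apply Proposition~\ref{prop:potentialbound} to the pair $(P_\theta,Q_n)$. Here $K=2$ and $\underline\alpha=1/2$. The atoms $\pm\theta$ satisfy $\|\theta\|\le \tfrac54\|\theta^\ast\|$. Because $Q_n$ is not centered, I use the centering reduction from Appendix~\ref{app:proofssec3} with $\mu_n=\E_{Q_n}Y$, so the radius becomes $\tilde R_n\le \tfrac54\|\theta^\ast\|+\|\mu_n\|$. Proposition~\ref{prop:potentialbound} then yields the deterministic inequality
\[
\|f_n(\theta)\|_\infty\le \tilde R_n^2+\sigma^2\log 2+\frac{2\hat\varepsilon_n^2}{\sigma^2}\tilde R_n^2,
\]
valid for every $\theta\in\mathcal B$. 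Here $\hat\varepsilon_n^2$ is the subGaussian proxy of the centered empirical measure $Q_n$. The right-hand side does not depend on $\theta$, so uniformity over $\mathcal B$ comes for free and no covering argument is needed.

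It remains to place the two random quantities in a good event of probability $1-Cn^{-2}$. The first is $\|\mu_n\|$. I will not bound it by something like $\sqrt{d/n}$, which would introduce $d$. Instead I note that only the projections $\langle \mu_n,\theta\rangle$ actually enter. In the proof of Proposition~\ref{prop:potentialbound}, $\tilde R$ only appears through $\|x_k\|^2$ and through the moment generating function of $\langle x-x_k,Y\rangle$. So I will redo that proof directly for $Q_n$ without centering. This leaves the term
\[
\sigma^2\log \frac1n\sum_{i}\exp\bigl(\langle \pm 2\theta, Y_i\rangle/\sigma^2\bigr)
\]
to control, uniformly over $\theta\in\mathcal B$.

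The second quantity, and the main obstacle, is exactly this empirical MGF: it must be uniform in $\theta$ and dimension-free. Writing $Y=\pm\theta^\ast+\sigma Z$ with $Z\sim\mathcal N(0,I_d)$, the term splits into a bounded part $e^{O(\|\theta^\ast\|^2/\sigma^2)}$ and the Gaussian part $\exp(2\langle\theta,Z_i\rangle/\sigma)$. The direction $\theta$ varies over a $d$-dimensional ball, and a naive sup over $\theta$ costs a factor of $d$.

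My first attempt is the event used in Lemma~\ref{lemma:sigmaprob}. With $m=4$ it gives probability $1-Cn^{-2}$, and on it $Q_n$ is $12\tilde\varepsilon^2$-subGaussian, where $\tilde\varepsilon^2$ is the population proxy. For this mixture $\tilde\varepsilon^2\lesssim\sigma^2+\|\theta^\ast\|^2$. This immediately gives a uniform bound in all directions with constant $C_0$, and the lemma allows $C_0$ to depend on fixed problem parameters. The catch is that the proof of Lemma~\ref{lemma:sigmaprob} via Lemma~\ref{lemma:MNW} is stated with a $d$-scaled exponent. I will accept that it is dimension-free as stated and track the failure probability $Cn^{-2}$. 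Combining everything gives $|f_{n,1}-f_{n,2}|\le 2C_0(\|\theta^\ast\|^2+\sigma^2)$ on this event, and hence $\alpha_n(\theta)\in[\alpha_0,1-\alpha_0]$ for all $\theta\in\mathcal B$.
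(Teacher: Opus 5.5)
Your argument is the paper's proof: apply Proposition~\ref{prop:potentialbound} to $(P_\theta,Q_n)$ with $\underline\alpha=\tfrac12$ and $\|\theta\|\le\tfrac54\|\theta^\ast\|$. Then restrict to the event $\{\tilde\varepsilon^2<12\varepsilon^2\}$ of Lemma~\ref{lemma:sigmaprob} with $m=4$ to get $\|f_n(\theta)\|_\infty\le L$ uniformly over $\mathcal B$, and conclude via $\alpha_n(\theta)/(1-\alpha_n(\theta))=e^{(f_{n,1}(\theta)-f_{n,2}(\theta))/\sigma^2}$. The paper omits your centering discussion.

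The catch you flag is genuine, and it affects the paper's proof in the same way. Lemma~\ref{lemma:MNW} is about the norm-based condition $\frac1n\sum_i e^{\|Y_i\|^2/(2d\tilde\varepsilon^2)}\le 2$, which bounds directional moment generating functions only with proxy $d\tilde\varepsilon^2$. Indeed, testing $v\propto Y_1-\bar Y$, with $\bar Y:=\frac1n\sum_iY_i$, shows that any directional proxy of the centered $Q_n$ is at least $\|Y_1-\bar Y\|^2/(2\log n)$, typically of order $d\sigma^2/\log n$. So this route only yields a $C_0$ growing linearly in $d$. The lemma's wording may tolerate that, but its downstream use, where $\alpha_0$ is treated as dimension-free, does not. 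A dimension-free $\alpha_0$ can instead be obtained under $n\gtrsim d$, as assumed downstream. The ingredients are the calibration identity $G_n(\theta,\alpha_n(\theta))=\tfrac12$ from the proof of Proposition~\ref{prop:uniform-centered-sinkhorn-update}, monotonicity of $G_n(\theta,\cdot)$, $G(\theta,\tfrac12)=\tfrac12$, and uniform concentration of $G_n$ around $G$.
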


\begin{proof}[Proof of \Cref{lem:alpha_n_bound}]
Let \(\tilde\varepsilon\) be the smallest constant such that \(Q_n\) and
\(Q^\ast\) are uniformly subGaussian. By Lemma~\ref{lemma:MNW}, this is finite
almost surely. Applying Proposition~\ref{prop:potentialbound} to the problem
\((P_\theta,Q_n)\), and using that \(P_\theta\) has two atoms with weights
\(1/2\), gives
\[
\|f_n(\theta)\|_\infty
\lesssim
\|\theta\|^2
+
\sigma^2\log 2
+
\frac{\tilde\varepsilon^2}{\sigma^2}\|\theta\|^2 .
\]
Define the event
\[
E_n:=\{\tilde\varepsilon^2<12\varepsilon^2\}.
\]
By Lemma~\ref{lemma:sigmaprob} with \(m=4\),
\[
\mathbb P(E_n^c)\lesssim n^{-2}.
\]
On \(E_n\), uniformly over \(\theta\in\mathcal B\),
\[
\|f_n(\theta)\|_\infty
\le
C_0\bigl(\|\theta^\ast\|^2+\sigma^2\bigr)
=:L,
\]
for some constant \(C_0>0\) depending only on the fixed problem parameters,
where we used
\[
\|\theta\|
\le
\|\theta^\ast\|+\|\theta-\theta^\ast\|
\le
\frac54\|\theta^\ast\|.
\]

Since \(\alpha=1/2\), the Sinkhorn weight satisfies
\[
\frac{\alpha_n(\theta)}{1-\alpha_n(\theta)}
=
\exp\left(
\frac{f_{n,1}(\theta)-f_{n,2}(\theta)}{\sigma^2}
\right).
\]
Therefore, on \(E_n\),
\[
e^{-2L/\sigma^2}
\le
\frac{\alpha_n(\theta)}{1-\alpha_n(\theta)}
\le
e^{2L/\sigma^2}
\qquad
\text{for every } \theta\in\mathcal B.
\]
It follows that
\[
\alpha_n(\theta)\in[\alpha_0,1-\alpha_0]
\qquad
\text{for every } \theta\in\mathcal B,
\]
where
\[
\alpha_0
:=
\frac{1}{1+\exp(2L/\sigma^2)}
=
\frac{1}{
1+\exp\left(
2C_0\left[\|\theta^\ast\|^2/\sigma^2+1\right]
\right)
}.
\]
\end{proof}

The bound of the second term in the decomposition in the proof of \Cref{prop:sample-based-sem-iterates} is given below. 

\begin{proposition}[Uniform concentration of the Sinkhorn-corrected population update]
\label{prop:uniform-centered-sinkhorn-update}
Assume \eqref{eq:mixture_of_gaussians} holds with $\alpha=1/2$ and
$\theta^\ast\neq 0$. For each $\theta\in\B$, let $\alpha_n(\theta)$ be
the empirical Sinkhorn-corrected weight, and let $\alpha_0$ be as in
\Cref{lem:alpha_n_bound}. There exists a constant $C>0$, depending only
on $\alpha_0$, such that for every $\delta\in(0,1)$, with probability at least
$1-\delta-Cn^{-2}$,
\[
\sup_{\theta\in\B}
\left\|
F(\theta,\alpha_n(\theta))
-
\E_{Y_1,\ldots,Y_n}
\bigl[F(\theta,\alpha_n(\theta))\bigr]
\right\|
\le
C
\max\left\{\sigma,\|\theta^\ast\|\right\}
\exp\left(C\frac{\|\theta^\ast\|^2}{\sigma^2}\right)
\sqrt{
\frac{d\log(en)+\log(1/\delta)}{n}
}.
\]
\end{proposition}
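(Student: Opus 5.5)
The plan is to reduce the uniform centered deviation to a bounded-differences (McDiarmid) argument in the data $\mathcal D_n$, combined with a covering of $\B$. The key structural fact is that the data enter $F(\theta,\alpha_n(\theta))$ only through the scalar weight $\alpha_n(\theta)$. Write $\lambda=\log(\alpha/(1-\alpha))$. The map $\lambda\mapsto F(\theta,\alpha)$ is Lipschitz: $\partial_\lambda(2\Psi-1)=2\Psi(1-\Psi)\le 1/2$, so
\[
\|F(\theta,\alpha)-F(\theta,\alpha')\|\le \tfrac12\E_{Q^\ast}\|Y\|\,|\lambda-\lambda'| .
\]
For fixed $\alpha$, $F$ is also Lipschitz in $\theta$, with constant of order $\E\|Y\|^2/\sigma^2$. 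Moreover, for $\alpha=1/2$ we have $\lambda_n(\theta)=(f_{n,1}(\theta)-f_{n,2}(\theta))/\sigma^2$. It therefore suffices to control the fluctuations of the potential difference $f_{n,1}(\theta)-f_{n,2}(\theta)$.

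\textbf{Step 1: bounded differences on a good event.} Work on the event $E_n=\{\tilde\varepsilon^2<12\varepsilon^2\}$, which has probability at least $1-Cn^{-2}$ by Lemma~\ref{lemma:sigmaprob}. On $E_n$, Proposition~\ref{prop:potentialbound} gives $\|f_n(\theta)\|_\infty\le L$ uniformly on $\B$, so Proposition~\ref{prop:strong} provides strong concavity with $\kappa\gtrsim\exp(-C\|\theta^\ast\|^2/\sigma^2)$. Replace one sample $Y_i$ by $Y_i'$. The semidual $\Phi_n$ changes by $\frac1n(\Gamma_f(Y_i')-\Gamma_f(Y_i))$, and the difference of $\Gamma_f$ at two points is controlled by $L(y)=|\langle 2\theta,y\rangle|$ up to constants. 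Combining this with Corollary~\ref{cor:strong}, argued as in Lemma~\ref{lemma:reversevar} with the perturbation now in $Q_n$, gives
\[
\Var_\infty(f_n-f_n')\lesssim \kappa^{-2}n^{-2}\bigl(\|\theta\|(\|Y_i\|+\|Y_i'\|)\bigr)^2 .
\]
Hence $|\lambda_n-\lambda_n'|\lesssim \kappa^{-1}\|\theta\|(\|Y_i\|+\|Y_i'\|)/(n\sigma^2)$. Since this difference is unbounded, I would truncate: on the event $\max_i\|Y_i\|\le \|\theta^\ast\|+\sigma\sqrt{Cd\log n}$, which fails with probability $O(n^{-2})$, McDiarmid applies to the truncated function. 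A cleaner alternative is a bounded-differences inequality for subGaussian differences, since the differences are linear in the subGaussian $Y_i$. Either route gives, for fixed $\theta$, a deviation of order $\kappa^{-1}\max\{\sigma,\|\theta^\ast\|\}\sqrt{\log(1/\delta)/n}$ for $\lambda_n(\theta)$, and hence for $F(\theta,\alpha_n(\theta))$. The $\sqrt d$ coming from $\E\|Y\|$ must be handled carefully: I would project onto a fixed unit vector $u$, prove the bound for $\langle u,\cdot\rangle$, and pay with a $1/2$-net of the sphere. This produces the $d$ inside the logarithm rather than a multiplicative factor.

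\textbf{Step 2: uniformity over $\B$.} Take an $\eta$-net of $\B$ of size $(C\|\theta^\ast\|/\eta)^d$ with $\eta=1/n$, and union bound. This yields the $d\log(en)$ term. Transferring from the net to all of $\B$ requires Lipschitz continuity of $\theta\mapsto\lambda_n(\theta)$ and of $\theta\mapsto\E\lambda_n(\theta)$. For this I would use the implicit function theorem on $\nabla\Phi_n=0$ restricted to $\mathbf 1^\perp$. Proposition~\ref{prop:strong} bounds the inverse Hessian by $\kappa^{-1}$, and the mixed derivative $\partial_\theta\nabla\Phi_n$ is bounded by $\frac1n\sum_i\|Y_i\|/\sigma^2\lesssim\sqrt d$ on $E_n$. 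The resulting polynomial Lipschitz constant costs only a $\log n$ in the net.

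\textbf{Main obstacle.} I expect the bounded-difference step to be the hard part. Strong concavity only holds on the good event, and the per-sample influence is unbounded, so the function must be modified off $E_n$ (e.g.\ by clipping $\lambda_n$ to $[\log\frac{\alpha_0}{1-\alpha_0},\log\frac{1-\alpha_0}{\alpha_0}]$ as allowed by Lemma~\ref{lem:alpha_n_bound}). The expectation of the clipped version must then be compared with $\E[F(\theta,\alpha_n(\theta))]$. That discrepancy is $O(n^{-2})$ times a polynomial moment, which is negligible. The clipping also supplies the $\exp(C\|\theta^\ast\|^2/\sigma^2)$ factor via $\kappa^{-1}$ and $\alpha_0$.
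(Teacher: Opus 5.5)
Your route is a legitimate alternative in principle: stability of the empirical semidual optimizer, plus McDiarmid, plus nets. As written, though, two steps fail.

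\textbf{First gap: the sensitivity bound carries $\sqrt d$.} Your bound on $|\lambda_n-\lambda_n'|$ is proportional to $\|\theta\|(\|Y_i\|+\|Y_i'\|)$, and $\|Y_i\|\asymp\sigma\sqrt d$. After truncating at $\|\theta^\ast\|+\sigma\sqrt{Cd\log n}$, McDiarmid gives a fixed-$\theta$ deviation with an extra multiplicative factor $\sqrt{d\log n}$, not the dimension-free one you assert. The subGaussian-differences variant has the same defect, because $\|Y_i-Y_i'\|$ has subGaussian diameter $\asymp\sigma\sqrt d$. Projecting onto a unit vector $u$ removes only the $\sqrt d$ hidden in $\E\|Y\|$, i.e.\ in the Lipschitz constant of $\lambda\mapsto F(\theta,\alpha)$, not this one. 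After the union bound you therefore end a factor $\sqrt{d\log n}$ above the claimed rate.

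The fix is to bound the second difference rather than $\Gamma_f(Y_i')-\Gamma_f(Y_i)$. Log-sum-exp is monotone and shift-equivariant, so $\Gamma_f-\Gamma_{f'}$ takes values in $[\min_k(f_k-f'_k),\max_k(f_k-f'_k)]$. Hence, for arbitrary $Y_i,Y_i'$,
\[
\bigl|(\Gamma_{f_n}-\Gamma_{f_n'})(Y_i')-(\Gamma_{f_n}-\Gamma_{f_n'})(Y_i)\bigr|\le 2\sqrt{\Var_\infty(f_n-f_n')}.
\]
Corollary~\ref{cor:strong}, used exactly as you propose on configurations where your $L$ and $\kappa$ are available, then gives $\sqrt{\Var_\infty(f_n-f_n')}\le 4/(\kappa n)$. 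Hence $|\lambda_n-\lambda_n'|\le 8/(\kappa n\sigma^2)$, which is bounded and dimension-free and needs no truncation. The sphere net also becomes unnecessary, since $\|F(\theta,\alpha)-F(\theta,\alpha')\|\le\tfrac12\sup_{\|u\|=1}\E|\langle u,Y\rangle|\,|\lambda-\lambda'|\le\tfrac12(\|\theta^\ast\|+\sigma)|\lambda-\lambda'|$. Concentrating the scalar $\lambda_n(\theta)$ and comparing with an independent copy therefore suffices.

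\textbf{Second gap: clipping does not handle the bad event.} McDiarmid needs the one-coordinate bound for every pair of neighbouring configurations. Clipping $\lambda_n$ to $[\log\frac{\alpha_0}{1-\alpha_0},\log\frac{1-\alpha_0}{\alpha_0}]$ controls its range, not its sensitivity. For neighbouring configurations outside the good set, where there is no uniform $L$ and no curvature $\kappa$, the clipped value can still move by the full width of that interval. Applying McDiarmid ``on the truncation event'' has the same problem, since an event is not a product space.

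What works is a Lipschitz (McShane) extension. Let $\mathcal G$ be the set of configurations whose empirical measure is $12\varepsilon^2$-subGaussian; it has probability at least $1-Cn^{-2}$ by Lemma~\ref{lemma:sigmaprob}. The comparison above applies directly to any $y,z\in\mathcal G$ at Hamming distance $k$ and gives $|\lambda_n(y)-\lambda_n(z)|\le 8k/(\kappa n\sigma^2)$, with no chaining through intermediate points. So $\tilde\lambda(y):=\inf_{z\in\mathcal G}\{\lambda_n(z)+\frac{8}{\kappa n\sigma^2}d_H(y,z)\}$ coincides with $\lambda_n$ on $\mathcal G$ and has bounded differences everywhere. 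McDiarmid, your implicit-function $\theta$-net, and the $O(n^{-2})$ comparison of expectations then go through.

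\textbf{Comparison with the paper.} The paper avoids empirical curvature altogether.
\begin{itemize}
\item $\alpha_n(\theta)$ is an exact root of $G_n(\theta,\alpha)=\tfrac12$, where $G_n$ averages $[0,1]$-valued logistic functions.
\item Root stability comes from the population slope $\lambda_\B=\inf\partial_\alpha G$.
\item $\sup_{\theta,\alpha}|G_n-G|$ is a VC-subgraph supremum, so uniformity in $\theta$ comes for free and McDiarmid applies verbatim with $c_i=1/n$.
\item Centering uses a ghost sample and Jensen, and the bad event enters only through $\alpha_n(\theta)\in[\alpha_0,1-\alpha_0]$.
\end{itemize}
Your repaired argument is closer to an algorithmic-stability proof that reuses Corollary~\ref{cor:strong}. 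It pays for this with the extension step and an explicit $\theta$-net.
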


\begin{proof}
The proof begins with a ghost-sample symmetrization. To control the
deviation of $F(\theta,\alpha_n(\theta))$ from its expectation, we introduce
a weight $\alpha_n'(\theta)$ estimated from an independent sample.
Averaging $F(\theta,\alpha_n'(\theta))$ over that sample recovers the
expectation we want to subtract, so Jensen's inequality reduces the
problem to comparing $F(\theta,\alpha_n(\theta))$ and
$F(\theta,\alpha_n'(\theta))$.
Both weights solve empirical versions of the same population calibration
equation. We show that this equation is stable: small sampling errors
lead to small changes in its solution, following the basic principle
of $Z$-estimation
\cite[Chapter~5, Sections~5.2--5.3]{van2000asymptotic}.
We then show that these small changes in the weight produce small changes
in $F$, giving the desired bound after averaging over the independent
sample. All bounds below are uniform over $\theta\in\mathcal B$.

\paragraph{Step 1: The Sinkhorn calibration equation.}
Recall that
\[
\Psi(y,\theta,\alpha)
:=
\frac{
\alpha \exp\!\left(-\frac{\|y-\theta\|^2}{2\sigma^2}\right)
}{
\alpha \exp\!\left(-\frac{\|y-\theta\|^2}{2\sigma^2}\right)
+
(1-\alpha)\exp\!\left(-\frac{\|y+\theta\|^2}{2\sigma^2}\right)
}\]
Here $\alpha$ is a variable responsibility weight; the true mixture weight remains fixed at $1/2$. Define
\[
G(\theta,\alpha)
:=
\E_{Y\sim Q^\ast}\Psi(Y,\theta,\alpha),
\qquad
G_n(\theta,\alpha)
:=
\frac1n\sum_{i=1}^n\Psi(Y_i,\theta,\alpha).
\]
\[
Z_n:=\sup_{\theta\in\B}\sup_{\alpha\in I}
|G_n(\theta,\alpha)-G(\theta,\alpha)|,
\]
where $I:=[\alpha_0,1-\alpha_0]$. Let $g_n^\theta$ be the continuous-side potential paired with $f_n(\theta)$.
The first marginal equation \eqref{eqn:fstar_from_gstar}, applied to
$(P_\theta,Q_n)$ at $x=\theta$, reads
\[
1=\frac1n\sum_{i=1}^n
\exp\left(\frac{f_{n,1}(\theta)+g_n^\theta(Y_i)-c(\theta,Y_i)}{\sigma^2}\right).
\]
Substituting the empirical version of \eqref{eqn:gstar_from_fstar} into
this identity and using the definition of $\alpha_n(\theta)$ gives
\begin{equation}\label{eq:sem-empirical-calibration}
G_n(\theta,\alpha_n(\theta))=\frac12.
\end{equation}
Indeed, each exponential in the preceding sum equals
$2\Psi(Y_i,\theta,\alpha_n(\theta))$ because the discrete marginal has
weights $1/2,1/2$. The identical calculation applies to an independent
sample. Thus $\alpha_n(\theta)$ is an exact root of
$G_n(\theta,\alpha)-1/2=0$.

\paragraph{Step 2: Uniform control of the calibration map.} Let $\mathcal G
:=
\left\{
y\mapsto\Psi(y,\theta,\alpha):
\theta\in\B,\ \alpha\in I
\right\}$. Since
\[
\Psi(y,\theta,\alpha)
=
\ell\left(
\log\frac{\alpha}{1-\alpha}
+
\frac{2\langle y,\theta\rangle}{\sigma^2}
\right),
\qquad
\ell(t):=\frac{1}{1+e^{-t}},
\]
the inner class is an affine class in $y$ with parameters
$(\theta,\alpha)$ of dimension $O(d)$. More explicitly, for
$u\in(0,1)$, the subgraph condition
$u\le \Psi(y,\theta,\alpha)$ is equivalent to
\[
\log\frac{u}{1-u}
\le
\log\frac{\alpha}{1-\alpha}
+
\frac{2\langle y,\theta\rangle}{\sigma^2},
\]
which is a halfspace condition after the fixed transformation
\[
(y,u)
\longmapsto
\left(y,\log\frac{u}{1-u}\right).
\]
Hence $\mathcal G$ is a bounded VC-subgraph class with VC-subgraph
dimension $O(d)$; see, e.g.,
\cite[Lemmata~2.6.15 and~2.6.18]{wellner2013weak}.
The VC entropy bound
\citep[Theorem~2.6.7]{wellner2013weak}, together with symmetrization
and the entropy-integral bound for empirical processes, yields
\[
\E Z_n
\le
C\sqrt{\frac{d\log(en)}{n}}.
\]
Moreover, replacing one observation changes $Z_n$ by at most $1/n$,
since every function in $\mathcal G$ takes values in $[0,1]$.
Therefore, McDiarmid's inequality implies that, with probability at
least $1-\delta$,
\[
Z_n
\le
\E Z_n+\sqrt{\frac{\log(1/\delta)}{2n}}
\le
C\sqrt{\frac{d\log(en)+\log(1/\delta)}{n}}.
\]

\paragraph{Step 3: Population slope and update bounds.} Define
\[
\lambda_\B:=\inf_{\theta\in\B}\inf_{\alpha\in I}\partial_\alpha G(\theta,\alpha),
\qquad
L_\B:=\sup_{\theta\in\B}\sup_{\alpha\in I}\|\partial_\alpha F(\theta,\alpha)\|,
\]
and
\[
M_\B:=\sup_{\theta\in\B}\sup_{\alpha,\alpha'\in(0,1)}
\|F(\theta,\alpha)-F(\theta,\alpha')\|.
\]
We show that $\lambda_\B>0$ and bound $L_\B/\lambda_\B$ and $M_\B$
independently of $d$. A direct calculation gives
\[
\partial_\alpha\Psi(y,\theta,\alpha)
=
\frac{
\Psi(y,\theta,\alpha)
\bigl(1-\Psi(y,\theta,\alpha)\bigr)
}{
\alpha(1-\alpha)
},
\]
and hence
\[
\partial_\alpha G(\theta,\alpha)
=
\E\left[
\frac{
\Psi(Y,\theta,\alpha)
\bigl(1-\Psi(Y,\theta,\alpha)\bigr)
}{
\alpha(1-\alpha)
}
\right].
\]
For
\[
s(y,\theta,\alpha)
:=
\log\frac{\alpha}{1-\alpha}
+
\frac{2\langle y,\theta\rangle}{\sigma^2},
\]
we have
\[
\Psi(y,\theta,\alpha)\bigl(1-\Psi(y,\theta,\alpha)\bigr)
=
\frac{1}{2+e^{s(y,\theta,\alpha)}+e^{-s(y,\theta,\alpha)}}.
\]
For $\alpha\in I$,
\[
\left|\log\frac{\alpha}{1-\alpha}\right|
\le
\log\frac{1-\alpha_0}{\alpha_0}.
\]
Consequently, on the event
$|\langle Y,\theta\rangle|\le 2\|\theta^\ast\|^2$,
\begin{align*}
\Psi(Y,\theta,\alpha)\bigl(1-\Psi(Y,\theta,\alpha)\bigr)
&=
\frac{1}{2+\exp(s(Y,\theta,\alpha))+\exp(-s(Y,\theta,\alpha))} \\
&\ge
\frac14\exp\bigl(-|s(Y,\theta,\alpha)|\bigr) \\
&\ge
\frac14
\exp\left(
-\left|\log\frac{\alpha}{1-\alpha}\right|
-\frac{2|\langle Y,\theta\rangle|}{\sigma^2}
\right) \\
&\ge
\frac14
\exp\left(
-\log\frac{1-\alpha_0}{\alpha_0}
-4\frac{\|\theta^\ast\|^2}{\sigma^2}
\right) \\
&=
\frac{\alpha_0}{4(1-\alpha_0)}
\exp\left(
-4\frac{\|\theta^\ast\|^2}{\sigma^2}
\right) \\
&\ge
c\exp\left(
-C\frac{\|\theta^\ast\|^2}{\sigma^2}
\right),
\end{align*}
where $c,C>0$ depend only on $\alpha_0$. It follows that
\[
\partial_\alpha G(\theta,\alpha)
\ge
c\exp\left(
-C\frac{\|\theta^\ast\|^2}{\sigma^2}
\right)
\Pr\left(
|\langle Y,\theta\rangle|
\le 2\|\theta^\ast\|^2
\right).
\]

Write $Y=S\theta^\ast+\sigma Z$, where $S$ is Rademacher and $Z\sim N(0,I_d)$ is independent of $S$.
For fixed $\theta$,
\[
\langle Y,\theta\rangle
\sim
\frac12
N\left(
\langle\theta^\ast,\theta\rangle,
\sigma^2\|\theta\|^2
\right)
+
\frac12
N\left(
-\langle\theta^\ast,\theta\rangle,
\sigma^2\|\theta\|^2
\right).
\]
By symmetry, if $X_\theta
\sim
N\left(
\langle\theta^\ast,\theta\rangle,
\sigma^2\|\theta\|^2
\right)$, then
\[
\Pr\left(
|\langle Y,\theta\rangle|
\le 2\|\theta^\ast\|^2
\right)
=
\Pr\left(
|X_\theta|
\le 2\|\theta^\ast\|^2
\right).
\]
Moreover,
\[
\Pr\left(
|X_\theta|
\le 2\|\theta^\ast\|^2
\right)
\ge
\Pr\left(
0\le X_\theta\le 2\|\theta^\ast\|^2
\right).
\]
Since $\theta\in\B$,
\[
\|\theta\|
\le \frac54\|\theta^\ast\|,
\qquad
\frac34\|\theta^\ast\|^2
\le
\langle\theta^\ast,\theta\rangle
\le
\frac54\|\theta^\ast\|^2.
\]
Therefore,
\[
\Pr\left(
|\langle Y,\theta\rangle|
\le 2\|\theta^\ast\|^2
\right)
\ge
c\min\left\{
\frac{\|\theta^\ast\|}{\sigma},1
\right\},
\]
which follows from the following inequality
\[
\Phi_{\rm std}(x)-\Phi_{\rm std}(-x)
\ge c\min\{x,1\},
\qquad x\ge 0,
\]
where $\Phi_{\rm std}(\cdot)$ denotes the standard normal cumulative distribution function. Thus, we obtain
\[
\lambda_\B
\ge
c
\min\left\{
\frac{\|\theta^\ast\|}{\sigma},1
\right\}
\exp\left(
-C\frac{\|\theta^\ast\|^2}{\sigma^2}
\right).
\]

Next, we upper bound $L_\B$. For
$\theta\in\B$ and $\alpha\in I$, let
\[
h_\alpha(t)
:=
\partial_\alpha
\ell\left(
\log\frac{\alpha}{1-\alpha}
+
\frac{2t}{\sigma^2}
\right).
\]
Because $\alpha$ is bounded away from zero and one, and the first two derivatives of the logistic function are uniformly bounded,
\[
\|h_\alpha\|_\infty\le C,
\qquad
\|h_\alpha'\|_\infty\le \frac{C}{\sigma^2}.
\]
Consequently, dominated convergence implies
\[
\partial_\alpha F(\theta,\alpha)
=
2\E\left[
Yh_\alpha(\langle Y,\theta\rangle)
\right].
\]
Using $Y=S\theta^\ast+\sigma Z$, the signal contribution satisfies
\[
\left\|
\E\left[
S\theta^\ast
h_\alpha\left(
S\langle\theta^\ast,\theta\rangle
+
\sigma\langle Z,\theta\rangle
\right)
\right]
\right\|
\le
C\|\theta^\ast\|.
\]
For the Gaussian contribution, conditioning on $S$ and applying
Stein's lemma gives
\[
\E_Z\left[
Z
h_\alpha\left(
S\langle\theta^\ast,\theta\rangle
+
\sigma\langle Z,\theta\rangle
\right)
\right]
=
\sigma\theta\,
\E_Z\left[
h_\alpha'\left(
S\langle\theta^\ast,\theta\rangle
+
\sigma\langle Z,\theta\rangle
\right)
\right].
\]
Therefore,
\[
\left\|
\sigma\E\left[
Z
h_\alpha\left(
S\langle\theta^\ast,\theta\rangle
+
\sigma\langle Z,\theta\rangle
\right)
\right]
\right\|
\le
C\|\theta\|.
\]
Since $\theta\in\B$ implies
$\|\theta\|\le 5\|\theta^\ast\|/4$, we conclude that
\[
L_\B
\le
C\|\theta^\ast\|.
\]
Combining this bound with
\[
\lambda_\B
\ge
c
\min\left\{
\frac{\|\theta^\ast\|}{\sigma},1
\right\}
\exp\left(
-C\frac{\|\theta^\ast\|^2}{\sigma^2}
\right),
\]
we obtain
\[
\frac{L_\B}{\lambda_\B}
\le
C
\max\left\{
\sigma,\|\theta^\ast\|
\right\}
\exp\left(
C\frac{\|\theta^\ast\|^2}{\sigma^2}
\right).
\]

Finally, write
\[
F(\theta,\alpha)
=
\E\left[
Yq_\alpha(\langle Y,\theta\rangle)
\right],
\]
where
\[
q_\alpha(t)
:=
2\ell\left(
\log\frac{\alpha}{1-\alpha}
+
\frac{2t}{\sigma^2}
\right)-1.
\]
Uniformly over $\alpha\in(0,1)$,
\[
\|q_\alpha\|_\infty\le 1,
\qquad
\|q_\alpha'\|_\infty\le \frac{C}{\sigma^2}.
\]
The signal component of $F(\theta,\alpha)$ is therefore bounded by
$\|\theta^\ast\|$, while Stein's lemma bounds the Gaussian component
by $C\|\theta\|$. Since $\theta\in\B$, this gives
\[
\sup_{\theta\in\B}
\sup_{\alpha\in(0,1)}
\|F(\theta,\alpha)\|
\le
C\|\theta^\ast\|,
\]
which implies that
\[M_\B \leq 2C\|\theta^\ast\|.\]

\paragraph{Step 4: Stability of the empirical roots.}
Introduce a ghost sample $Y_1',\ldots,Y_n'\stackrel{\mathrm{i.i.d.}}{\sim}Q^\ast$,
independent of $Y_1,\ldots,Y_n$. Let $G_n'$, $\alpha_n'$, and $Z_n'$
denote the corresponding empirical calibration map, Sinkhorn-corrected
weight, and empirical-process deviation. Let $\E'$ denote expectation
only with respect to the ghost sample. Define
\[
H_n(\theta):=F(\theta,\alpha_n(\theta)),
\qquad
H_n'(\theta):=F(\theta,\alpha_n'(\theta)).
\]
Let $\mathcal E_\alpha$ and $\mathcal E_\alpha'$ be the events from
\Cref{lem:alpha_n_bound} for the original and ghost samples,
respectively. On
$\mathcal E_\alpha\cap\mathcal E_\alpha'$, both
$\alpha_n(\theta)$ and $\alpha_n'(\theta)$ belong to $I$ for every
$\theta\in\B$. By the mean value theorem,
\[
\lambda_\B
|\alpha_n(\theta)-\alpha_n'(\theta)|
\le
|G(\theta,\alpha_n(\theta))
-G(\theta,\alpha_n'(\theta))|,
\]
by the definition of $\lambda_\B$.
Using \eqref{eq:sem-empirical-calibration} for the two samples, we obtain
\[
\begin{aligned}
|G(\theta,\alpha_n(\theta))
-G(\theta,\alpha_n'(\theta))| &\leq |G(\theta,\alpha_n(\theta))
-G_n(\theta,\alpha_n(\theta))| + |G_n'(\theta,\alpha_n'(\theta))
-G(\theta,\alpha_n'(\theta))|\\ 
&\leq
Z_n+Z_n'.
\end{aligned}
\]
Consequently,
\[
\sup_{\theta\in\B}
|\alpha_n(\theta)-\alpha_n'(\theta)|
\le
\frac{Z_n+Z_n'}{\lambda_\B},
\]
and therefore
\[
\sup_{\theta\in\B}
\|H_n(\theta)-H_n'(\theta)\|
\le
\frac{L_\B}{\lambda_\B}
\left(Z_n+Z_n'\right)
\]
on $\mathcal E_\alpha\cap\mathcal E_\alpha'$. 
\paragraph{Step 5: Ghost-sample centering.}
Since $H_n'$ has the same distribution as $H_n$, $\E' H_n'(\theta)=\E H_n(\theta)$. Furthermore, for each fixed realization of the original sample, $H_n(\theta)$ is constant with respect to $\E'$. Hence $H_n(\theta)-\E H_n(\theta)=\E'[H_n(\theta)-H_n'(\theta)]$, and Jensen's inequality gives
\[
\begin{aligned}
\sup_{\theta\in\B}
\|H_n(\theta)-\E H_n(\theta)\|
&=
\sup_{\theta\in\B}
\left\|
\E'\left[
H_n(\theta)-H_n'(\theta)
\right]
\right\| \\
&\le
\E'
\sup_{\theta\in\B}
\|H_n(\theta)-H_n'(\theta)\|.
\end{aligned}
\]

Now work on the event $\mathcal E_\alpha$ for the original sample.
Splitting according to the ghost event $\mathcal E_\alpha'$ yields
\[
\begin{aligned}
\E'
\sup_{\theta\in\B}
\|H_n(\theta)-H_n'(\theta)\|
&\le
\frac{L_\B}{\lambda_\B}
\E'\left[
\left(Z_n+Z_n'\right)
\mathbf 1_{\mathcal E_\alpha'}
\right]
+
M_\B\Pr'((\mathcal E_\alpha')^c) \\
&\le
\frac{L_\B}{\lambda_\B}
\left(Z_n+\E Z_n\right)
+
M_\B\Pr(\mathcal E_\alpha^c),
\end{aligned}
\]
By \Cref{lem:alpha_n_bound},
\[
\Pr(\mathcal E_\alpha^c)\le Cn^{-2}.
\]
It follows that, on $\mathcal E_\alpha$,
\[
\sup_{\theta\in\B}
\|H_n(\theta)-\E H_n(\theta)\|
\le
\frac{L_\B}{\lambda_\B}
\left(Z_n+\E Z_n\right)
+
CM_\B n^{-2}.
\]

By Step~2, the event
$Z_n
\le
C\sqrt{\frac{d\log(en)+\log(1/\delta)}{n}}$ has
probability at least $1-\delta$. On its intersection with $\mathcal E_\alpha$ we have
\[
\sup_{\theta\in\B}
\|H_n(\theta)-\E H_n(\theta)\|
\le
\frac{L_\B}{\lambda_\B}
\left(
C\sqrt{\frac{d\log(en)+\log(1/\delta)}{n}}
+
C\sqrt{\frac{d\log(en)}{n}}
\right)
+
CM_\B n^{-2}.
\]
The bounds from Step~3 absorb the $n^{-2}$ remainder into the displayed $n^{-1/2}$ scale.

We conclude that with probability at least
$1-\delta-Cn^{-2}$,
\[
\sup_{\theta\in\B}
\left\|
F(\theta,\alpha_n(\theta))
-
\E F(\theta,\alpha_n(\theta))
\right\|
\le
C
\max\left\{
\sigma,\|\theta^\ast\|
\right\}
\exp\left(
C\frac{\|\theta^\ast\|^2}{\sigma^2}
\right)
\sqrt{
\frac{d\log(en)+\log(1/\delta)}{n}
}.
\]
\end{proof}
\end{document}